\subjclass[2020]{16G20, 55N31} 
\theoremstyle{plain}
\newtheorem{theorem}{Theorem}[section]
\newtheorem{lemma}[theorem]{Lemma}
\newtheorem{proposition}[theorem]{Proposition}
\newtheorem{corollary}[theorem]{Corollary}
\newtheorem{resultx}{Result}
\theoremstyle{definition}
\newtheorem{definition}[theorem]{Definition}
\newtheorem{observation}[theorem]{Observation}
\newtheorem*{question*}{Question}
\newtheorem{question}[theorem]{Question}
\theoremstyle{remark}
\newtheorem{example}[theorem]{Example}
\newtheorem{remark}[theorem]{Remark}
\newcommand{\funct}[1]{\operatorname{\mathtt{#1}}}
\DeclareMathOperator{\Image}{Im} \renewcommand{\Im}{\Image}
\DeclareMathOperator{\End}{End}
\DeclareMathOperator{\Hom}{Hom}
\DeclareMathOperator{\Endo}{End} \renewcommand{\End}{\Endo}
\newcommand{\iso}{\cong}
\newcommand{\Top}{\mathbf{Top}}
\newcommand{\Set}{\mathbf{Set}}
\newcommand{\set}{\mathbf{set}}
\newcommand{\pSet}{\mathbf{Set}^\mathrm{pt}}
\newcommand{\pset}{\mathbf{set}^\mathrm{pt}}
\newcommand{\Vect}{\mathbf{Vect}}
\newcommand{\vect}{\mathbf{vect}}
\newcommand{\VectF}{\mathbf{Vect}_F}
\newcommand{\vectF}{\mathbf{vect}_F}
\newcommand{\Z}{\mathbb{Z}}
\DeclareMathOperator{\rep}{rep}
\DeclareMathSymbol{\mhyphen}{\mathord}{AMSa}{"39}
\title{On the additive image of 0th persistent homology}
\author{Ulrich Bauer}
\address{Department of Mathematics, Technical University of Munich, Garching bei München, Germany}
\email{mail@ulrich-bauer.org}
\author{Magnus B.\ Botnan}
\address{Department of Mathematics, Vrije Universiteit Amsterdam, Amsterdam, The Netherlands}
\email{m.b.botnan@vu.nl}
\author{Steffen Oppermann}
\address{Department of Mathematical Sciences, Norwegian University of Science and Technology, Trondheim, Norway}
\email{steffen.oppermann@ntnu.no}
\author{Johan Steen}
\address{Saddle AS, Stavanger, Norway}
\email{johan.steen.jr@gmail.com}
\begin{document}
\begin{abstract}
For a category $X$ and a finite field $F$, we study the additive image of the functor $\operatorname{H}_0(-;F)_* \colon \rep(X, \Top) \to \rep(X, \VectF)$,
or equivalently, of the free functor $\rep(X, \Set) \to \rep(X, \VectF)$.

We characterize all finite categories \( X \) for which the indecomposables in the additive image coincide with the indecomposable indicator representations and provide examples of quivers of wild representation type where the additive image contains only finitely many indecomposables. Motivated by questions in topological data analysis, we conduct a detailed analysis of the additive image for finite grids. In particular, we show that for grids of infinite representation type, there exist infinitely many indecomposables both within and outside the additive image.

We develop an algorithm for determining if a representation of a finite category is in the additive image. In addition, we investigate conditions for realizability and the effect of modifications of the source category and the underlying field. 

The paper concludes with a discussion of the additive image of $\operatorname{H}_n(-;F)_*$ for an arbitrary field $F$, extending previous work for prime fields.

\end{abstract}
\maketitle

\section{Introduction}
Topological data analysis (TDA) is a branch of data science which applies topology to study the shape of data, i.e., the coarse-scale, global, non-linear geometric features of data. The most notable technique in TDA is persistent homology, and the  pipeline for persistent homology can generally be divided into three steps:
\begin{enumerate}
\item Construct a commutative diagram of topological spaces and inclusions from the data. 
\item Apply homology with field coefficients to obtain a linear representation, i.e., a commutative diagram of vector spaces. 
\item Compute descriptors from the representation in (2) for the use in data analysis.
\end{enumerate}
In the simplest setting, one considers a finite filtration of simplicial complexes \[K_1 \subseteq K_2 \subseteq \cdots \subseteq K_m.\]
Applying homology with coefficients in a field $F$ yields the following sequence of vector spaces and linear maps, \[\operatorname{H}_p(K_1;F) \to \operatorname{H}_p(K_2;F)\to \cdots \to \operatorname{H}_p(K_m;F),\]
which is a representation of a quiver of type $A_m$.
As is well-known, such representations decompose into a direct sum of representations of the form 
\[ \cdots \to 0 \to F \xrightarrow{1} F \xrightarrow{1} \cdots \xrightarrow{1} F \to 0 \to 0\to \cdots.\]
A corresponding result holds for filtrations over arbitrary totally ordered sets if the vector spaces are finite-dimensional. The supports of the intervals constitute the \emph{barcode} used in Step (3) as input for exploratory and statistical data analysis and machine learning methods; see \cite{chazal2021introduction} for an introduction to TDA and persistent homology.

There are, however, many situations where it is desirable to filter the data by more than one parameter. Examples include data with outliers or variations in density, time-varying data, and data that naturally comes equipped with a real-valued function; we refer the reader to \cite{botnan2022introduction} for a recent introduction to multiparameter persistent homology (MPH). The aforementioned settings naturally lead to \emph{bifiltrations} along a product $P$ of totally ordered sets, i.e., a family of topological spaces $\{D_p\}_{p\in P}$ satisfying $D_p\subseteq D_q$ for $p\leq q$. Of particular interest is the product (in the category of posets)
\[
A_{m_1}\otimes A_{m_2} :=
\begin{tikzcd}[row sep=4mm, column sep=5mm]
\scriptstyle (1, m_2) \ar[r] & \scriptstyle (2, m_2) \ar[r] &  \cdots\ar[r] & \scriptstyle (m_1, m_2) \\
\vdots \ar[u] & \vdots \ar[u] && \vdots \ar[u] \\
\scriptstyle (1,2) \ar[r]  \ar[u] & \scriptstyle (2,2) \ar[u]\ar[r] & \cdots \ar[r] & \scriptstyle (m_1, 2) \ar[u] \\
\scriptstyle (1,1) \ar[r]  \ar[u] & \scriptstyle (2,1) \ar[u]\ar[r] & \cdots \ar[r] & \scriptstyle (m_1, 1) \ar[u] 
\end{tikzcd}
\]
For
$(m_1-1) \times (m_2-1) > 4$, 
the poset
$A_{m_1}\otimes A_{m_2}$ 
is of wild representation type, and, in particular, there does not seem to be any meaningful way of using indecomposable representations in Step (3) in the above pipeline. Furthermore, as first observed by Carlsson and Zomorodian \cite{carlsson2007theory}*{Theorem~2}, this algebraic complexity is also present in representations coming from topology: for a prime field $F$, every linear representation of $A_{m_1}\otimes A_{m_2}$ can be obtained by applying $\operatorname{H}_1(-;F)_*$ to a bifiltration of simplicial or cellular complexes. For instance, the following indecomposable representation
\begin{equation}
\label{eq.intro-comm}
R = 
\begin{tikzcd}[column sep=15mm, ampersand replacement=\&]
F \ar{r}{\left(\begin{smallmatrix} 0 \\ 1 \end{smallmatrix}\right)} \& F^2\ar{r}{\left(\begin{smallmatrix} 1 & 0 \\ 0 & 0 \\0 & 1 \end{smallmatrix}\right)} \& F^3 \ar{r}{\left(\begin{smallmatrix} 1 & 0 & 0 \\ 0 & 1 & 0 \\ 0 & 0 & 1 \end{smallmatrix}\right)}\& F^3\ar{r}{\left(\begin{smallmatrix} 1 & 0 & 1\\ 0 & 1 & 1 \end{smallmatrix}\right)} \& F^2 \\
0\ar[u]\ar[r] \& 0\ar[u]\ar[r] \& F\ar[u]\ar{r}[swap]{\left(\begin{smallmatrix} 1 \\ 0 \end{smallmatrix}\right)} \ar{u}[swap]{\left(\begin{smallmatrix}0 \\ 1 \\ 0 \end{smallmatrix}\right)} \& F^2\ar{r}[swap]{\left(\begin{smallmatrix} 1 & 0\\ 0 & 1\end{smallmatrix}\right)}\ar{u}[swap]{\left(\begin{smallmatrix} 0 & 1 \\ 1 & 1 \\ 0 & 0 \end{smallmatrix}\right)} \& F^2\ar{u}[swap]{\left(\begin{smallmatrix} 0 & 1 \\ 1 & 1 \end{smallmatrix}\right)}
\end{tikzcd}.
\end{equation}
is obtained by applying $\operatorname{H}_1(-;F)_*$ to the bifiltration in Figure~\ref{fig.bifil}. We shall consider the generalized setting of arbitrary fields and the ring of integers in Section~\ref{sec.higherhom}.

\subsection{Homology in degree 0}
When working with homology in degree $0$, many natural bifiltrations in TDA have the additional property that in one parameter direction the linear maps in homology are epimorphisms; see the introduction of \cite{bauer2020cotorsion} for a discussion on multiparameter clustering. One may suspect that such an assumption will limit the types of indecomposables that can arise. For instance, it is not hard to show that representations with epimorphisms in one direction and monomorphisms in the other decompose into summands in a way similar to the totally ordered case. The following theorem shows that the reduction in complexity is rather limited. 
\begin{theorem}[\cite{bauer2020cotorsion}]
Linear representations of $A_{m_1}\otimes A_{m_2}$ that are epimorphic in the horizontal direction correspond (up to a finitely classified, completely explicit list of direct summands) to representations of $A_{m_1}\otimes A_{m_2-1}$. 
\end{theorem}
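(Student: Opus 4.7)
Let $\mathcal{E} \subseteq \rep(A_{m_1}\otimes A_{m_2}, \Vect)$ be the full subcategory of representations whose horizontal maps are all epimorphisms. The plan is to prove the statement by producing, for each $V \in \mathcal{E}$, a direct-sum decomposition $V \cong V_{\mathrm{red}} \oplus V_{\mathrm{list}}$, where $V_{\mathrm{list}}$ is a direct sum of indecomposables from an explicit finite list and $V_{\mathrm{red}}$ corresponds canonically to a representation of $A_{m_1}\otimes A_{m_2-1}$.

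The central structural input is that an epimorphic representation of $A_{m_1}$ is precisely a direct sum of interval modules $I_{[1,k]}$, which are the indecomposable injective objects of $\rep(A_{m_1},\Vect)$. Applying this to the top row $V_{(-,m_2)}$, the top vertical map $v\colon V_{(-,m_2-1)}\to V_{(-,m_2)}$ is a morphism of epimorphic $A_{m_1}$-representations whose cokernel splits off because the codomain is injective. A section $\operatorname{coker}(v)\hookrightarrow V_{(-,m_2)}$ extends to a splitting $V\cong V'\oplus V^C$ in $\mathcal{E}$, with $V^C$ concentrated in the top row and hence a direct sum of the $m_1$ indecomposables $I_{[1,k]\times\{m_2\}}$, the first batch of the explicit list.

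For the complementary summand $V'$, the top verticals are now surjective, and one would like to collapse rows $m_2-1$ and $m_2$ to obtain a representation of $A_{m_1}\otimes A_{m_2-1}$. This collapsing requires a further decomposition controlling the kernels $K_i=\ker(V'_{(i,m_2-1)}\twoheadrightarrow V'_{(i,m_2)})$, which assemble into a subrepresentation of row $m_2-1$. Splitting off this kernel subrepresentation, and identifying the remainder as the ``doubling'' of its restriction to rows $1,\ldots,m_2-1$, yields the desired correspondence, with any failure to split contributing to the explicit list.

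The main obstacle is this second decomposition. Unlike the cokernel split above, which is automatic by injectivity, the kernel split is not, because epimorphic $A_{m_1}$-representations are not projective. This is where I expect the cotorsion-pair framework hinted at by the title of \cite{bauer2020cotorsion} to enter: one shows that $\mathcal{E}$ and the additive hull of the explicit list form a complete cotorsion pair in $\rep(A_{m_1}\otimes A_{m_2},\Vect)$, so that the standard approximation sequences produce the reduction and classify the obstructions in a uniform way.
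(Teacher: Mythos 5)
First, a remark on the comparison itself: this theorem is imported verbatim from \cite{bauer2020cotorsion} and the present paper contains no proof of it, so your attempt can only be judged on its own terms. Your first reduction is correct in substance but misattributed. The epimorphism $V_{(-,m_2)}\twoheadrightarrow\operatorname{coker}(v)$ does not split ``because the codomain is injective'' --- epimorphisms onto injectives need not split, and the canonical epimorphism $I_{[1,2]}\twoheadrightarrow I_{[1,1]}$ between two objects of $\mathcal{E}$ is already non-split. What actually saves the step is that $\operatorname{im}(v)$ is a quotient of the injective object $V_{(-,m_2-1)}$ in the hereditary category $\rep(A_{m_1},\Vect)$ (the epimorphic $A_{m_1}$-representations are exactly the injectives, as you note), hence is itself injective, so its \emph{inclusion} into $V_{(-,m_2)}$ splits and the cokernel splits off as a complement. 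This is fixable.

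The genuine gap is the second step, and it cannot be outsourced to the title of \cite{bauer2020cotorsion}. The kernels $K_i=\ker(V'_{(i,m_2-1)}\to V'_{(i,m_2)})$ assemble into an arbitrary subrepresentation of an injective object of $\rep(A_{m_1},\Vect)$; such subobjects are not injective and their inclusions do not split. Concretely, for $m_1=m_2=2$ take bottom row $I_{[1,2]}=(F\xrightarrow{1}F)$, top row $I_{[1,1]}=(F\to 0)$, and the canonical epimorphism as vertical map: this is an indecomposable, horizontally epimorphic representation with $V^C=0$, $K=I_{[2,2]}=(0\to F)$, and $0\to I_{[2,2]}\to I_{[1,2]}\to I_{[1,1]}\to 0$ non-split, so ``splitting off the kernel subrepresentation'' fails on exactly the objects the theorem is about. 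Moreover, even granting some decomposition, your endgame does not land in the right category: the restriction of $V'$ to rows $1,\dots,m_2-1$ is still horizontally epimorphic, and the pair (restriction, $K$) is a representation of the smaller grid \emph{plus extra data} rather than an arbitrary representation of $A_{m_1}\otimes A_{m_2-1}$; you never explain how the epimorphicity constraint gets traded for the absence of any constraint, nor why the failures of splitting are absorbed by only \emph{finitely many} indecomposables --- which is precisely the content of the theorem. The argument of \cite{bauer2020cotorsion} is a global one (a cotorsion torsion triple on the whole category, exploiting that horizontally epimorphic representations are $A_{m_2}$-diagrams of injective $A_{m_1}$-representations and passing through a cokernel-type correspondence whose defect on indecomposables is the explicit finite list), not an inductive peeling of rows; your final paragraph states where such an argument ought to come from, but does not supply it.
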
  
In this paper we focus on a related property that applies to all diagrams of topological spaces. Namely, representations arising from homology in degree 0 factor through representations in the category of sets. That is, they are of the form $\funct{free} \circ S$, where $S$ is a commutative diagram of sets and set maps, and $\funct{free} \colon \Set \to \VectF$ denotes the functor which associates to a set $T$ the free $F$-vector space with basis $T$. That linear representations arising from a diagram of spaces by applying $\operatorname{H}_0$ are particularly well-behaved was first observed in \cite{chacholski2017combinatorial}, and more recently in \cites{brodzki2020complexity,cai2021elder,bauer2020cotorsion}. As an example, applying $\operatorname{H}_0(-;F)_*$ to the bifiltration in Figure~\ref{fig.bifil} yields the following linear representation
\[
\begin{tikzcd}[column sep=15mm, ampersand replacement=\&]
F \ar{r}{\left(\begin{smallmatrix} 0 \\ 1 \end{smallmatrix}\right)} \& F^2\ar{r}{\left(\begin{smallmatrix} 1 & 0 \\ 0 & 1 \end{smallmatrix}\right)} \& F^2 \ar{r}{\left(\begin{smallmatrix} 1 & 0 \\ 0 & 1\end{smallmatrix}\right)}\& F^2\ar{r}{\left(\begin{smallmatrix} 1 & 1 \end{smallmatrix}\right)} \& F\\
0\ar[u]\ar[r] \& 0\ar[u]\ar[r] \& F\ar[u]\ar{r}[swap]{\left(\begin{smallmatrix} 1 \\ 0 \end{smallmatrix}\right)} \ar{u}[swap]{\left(\begin{smallmatrix}1 \\ 0 \end{smallmatrix}\right)} \& F^2\ar[u]\ar{r}[swap]{\left(\begin{smallmatrix} 1 & 0\\ 0 & 1\end{smallmatrix}\right)}\ar{u}[swap]{\left(\begin{smallmatrix} 1 & 0 \\ 0 & 1 \end{smallmatrix}\right)} \& F^2\ar{u}[swap]{\left(\begin{smallmatrix}  1 & 1 \end{smallmatrix}\right)}
\end{tikzcd}.
\]

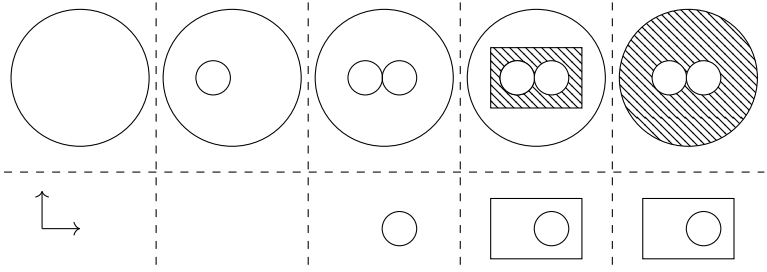
\begin{figure}
\centering
\begin{tikzpicture}

\draw[->] (-0.5, -2) -- (-0.5, -1.5);
\draw[->] (-0.5, -2) -- (-0, -2);

\draw[dashed] (-1,-1.25) -- (9, -1.25);
\draw[dashed] (1,1) -- (1, -2.5);
\draw[dashed] (3,1) -- (3, -2.5);
\draw[dashed] (5,1) -- (5, -2.5);
\draw[dashed] (7,1) -- (7, -2.5);

\draw[black] (0,0) circle (6ex); 
\draw[black] (2,0) circle (6ex); 
\draw[black] (4,0) circle (6ex); 
\draw[black] (6,0) circle (6ex); 
\draw[black, pattern=north west lines] (8,0) circle (6ex); 

\draw[black] (1.75, 0) circle (1.5ex);
\draw[black] (3.75, 0) circle (1.5ex);

\draw[black] (5.75, 0) circle (1.5ex);

\draw[pattern=north west lines] (5.4,-0.4) -- (6.6,-0.4) -- (6.6,0.4) -- (5.4,0.4) -- cycle;
\draw[black] (5.4,-2.4) -- (6.6,-2.4) -- (6.6,-1.6) -- (5.4,-1.6) -- cycle;

\draw[black] (7.4,-2.4) -- (8.6,-2.4) -- (8.6,-1.6) -- (7.4,-1.6) -- cycle;

\draw[black, fill=white] (5.75, 0) circle (1.5ex);

\draw[black, fill=white] (7.75, 0) circle (1.5ex);

\draw[black] (4.2, 0) circle (1.5ex);
\draw[black, fill=white] (6.2, 0) circle (1.5ex);
\draw[black, fill=white] (8.2, 0) circle (1.5ex);

\draw[black] (4.2, -2) circle (1.5ex);
\draw[black] (6.2, -2) circle (1.5ex);
\draw[black] (8.2, -2) circle (1.5ex);
\end{tikzpicture}
\caption{A bifiltration of a topological space.}
\label{fig.bifil}
\end{figure}
Observe that every column in the matrices of the structure maps contains exactly one entry equal to $1$, and is zero otherwise. 
This property characterizes precisely the essential image of the functor
\[
\funct{free}_* = \funct{free} \circ (-).
\]
Hence, such a basis exists not only for the representation shown above, but for any representation arising from a diagram of spaces via
$\operatorname{H}_0(-;F)_*$, since this functor factors through $\funct{free}_*$ (Proposition~\ref{prop.H0Free}). In particular, one easily convinces oneself that the representation $R$ from \eqref{eq.intro-comm} cannot be isomorphic to $\operatorname{H}_0(D;F)_*$ for any bifiltration $D$. What is less obvious is that $R$ is not even the summand of any $\operatorname{H}_0(D;F)_*$.

It is thus natural to wonder which representations ``come from homology in degree 0''. That is, we say that a representation $R \colon A_{m_1}\otimes A_{m_2} \to \VectF$ is \emph{additively $\operatorname{H}_0$-realizable} if $R$ is a summand of $\operatorname{H}_0(D;F)_*$ for some diagram of topological spaces $D \colon A_{m_1}\otimes A_{m_2} \to \Top$.

\begin{question} 
What are the additively $\operatorname{H}_0$-realizable representations $R$? 
\end{question}

While the above characterization of the image of $\operatorname{H}_0(-;F)_*$ (equivalently, of the free functor) is straightforward, answering this question appears to be very difficult in general.
As an example of how subtle being additively $\operatorname{H}_0$-realizable is, we show (Theorem~\ref{thm.Ainf}, together with Proposition~\ref{prop.Lan_is_realizable}) that if $F$ is a field of finite characteristic, then the following indecomposable representation of $A_3\otimes A_3$ is additively $\operatorname{H}_0$-realizable  if and only if $m$ is a power of the characteristic of $F$,
\[
R = \begin{tikzcd}[ampersand replacement = \&]
F^m \ar[r, "I_m"] \& F^m \ar[r]\& 0 \\
F^m \ar[u, "I_m"] \ar[r, "{\left[ \begin{smallmatrix} I_m \\ 0 \end{smallmatrix} \right]}"] \& F^{2m}\ar[u,swap, "{\left[ \begin{smallmatrix} I_m & J_m(1) \end{smallmatrix} \right]}"] \ar[r, "{\left[ \begin{smallmatrix} I_m & I_m \end{smallmatrix} \right]}"] \& F^m \ar[u] \\
0\ar[u]\ar[r] \& F^m \ar[u, "{\left[ \begin{smallmatrix} 0 \\ I_m \end{smallmatrix} \right]}"] \ar[r, "I_m"]  \& F^m\ar[u, "I_m"]\end{tikzcd}
\qquad \text{where}\qquad J_m(1) =
\begin{bmatrix}
 1 & 1 & 0& \cdots & 0 & 0 \\
 0 & 1 & 1 &\cdots & 0 & 0 \\
\vdots & \vdots & \vdots & \ddots & \vdots & \vdots \\
  0 & 0 & 0 & \cdots & 1 &  1\\
  0 & 0 & 0 & \cdots & 0 & 1
  \end{bmatrix}.
\]
While a complete understanding of the additively $\operatorname{H}_0$-realizable representations of commutative grids seems challenging, we do provide partial results in this setting, along with an algorithm for determining if a given $R$ is additively  $\operatorname{H}_0$-realizable. 


\subsection{Main question}
Before giving an overview of our results, we shall recast the above question in a more general framework. For a small category $X$ and $C$ any category, let $\rep(X, C)$ denote the category of functors from $X$ to $C$. The category of topological spaces is denoted by $\Top$, and $\VectF$ denotes the category of vector spaces over the field $F$. We let $\Set$ and $\pSet$ denote the categories of sets and pointed sets, respectively. An \emph{indicator representation} is a representation \( R \) for which there is a subset \( S \subseteq \operatorname{Obj}(X) \) such that
\[ R(x) = \begin{cases} F & x \in S \\ 0 & x \not\in S \end{cases}\qquad  \text{ and }\qquad  R(x \to y) = \begin{cases} {\rm id}_F & x,y \in S \\ 0 & \text{otherwise} \end{cases}. \]

\begin{question} \label{questions}
Given a small category $X$, what is the \emph{additive image} (the closure of the image under direct sums and summands) of the functor
\[ \operatorname{H}_0(-;F)_* \colon \rep(X, \Top) \to \rep(X, \VectF)? \]
In particular we may ask about the following three extreme cases:
\begin{enumerate}
\item Are all indecomposables in the additive image of \( \operatorname{H}_0(-;F)_* \) indicator representations?
\item Are there only finitely many indecomposables in the additive image of \( \operatorname{H}_0(-;F)_* \)?
\item Are all indecomposables in the additive image of \( \operatorname{H}_0(-;F)_* \)? That is, is \( \operatorname{H}_0(-;F)_* \) \emph{additively surjective}?
\end{enumerate}

\end{question}

Our answers to these questions will --- unsurprisingly --- depend on the category \( X \), and sometimes also on the field $F$. More surprisingly, and in contrast to the situation for linear representations, they also depend heavily on the orientation of the arrows in \( X \). For instance, the extended Dynkin quiver $\tilde D_4$,
\[
\begin{tikzcd}
& \circ & \\
\circ \ar[r,-,thick] & \circ \ar[u, -, thick] \ar[d, -, thick] & \ar[l, -, thick] \circ \\
& \circ & 
\end{tikzcd}
\]
 is of tame representation type and thus has infinite families of indecomposables over any field. We prove in Section~\ref{subsect:only_thin} that if all the arrows are oriented inwards, then any indecomposable representation in the additive image has vector space dimension at most 1 over any vertex. By contrast, we prove in Section~\ref{subsec.d4} that if all the arrows are oriented outwards and $F$ is an algebraic extension of a finite field, then every representation is in the additive image. This conclusion does not hold for arbitrary infinite fields, like the reals, as shown in Example~\ref{ex:D4-trans}. 

\subsection{Overview and Structure}
In Section~\ref{sec.basic} we prove the following simple proposition, which allows us to work solely with (pointed) sets. 
\begin{resultx}[Propositions~\ref{prop.H0Free} and~\ref{prop.same_add_image}]
For a small category \( X \), the additive images of the functors 
\begin{align*}
\operatorname{H}_0(-;F)_* & \colon \rep(X, \Top) \to \rep(X, \VectF) \text{,} \\
\funct{free}_* & \colon \rep(X, \Set) \to \rep(X, \VectF) \text{, and} \\
\funct{free}^\mathrm{pt}_* & \colon \rep(X, \pSet) \to \rep(X, \VectF)
\end{align*}
coincide, where the functors in lines 2 and 3 turn each set (resp.\ pointed set) appearing in a representation into a vector space. See the beginning of Section~\ref{sec.basic} and Section~\ref{subsect.pointed_sets}, respectively, for formal definitions.
\end{resultx}
We therefore say that a representation in $\rep(X, \VectF)$ is (additively) $\Set$-realizable if it is in the (additive) essential image of any of these functors. 

In Section~\ref{sec.basic}, we also prove Theorem~\ref{thm.fieldext} which states that $R\in \rep(X, \VectF)$ is additively $\Set$-realizable if and only if $R\otimes_F F'$ is additively $\Set$-realizable, where $F'$ is any field extension of $F$. In Proposition~\ref{prop.repinfsets}, we shall also see that it suffices to consider the category of finite sets ($\set$) if $X$  is finite and the vector spaces are finite-dimensional ($\vectF$).

In Section~\ref{sec.algo} we leverage the fact that the free functor $\funct{free} \colon \Set\to \VectF$ is a left adjoint to construct an algorithm for determining if a given representation is additively $\Set$-realizable, given that the field is finite. In the following statement $|X|$ denotes the total number of morphisms in the category $X$, and $|F|$ is the number of elements in $F$. Both the category and the field are assumed to be finite.
\begin{resultx}[Corollary~\ref{cor.algo}]
\label{resX.gaussian}
Using Gaussian elimination, one can determine if $R\in \rep(X,\vectF)$ is additively $\Set$-realizable in $\mathcal{O}\left(m^3\cdot |X|^3\cdot |F|^{3m} \right)$ arithmetic operations, where $m = \max_{i\in \operatorname{Obj}(X)} \dim R(i)$.
\end{resultx}
If \( X \) is a poset, \( |X| \) can be replaced with the number of arrows in its Hasse diagram. Similarly, if \( X \) is a quiver, \( |X| \) can be chosen to be the number of arrows in \( X \); see Remark~\ref{remark.minimalarrows}.  A GAP/QPA \cite{GAP4,qpa} implementation of this algorithm is available online\footnote{Github repository: \url{https://github.com/mbotnan/set-realizable/}.}. 

In Section~\ref{sec:scalarsfiniteorder}, we show that if a representation $R$ has a basis where each basis vector is mapped to a scalar multiple of another basis vector, and the scalar is a root of unity, then $R$ is additively $\Set$-realizable. We establish this result using two distinct approaches: first, an explicit method involving matrices in Section~\ref{sec.vandermonde}, and second, as a corollary of a broader result on group actions in Section~\ref{sec.groupac}. The latter approach also yields a slight improvement in the running time of Result~\ref{resX.gaussian}, discussed in Section~\ref{sec:gset-algorithm}.

In Section~\ref{sec:transfer}, we investigate how $\Set$-realizability changes when the category $X$ is modified. Specifically, we consider cases of a full subcategory (Section~\ref{sec:fullsubcat}), a full subcategory lacking only one terminal object (Section~\ref{sec:terminalobj}), edge contractions (Section~\ref{sec.edge_contractions}), and the case of a quiver obtained from another by reversing the arrows at a source with at most two outgoing arrows (Section~\ref{sect.APR}). Notably, we show that if the indexing category contains a terminal vertex $t$ and $R$ is indecomposable and additively $\Set$-realizable, then $\dim R(t) \leq 1$ (Corollary~\ref{prop.colim_1d}). This result provides an explanation for why Eq.~\eqref{eq.intro-comm} does not belong to the additive image. Building on this, we prove the following.

\begin{resultx}[Corollary~\ref{cor.treequiver}]
Let \( Q \) be a tree quiver with a single sink $s$. Then there are only finitely many additively $\Set$-realizable indecomposable representations.
\end{resultx}
In Section~\ref{subsect:only_thin}, we give a complete answer to Question~\ref{questions} (1).
\begin{resultx}[Theorem~\ref{thm.onlythin}]
Let \( X \) be a finite category. Then all indecomposable additively \( \Set \)-realizable representations are indicator representations if and only if \( X \) is equivalent to the poset category of a poset that is a disjoint union of
\begin{enumerate}
\item posets with a Hasse diagram of type \( A \),
\item posets with a single maximal element, such that the poset without the unique maximal element is a disjoint union of posets with a Hasse diagram of type \( A \) with unique minimal elements.
\end{enumerate}
\end{resultx}
In Section~\ref{sec.quivers} we shift our focus to quiver representations. We answer all parts of Question~\ref{questions} completely for (extended) Dynkin types $A$, $D$ and $\tilde{A}$ for $F$ any field, and for the extended Dynkin type $\tilde{D}_4$ when $F$ is an algebraic extension of a finite field. In the extended case we make the assumption that vector spaces are finite-dimensional. 

Using computer algebra software, we answer Question~\ref{questions} for Dynkin type $E$ for fields of characteristic 2. 
\begin{resultx}[Proposition~\ref{prop.D4tilde_via_A3tilde} and Theorem~\ref{thm:D4_three_outgoing}]
Let $Q$ be a quiver of type $\tilde{D}_4$ and $F$ an algebraic extension of a finite field. Then the functor $\funct{free}_* \colon \rep(Q, \Set)\to \rep(Q, \vectF)$ is additively surjective if and only if $Q$ has at least three outgoing arrows. 
\end{resultx}

Building on our results for extended Dynkin quivers, we return to our original problem of grids in Section~\ref{sec.finitegrids}, and prove the following:

\begin{resultx}[Corollary~\ref{cor:finite_grods_of_infinite_type} and Observation~\ref{obs:repfinite_grids}]
For a finite grid and any field $F$, the following are equivalent:
\begin{enumerate}
\item There are infinitely many indecomposable representations.
\item There are infinitely many additively \( \Set \)-realizable indecomposable representations.
\item There are infinitely many indecomposable representations which are not additively \( \Set \)-realizable.
\end{enumerate}
Furthermore, if the grid is of finite representation type, then all representations are in the essential image of \( \funct{free}_* \).
\end{resultx}

To illustrate that this result is surprising, let us compare it to the situation for quivers: It follows from the results in Section~\ref{sec.quivers} that corresponding statement for quivers fails in both possible ways: There are quivers with infinitely many indecomposable representations but only finitely many additively \( \Set \)-realizable ones, and there are quivers with infinitely many indecomposable representations all of which are additively \( \Set \)-realizable.

In Section~\ref{sec.higherhom}, we complement the discussion from Section~\ref{sec:scalarsfiniteorder} by showing that transcendental numbers appearing in a representation obstruct additive $\Set$-realizability. Furthermore, this obstruction holds even for additive realizability as higher homologies. Our main theorem is as follows:
\begin{resultx}[Theorem~\ref{thm.char_of_Z_realizable}]
Let \( F \) be a field, \( X \) a finite category, and \( R \in \rep(X, \vectF) \). If, \( R \) is in the additive image of 
\[ \operatorname{H}_n(-;F)_* \colon \rep(X, \Top) \to \rep(X, \vectF), \] then \( R \) admits a basis such that the matrix representations of the linear maps only contain elements that are algebraic over the prime field of \( F \). Conversely, if \( R \) admits such a basis and $n\geq 1$, then $R$ is in the additive image of $ \operatorname{H}_n(-;F)_*$. 
\end{resultx}
The paper concludes with a discussion, and paths for future research.

\section{Related Work}

This paper builds on our previous work~\cite{bauer2020cotorsion}, where we investigated representations with additional algebraic properties satisfied by those arising from $\operatorname{H}_0(-;F)_*$. Since the representation theory of such representations appears intractable, our focus here is on classifying the precise representations in the (additive) image of $\operatorname{H}_0(-;F)_*$. 

In the context of topological data analysis, two works have addressed related questions. Brodzki, Burfitt, and Pirashvili~\cite{brodzki2020complexity} studied the essential image of $\operatorname{H}_0(-;F)_*$, obtaining similar results for posets to Proposition~\ref{prop.same_add_image} and Proposition~\ref{prop.graphs} in this paper; see \cite[Theorem 4.4, Theorem 5.9]{brodzki2020complexity}. Another recent work by Bindua, Brüstle, and Scoccola~\cite{bindua2024decomposing}, developed in parallel to ours, focuses on the additive image of $\operatorname{H}_0(-;F)_*$ for indexing categories that are rooted trees. They establish that only finitely many indecomposable representations exist in the additive image in this case, leading to our Corollary~\ref{cor.treequiver}. Their approach, complementary to ours, is tailored to rooted trees and leverages Kinser's work~\cite{kinser2010rank} to describe all indecomposables in the additive image~\cite[Corollary C]{bindua2024decomposing}. Moreover, they present a quadratic-time algorithm for decomposing representations obtained by applying $\operatorname{H}_0$ to a filtration of a graph $G$, where the input size is the number of edges and vertices in $G$~\cite[Theorem D]{bindua2024decomposing}.

For higher-degree homology, Carlsson and Zomorodian~\cite{carlsson2007theory} showed that for prime fields $F$ the functor $\operatorname{H}_n(-;F)_*$ is essentially surjective. This result was later extended to $\Z$ coefficients by Harrington, Otter, Schenck, and Tillmann~\cite{harrington2019stratifying}. Our work is the first to address the case where $F$ is not necessarily prime.

\section{First properties}
\label{sec.basic}

\subsection{Setting the stage}

Throughout this paper, we will be considering representations of small categories. The two classes of examples we most often have in mind are the following:

The \emph{poset cateogory} of a (often finite) poset \( X \), that is the category whose objects are the elements of \( X \), and whose morphisms are the relations \( x_1 \leq x_2 \in X \).

The \emph{path category} of a (typically finite) quiver \( Q \). (Quiver is another word for directed graph.) That is the category where the objects are the vertices of $Q$, and the morphisms are directed paths in $Q$.

By abuse of notation we will mostly identify a poset with its poset category and a quiver with its path category. 

For a small category $X$ and a category $C$, we define a \emph{representation of $X$ in $C$} to be a functor $R\colon X\to C$, and we denote the corresponding functor category by $\rep(X,C)$. For most of this paper, $C$ is the category of topological spaces, of (pointed) sets, or  of (finite-dimensional) vector spaces. We adapt the following notation: 
\begin{enumerate}
\item $\set\subseteq \Set$ = (finite) sets, $\pset \subseteq \pSet$ = (finite) pointed sets,
\item $\vectF\subseteq\VectF$ = (finite-dimensional) $F$-vector spaces.
\end{enumerate}
We shall repeatedly make use of the fact that the free functor
$\funct{free} \colon \Set \to \VectF$
is left adjoint to the forgetful functor $\funct{forget} \colon \VectF \to \Set$.

For a functor \( \funct{F} \colon C_1 \to C_2 \), and any small category \( X \), we get an associated functor \( \funct{F}_* \colon \rep(X, C_1) \to \rep(X, C_2) \) given by the composition \( \funct{F}_*(R) = \funct{F} \circ R \). In particular we will be concerned with the functors \( \funct{free}_* \) and \( \funct{forget}_* \).

For a functor \( T \colon A \to B \), the \emph{essential image} is the collection of all objects \( Y \) in \( B \) such that \( Y \cong T(Z) \) for some \( Z \in \operatorname{Obj}(A) \). When \( B \) is additive (e.g., vector spaces or abelian groups), the \emph{additive image} is the collection of all objects \( Y \) such that \( Y \oplus Y' \cong T(Z) \) for some \( Z \in \operatorname{Obj}(A) \) and \( Y' \in \operatorname{Obj}(B) \), together with all finite direct sums of such objects \(Y\). 

\begin{remark}
\label{rem.addimage}
Note that if \(A\) admits finite coproducts and \(T\) preserves them, then closure under finite direct sums is automatic.

In Corollary~\ref{cor.adj_summand} we will see that \( T \) being left adjoint also makes closure under finite direct sums automatic.

In the situations considered throughout this paper, both conditions will always be true, so the main point of interest is closure under direct summands.
\end{remark}

In this preliminary subsection, we point out that the essential (additive) images of \( \operatorname{H}_0(-; F)_* \) and \( \funct{free}_* \) coincide, discuss two immediate properties of the latter, and finally point out that for \( \operatorname{H}_0(-;F)_* \) it suffices to consider graphs rather than arbitrary topological spaces.

\begin{proposition}
\label{prop.H0Free}
Let $X$ be a small category. The essential image of the functor \[ \operatorname{H}_0(-; F)_* \colon \rep(X, \Top ) \to \rep(X, \VectF) \] is the same as the essential image of the functor \[ \funct{free}_* \colon \rep(X, \Set ) \to \rep(X, \VectF). \]
In particular, also the additive images of ${H}_0(-; F)_*$ and $\funct{free}_*$ coincide.
\end{proposition}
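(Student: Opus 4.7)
The plan is to exhibit a concrete factorization of $\operatorname{H}_0(-,F)$ through $\funct{free}$ and, conversely, a lift of $\funct{free}$ through $\operatorname{H}_0(-,F)$, both at the level of the underlying target category. These natural transformations then post-compose into $\rep(X,-)$ to give equality of essential images.

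First I would recall the two standard natural isomorphisms at the level of single objects: for any topological space $T$, one has a canonical isomorphism $\operatorname{H}_0(T,F)\cong\funct{free}(\pi_0(T))$, natural in $T$. Hence the diagram
\[
\operatorname{H}_0(-,F)\colon \Top \xrightarrow{\;\pi_0\;} \Set \xrightarrow{\;\funct{free}\;} \Vect
\]
commutes up to natural isomorphism. Conversely, the discrete-space functor $\funct{discrete}\colon\Set\to\Top$ satisfies $\pi_0\circ\funct{discrete}\cong\operatorname{id}_\Set$, so $\operatorname{H}_0(\funct{discrete}(S),F)\cong\funct{free}(S)$, again naturally in $S$.

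Next I would post-compose with $\rep(X,-)$. Given $D\in\rep(X,\Top)$, the first natural isomorphism applied pointwise in $X$ shows $\operatorname{H}_0(D,F)\cong\funct{free}(\pi_0\circ D)$ in $\rep(X,\Vect)$, so the essential image of $\operatorname{H}_0(-,F)$ is contained in the essential image of $\funct{free}$. Given $S\in\rep(X,\Set)$, applying $\funct{discrete}$ pointwise yields $\funct{discrete}\circ S\in\rep(X,\Top)$ and the second natural isomorphism gives $\operatorname{H}_0(\funct{discrete}\circ S,F)\cong\funct{free}(S)$, providing the reverse containment.

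Finally, for the "in particular" clause, I would observe that the additive image of a functor into an additive category is by definition the closure of its essential image under finite direct sums and direct summands; since this closure operation is intrinsic to the target category $\rep(X,\Vect)$, equality of essential images automatically yields equality of additive images. No separate argument is needed. I do not expect any step here to be a real obstacle: the proof is essentially an unpacking of the identification of $\operatorname{H}_0$ with the free vector space on path components, together with the existence of the discrete-topology section.
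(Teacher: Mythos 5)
Your proof is correct and follows exactly the paper's approach: the paper also factors $\operatorname{H}_0(-,F)$ through $\funct{free}$ via $\pi_0$ and lifts $\funct{free}$ through $\operatorname{H}_0(-,F)$ via the discrete-space functor, with the statement about additive images following formally. Your write-up merely spells out the pointwise naturality details that the paper leaves implicit.
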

\begin{proof}
This follows immediately from the fact that the two functors factor through one another. In the one direction, consider the functor taking a set to the corresponding discrete topological space. In the other direction, consider the functor $\pi_0$ taking any topological space to the set of its path-connected components. 
\end{proof}

First, we observe that the essential image of $\funct{free}_*$ can be easily described.
\begin{proposition}
\label{prop:free-mult-basis}
Let $X$ be a small category and \( R \in \rep(X, \VectF ) \). Then \( R \) lies in the essential image of \( \funct{free}_* \) if and only if \( R \) has a basis such that any arrow maps a basis vector to a basis vector.
\end{proposition}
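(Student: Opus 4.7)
The proposition is a clean two-way translation between set-level data (a functor $S \colon X \to \Set$) and vector-space-level data (a representation $R$ equipped with a compatible family of bases). My plan is to prove each direction separately, both by direct construction, with the only real care needed being the verification of functoriality.

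For the forward direction, suppose $R \cong \funct{free}(S)$ for some $S \in \rep(X, \Set)$. I would take the distinguished basis $B(x) \coloneqq S(x) \subseteq \funct{free}(S(x))$ at each object $x$ (the set $S(x)$ sitting inside its own free vector space). For any arrow $\alpha \colon x \to y$ in $X$, the map $R(\alpha)$ is by construction $\funct{free}(S(\alpha))$, whose definition on a basis vector $b \in S(x)$ is simply $S(\alpha)(b) \in S(y) = B(y)$. Thus arrows send basis vectors to basis vectors, and transporting along the isomorphism $R \cong \funct{free}(S)$ gives the required basis for $R$.

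For the converse, suppose $R$ comes equipped with a family of bases $B(x) \subseteq R(x)$ such that for each arrow $\alpha \colon x \to y$ and each $b \in B(x)$, $R(\alpha)(b) \in B(y)$. I would then define $S \in \rep(X, \Set)$ by $S(x) \coloneqq B(x)$ and $S(\alpha) \coloneqq R(\alpha)|_{B(x)} \colon B(x) \to B(y)$. Functoriality of $S$ is inherited from $R$: on a basis element $b \in B(x)$, $S(\beta \circ \alpha)(b) = R(\beta\circ\alpha)(b) = R(\beta)(R(\alpha)(b)) = S(\beta)(S(\alpha)(b))$, and $S(\mathrm{id}_x)(b) = R(\mathrm{id}_x)(b) = b$. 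Finally, the universal property of the free vector space yields, at each object $x$, a canonical isomorphism $\funct{free}(S(x)) \to R(x)$ extending the inclusion $B(x) \hookrightarrow R(x)$, and these assemble into a natural isomorphism $\funct{free}(S) \cong R$ because both the structure maps of $\funct{free}(S)$ and those of $R$ agree on basis vectors.

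There is no real obstacle here; the only subtlety is bookkeeping: making sure that ``has a basis'' is interpreted pointwise at each object of $X$ (not a single global basis), and that compatibility of the local bases with every morphism is what encodes the existence of the set-valued lift $S$. The argument is purely formal, using nothing beyond the universal property of $\funct{free}$ as left adjoint to $\funct{forget}$.
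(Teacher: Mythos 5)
Your proof is correct and is exactly the unpacking of what the paper dismisses with ``immediate from the definition of $\funct{free}$'': the canonical basis $S(x)\subseteq\funct{free}(S(x))$ in one direction, and the set-valued lift $S(x)=B(x)$ with restricted structure maps in the other. Nothing to add.
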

\begin{proof}
This is immediate from the definition of $\funct{free}_*$.
\end{proof}

Second, if we are only interested in finite-dimensional linear representations of finite categories, then it suffices to consider finite sets by the following result.

\begin{proposition} 
\label{prop.repinfsets}
Let $X$ be a finite category and \( R \in \rep(X, \vectF) \). Then $R$ is in the additive image of the functor \( \funct{free}_* \colon \rep(X, \Set) \to \rep(Q, \VectF) \) if and only if it is in the additive image of the functor \( \funct{free}_* \colon \rep(X, \set) \to \rep(Q, \vectF) \).
\end{proposition}

\begin{proof}
Let \( S \in \rep(X, \Set ) \) be such that \( R \) is a direct summand of \( \funct{free} \circ S \). We need to show that there is \( S' \in \rep(X, \set ) \) such that \( R \) is also a direct summand of \( \funct{free} \circ S' \). Let \( \varphi \colon \funct{free} \circ S \longleftrightarrow R \colon \psi \) be a pair of morphisms such that \( \varphi \circ \psi = \operatorname{id}_R \). For each vertex \( i \in X \), the vector space \( \Im \psi_i \subseteq \funct{free}(S_i) \) is finite-dimensional. In particular, there is a finite subset \( S^{\rm fin}_i \subseteq S_i \) such that \( \Im \psi_i \subseteq \funct{free}(S^{\rm fin}_i) \). Now we can choose \( S' \) to be the sub-representation of \( S \) generated by all the subsets \( S^{\rm fin}_i \). Since our category \( X \) is finite, \( S' \in \rep(X, \set ) \) as desired. Moreover, we have the following commutative diagram:
\[\begin{tikzcd}
R \ar[rrrr, bend left, "\psi"] \ar[rr, "\psi"]& & \funct{free} \circ S' \arrow[hookrightarrow]{rr}& & \funct{free} \circ S \ar[llll, bend left, "\phi"]
\end{tikzcd}
\]
It follows that \( R \) is also a direct summand of \( \funct{free} \circ S' \).
\end{proof}
We conclude this subsection with the following simple observation.
\begin{proposition}
\label{prop.graphs}
Let $X$ be a finite poset, and assume that $R\in \rep(X, \vectF)$ is in the additive image of $\operatorname{H}_0(-;F)_*$. Then there exists a diagram $G\in\rep(X,\Top)$ of finite 1-dimensional simplicial complexes (graphs) such that $G_p\subseteq G_q$ for $p\leq q$, and such that $R$ is a direct summand of $\operatorname{H}_0(G; F)_*$.
\end{proposition}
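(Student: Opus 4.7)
The plan is to reduce to the set-valued case and then functorially thicken the resulting diagram to one of graphs with inclusions whose $\pi_0$ recovers the original sets. By Proposition~\ref{prop.H0Free}, we may choose $S \in \rep(X, \Set)$ with $R$ a direct summand of $\funct{free}(S)$, and by Proposition~\ref{prop.enoughToConsiderFinite}, we may further take $S \in \rep(X, \set)$, so that each $S(y)$ is a finite set.

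For each $x \in X$ I would define a finite 1-dimensional simplicial complex $G_x$ with vertex set $V_x = \bigsqcup_{y \leq x} S(y)$ and, for every strict inequality $y_1 < y_2 \leq x$ in $X$ and every $s \in S(y_1)$, an edge joining $s \in S(y_1)$ to $S(y_1 \leq y_2)(s) \in S(y_2)$. Since the sets $S(y)$ are disjoint in $V_x$, no two of these prescriptions produce the same edge, so $G_x$ is a genuine simplicial complex, and it is finite because $X$ and each $S(y)$ are finite. If $p \leq q$ in $X$, then any $y \leq p$ also satisfies $y \leq q$, so $G_p \subseteq G_q$, making $G \colon X \to \Top$ a functor whose structure maps are inclusions.

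To identify $\pi_0(G_x)$ with $S(x)$ naturally in $x$, I would consider the map $\varphi_x \colon V_x \to S(x)$ defined by $\varphi_x(s) = S(y \leq x)(s)$ whenever $s \in S(y)$. Functoriality of $S$ gives $S(y_1 \leq x) = S(y_2 \leq x) \circ S(y_1 \leq y_2)$, so $\varphi_x$ agrees on the two endpoints of every prescribed edge and descends to a map $\pi_0(G_x) \to S(x)$. This descended map is surjective because $\varphi_x$ fixes $S(x) \subseteq V_x$ pointwise, and injective because for each $y < x$ and each $s \in S(y)$ the prescribed edge from $s$ to $\varphi_x(s) \in S(x)$ lies in $G_x$; hence every connected component of $G_x$ meets $S(x)$ in exactly one element. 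Naturality in $x$ is immediate, so $\operatorname{H}_0(G;F) \cong \funct{free}(\pi_0 G) \cong \funct{free}(S)$, which has $R$ as a direct summand.

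The only substantive step is the injectivity of $\pi_0(G_x) \to S(x)$, i.e.\ ruling out accidental identifications via zigzag paths in $G_x$; this is precisely what the commutativity relations coming from functoriality of $S$ enforce. The remaining verifications (finiteness, inclusions, surjectivity, naturality) are routine bookkeeping.
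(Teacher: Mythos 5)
Your proposal is correct and follows essentially the same route as the paper: reduce to a finite set-valued diagram $S$ via Propositions~\ref{prop.H0Free} and~\ref{prop.repinfsets}, build $G_x$ on the vertex set $\bigsqcup_{y\le x}S(y)$ with edges given by the structure maps of $S$, and verify that each connected component of $G_x$ contains exactly one element of $S(x)$, so that $\operatorname{H}_0(G;F)\cong\funct{free}(S)$. Your treatment of the key injectivity step (the map $\varphi_x$ descends to components and restricts to the identity on $S(x)$, while every vertex is joined by an edge to its image in $S(x)$) is if anything slightly more careful than the paper's.
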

\begin{proof}
From Proposition~\ref{prop.H0Free} and Proposition~\ref{prop.repinfsets}, there exists an $S\in \rep(X, \set)$ such that $R$ is a direct summand of $\funct{free} \circ S \in \rep(X, \vectF)$. Furthermore, by considering $S$ as diagram of 0-dimensional simplicial complexes and simplicial maps, we have that $\funct{free} \circ S \cong H_0(S;F)_*$. Let $G(p)$ be the simplicial complex with vertices given by the disjoint union $\bigsqcup_{r\leq p} S(p)$, and with an edge connecting $(i,q)$ and $(j,q')$ if $S(q\to q')(i)=j$. Since there exists a unique morphism from $r$ to $p$, $(i,r)$ is in the same connected component of $G(p)$ as precisely one element $(j,p)$, namely $(S(q\to q')(i), p)$. It follows that the pointwise inclusion $S(p) \hookrightarrow G(p)$ mapping $i$ to $(i, p)$ induces an isomorphism $H_0(S;F)_* \cong H_0(G;F)_*$. 
\end{proof}

\subsection{The additive image over pointed sets} \label{subsect.pointed_sets}
Often it is more convenient to work with pointed sets. The next observation is that, at least when considering the additive image, this is equivalent to considering sets.

Note that there is a forgetful functor \( \funct{forget}^\mathrm{pt} \colon \Vect \to \pSet \), remembering the zero vector of the vector space as base point. This functor also has a left adjoint \( \funct{free}^\mathrm{pt} \colon \pSet \to \Vect \) given explicitly by the free vector space on the set minus its base point. As before, we will also consider the functors \( \funct{forget}^\mathrm{pt}_* \) and \( \funct{free}^\mathrm{pt}_* \) induced on categories of representations by compositions.

\begin{lemma} \label{obs.image_inclusion}
Let $X$ be a small category. Then the essential image of the functor \( \funct{free}_* \colon \rep(X, \Set ) \to \rep(X, \VectF) \) is contained in the essential image of \( \funct{free}^\mathrm{pt}_* \colon \rep(X, \pSet ) \to \rep(X, \VectF)\).
\end{lemma}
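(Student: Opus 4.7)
The plan is to exhibit a functor from sets to pointed sets which, after postcomposition with $\funct{free}_*$, recovers $\funct{free}$. The natural candidate is the "add a disjoint basepoint" functor $(-)_+ \colon \Set \to \pSet$, sending a set $S$ to $S_+ := S \sqcup \{*\}$ with $*$ as the basepoint, and a map $f \colon S \to T$ to the map $f_+ \colon S_+ \to T_+$ that agrees with $f$ on $S$ and sends $*$ to $*$. This is straightforwardly functorial.

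Postcomposing a representation $S \in \rep(X, \Set)$ with $(-)_+$ yields a representation $S_+ \in \rep(X, \pSet)$ at each object $x \in X$. The key computation is that $\funct{free}_*$, which by definition sends a pointed set $(T, t_0)$ to the free $F$-vector space on $T$ modulo the one-dimensional subspace spanned by $t_0$, satisfies
\[
\funct{free}_*(S_+) \;=\; \funct{free}(S \sqcup \{*\})/F\cdot * \;\cong\; \funct{free}(S),
\]
naturally in $S$. Hence the diagram of functors
\[
\begin{tikzcd}
\rep(X, \Set) \ar[r, "(-)_+"] \ar[rd, swap, "\funct{free}"] & \rep(X, \pSet) \ar[d, "\funct{free}_*"] \\
& \rep(X, \Vect)
\end{tikzcd}
\]
commutes up to natural isomorphism.

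Given any $R \in \rep(X, \Vect)$ in the essential image of $\funct{free}$, we pick $S$ with $R \cong \funct{free}(S)$, apply $(-)_+$ pointwise to get $S_+ \in \rep(X, \pSet)$, and conclude $R \cong \funct{free}_*(S_+)$. This places $R$ in the essential image of $\funct{free}_*$, as required. There is no real obstacle here; the only point worth being explicit about is the definition of $\funct{free}_*$ on pointed sets (quotienting by the basepoint), which is what makes $\funct{free}_* \circ (-)_+$ collapse to $\funct{free}$.
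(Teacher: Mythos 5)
Your proof is correct and follows essentially the same route as the paper: both rest on the factorization $\funct{free} \cong \funct{free}_* \circ (- \sqcup \{*\})$. The only cosmetic difference is that you verify this identity by direct computation with the quotient description of $\funct{free}_*$, whereas the paper deduces it from uniqueness of left adjoints applied to the factorization of the forgetful functor $\Vect \to \pSet \to \Set$.
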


For the proof of this lemma, we need the following observation:

\begin{observation}
The forgetful functor \( \funct{forget-base} \colon \pSet \to \Set \) has a left adjoint, given by \( S \mapsto S \cup \{ * \} \), where \( * \) is the base point of the image. Indeed, a map from \( S \cup \{ * \} \) to any pointed set is given by a set map on \( S \) -- we have no choice where to send the base point \( * \).
\end{observation}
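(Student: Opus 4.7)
The plan is to verify the claimed adjunction $(-)_+ \dashv U$, where $(-)_+ \colon \Set \to \Set_*$ sends $S$ to $S_+ := S \sqcup \{*\}$ with distinguished basepoint $*$, and $U \colon \Set_* \to \Set$ is the forgetful functor. I would do this by exhibiting an explicit natural bijection
\[ \Phi_{S, (T, t_0)} \colon \Hom_{\Set_*}\bigl(S_+, (T, t_0)\bigr) \xrightarrow{\ \sim\ } \Hom_{\Set}\bigl(S, U(T, t_0)\bigr), \]
functorial in $S \in \Set$ and $(T, t_0) \in \Set_*$.

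First I would make $(-)_+$ into a functor: on a map $f \colon S \to S'$ in $\Set$, define $f_+ \colon S_+ \to S'_+$ to agree with $f$ on $S$ and to send $*$ to $*$; this is manifestly a morphism of pointed sets, and functoriality ($\mathrm{id}_+ = \mathrm{id}$ and $(g \circ f)_+ = g_+ \circ f_+$) is immediate. Then I would define $\Phi$ by restriction: a pointed map $g \colon S_+ \to (T, t_0)$ is sent to its restriction $g|_S \colon S \to T$. The inverse sends a set map $f \colon S \to T$ to the extension $\tilde{f} \colon S_+ \to T$ with $\tilde{f}|_S = f$ and $\tilde{f}(*) = t_0$; this is a morphism of pointed sets by construction. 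That these two assignments are mutually inverse follows from the fact that a pointed map out of $S_+$ is completely determined by its restriction to $S$, since the value at $*$ is forced to be $t_0$.

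Next I would check naturality. Given a set map $\alpha \colon S' \to S$ and a pointed map $\beta \colon (T, t_0) \to (T', t'_0)$, I need the square
\[ \begin{array}{ccc} \Hom_{\Set_*}(S_+, (T, t_0)) & \xrightarrow{\Phi} & \Hom_{\Set}(S, U(T, t_0)) \\ \scriptstyle{\Hom(\alpha_+, \beta)} \downarrow & & \downarrow \scriptstyle{\Hom(\alpha, U\beta)} \\ \Hom_{\Set_*}(S'_+, (T', t'_0)) & \xrightarrow{\Phi} & \Hom_{\Set}(S', U(T', t'_0)) \end{array} \]
to commute; unwinding both composites, they send $g \colon S_+ \to T$ to $\beta \circ g \circ \alpha_+$ restricted to $S'$, which equals $U(\beta) \circ (g|_S) \circ \alpha$, so the square commutes pointwise.

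There is no real obstacle here — the whole verification is a routine unwinding of definitions, and the only thing to be slightly careful about is the case $S = \emptyset$ (where $S_+ = \{*\}$ is the zero object of $\Set_*$ and both Hom sets are singletons) and the convention that the basepoint of $S_+$ must be sent to $t_0$, which is exactly what forces the "no choice" assertion in the paper's hint.
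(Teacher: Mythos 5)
Your proposal is correct and takes essentially the same approach as the paper: the paper's entire justification is the one-sentence observation that a pointed map out of \( S \cup \{*\} \) is determined by its restriction to \( S \) because the image of the base point is forced, which is exactly the core of your bijection \( \Phi \). You simply spell out the routine details (functoriality of \( (-)_+ \), mutual inverses, naturality) that the paper leaves implicit.
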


\begin{proof}[Proof of Lemma~\ref{obs.image_inclusion}]
The forgetful functor \( \VectF \to \Set \) naturally factors as follows:
\[ \begin{tikzcd}
\VectF \ar[rrrr, bend left, "\funct{forget}"] \ar[rr, "\funct{forget}^\mathrm{pt}"] & & \pSet \ar[rr, "\funct{forget-base}"] & & \Set
\end{tikzcd} \]
In particular, by uniqueness of (left) adjoints, we have a natural factorization of functors from \( \Set \) to \( \VectF \):
\[ \funct{free} = \funct{free}^\mathrm{pt} \circ (- \cup \{ * \}). \qedhere \]
\end{proof}

\begin{lemma} \label{lem.const_splits_off}
We have a natural isomorphism of functors
\[ \funct{free} \circ \funct{forget-base} \iso \funct{free}^\mathrm{pt} \oplus \funct{const}_F \colon \pSet \to \VectF, \]
where $\funct{const}_F$ is the constant functor sending any pointed set to the field $F$ and any morphism to the identity.
\end{lemma}

\begin{proof}
For a pointed set \( S \) we have the short exact sequence
\[ \begin{tikzcd}
0 \ar[r] & F \ar[r] & \funct{free}(\funct{forget-base}(S)) \ar[r] \ar[l, bend right, dashed] & \funct{free}^\mathrm{pt}(S) \ar[r] & 0, 
\end{tikzcd} \]
where the first map sends \( 1 \) to the base point of $S$, and the second map sends the base point to $0$ and any other element of \( S \) to itself. Note that this short exact sequence is functorial in \( S \). There is a natural splitting given by sending any linear combination \( \sum_{s \in S} f_s s \) to \( \sum_{s \in S} f_s \).
\end{proof}

\begin{remark}
In the previous proof, one might first be tempted to consider the splitting given by sending the base point to 1 and all other elements of \( S \) to 0. However, that splitting is \emph{not} natural. (The problem is that morphisms may send other elements of a pointed set to the base point.)
\end{remark}

\begin{proposition} \label{prop.same_add_image}
Let $X$ be a small category and \( R \in \rep( X, \pSet ) \). Then
\[ \funct{free} \circ \funct{forget-base} \circ R  \iso \funct{free}^\mathrm{pt} \circ R \oplus F_X. \]
In particular, the additive images of \( \funct{free}_* \) and \( \funct{free}^\mathrm{pt}_* \) coincide. 
\end{proposition}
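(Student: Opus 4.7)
The plan is to deduce the first claim directly from Lemma~\ref{lem.const_splits_off}, and then use the first claim together with Lemma~\ref{obs.image_inclusion} to handle the "in particular" part.

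For the isomorphism $\funct{free}(\funct{forget}(R)) \iso \funct{free}_*(R) \oplus F_X$, the key point is that Lemma~\ref{lem.const_splits_off} provides this isomorphism naturally as functors $\pSet \to \Vect$. Given $R \in \rep(X, \pSet)$, which is a functor $R \colon X \to \pSet$, I would post-compose with the three functors $\funct{free} \circ \funct{forget}$, $\funct{free}_*$, and the constant functor (extracting $F_X$), and invoke naturality of the isomorphism in Lemma~\ref{lem.const_splits_off} to conclude that the objectwise isomorphisms assemble into an isomorphism in $\rep(X, \Vect)$. Concretely, for every morphism $\alpha \colon x \to y$ in $X$, the two resulting squares (one for each pointed-set map $R(\alpha)$) commute by naturality, so the decomposition is compatible with the $X$-action.

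For the "in particular" statement, I would argue both inclusions separately. The inclusion of the additive image of $\funct{free}$ into that of $\funct{free}_*$ is immediate from Lemma~\ref{obs.image_inclusion}, since that lemma tells us the essential image of $\funct{free}$ is contained in the essential image of $\funct{free}_*$, and this inclusion is preserved under taking direct sums and summands. For the reverse inclusion, suppose $M$ lies in the additive image of $\funct{free}_*$, so that $M \oplus M' \iso \funct{free}_*(R)$ for some $R \in \rep(X, \pSet)$. Then by the first part of the proposition,
\[ M \oplus M' \oplus F_X \iso \funct{free}_*(R) \oplus F_X \iso \funct{free}(\funct{forget}(R)), \]
which exhibits $M$ as a summand of something in the essential image of $\funct{free}$.

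I do not anticipate a genuine obstacle here: the work has already been done in Lemma~\ref{lem.const_splits_off} and Lemma~\ref{obs.image_inclusion}, and the main thing to verify carefully is that the splitting in Lemma~\ref{lem.const_splits_off} is truly natural (which the proof of that lemma addressed via the averaging/summation splitting $\sum f_s s \mapsto \sum f_s$, as opposed to the non-natural "pick the basepoint" splitting flagged in the subsequent remark).
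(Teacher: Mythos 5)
Your proposal is correct and follows the same route as the paper: the isomorphism comes from the naturality of the splitting in Lemma~\ref{lem.const_splits_off}, one inclusion of additive images from Lemma~\ref{obs.image_inclusion}, and the other from the first statement exhibiting any summand of $\funct{free}_*(R)$ as a summand of $\funct{free}(\funct{forget}(R))$. You have merely spelled out the details that the paper leaves implicit.
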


\begin{proof}
The first statement is an immediate consequence of Lemma~\ref{lem.const_splits_off}. It follows that any representation in the image of \( \funct{free}^\mathrm{pt}_* \) is a direct summand of a representation in the image of \( \funct{free}_* \). The equality of the additive images now follows together with Lemma~\ref{obs.image_inclusion}.
\end{proof}
As with $\funct{free}_*$, the essential image of $\funct{free}^\mathrm{pt}_*$ can also be easily described. 
\begin{proposition}
\label{prop:mult-basis}
Let $X$ be a small category and \( R \in \rep(X, \VectF) \). Then \( R \) lies in the essential image of \( \funct{free}^\mathrm{pt}_* \) if and only if \( R \) has a \emph{coherent basis}, that is, a basis such that any structure morphism maps any basis vector to a basis vector or to zero.
\end{proposition}
\begin{proof}Immediate from the definition of $\funct{free}^\mathrm{pt}_*$.
\end{proof}

\begin{corollary}\label{cor.const_essimg}
Any indicator representation is in the essential image of $\funct{free}^\mathrm{pt}_*$.
\end{corollary}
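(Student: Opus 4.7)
The strategy is to invoke Proposition~\ref{prop:mult-basis}, which characterizes the essential image of \( \funct{free}_* \) as precisely those representations admitting a multiplicative basis. Given an indicator representation \( R \) determined by a subset \( S \subseteq \operatorname{Obj}(X) \), the obvious candidate basis presents itself: at each object \( x \in S \), take the single basis vector \( 1_x \in F = R(x) \); at each \( x \notin S \) the space \( R(x) = 0 \) requires no basis vectors. The union \( \{ 1_x \mid x \in S \} \) assembles into a basis of \( R \) in the sense required by Proposition~\ref{prop:mult-basis} (one such set at each object).

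To verify multiplicativity, I would consider an arbitrary arrow \( f \colon x \to y \) in \( X \). If \( x, y \in S \), then by the definition of an indicator representation \( R(f) = \operatorname{id}_F \), which sends the basis vector \( 1_x \) to the basis vector \( 1_y \). In the remaining cases, \( R(f) = 0 \) maps every basis vector at \( x \) to the zero vector. Thus every arrow sends each basis vector either to another basis vector or to zero, which is exactly the multiplicativity condition. Hence \( R \) lies in the essential image of \( \funct{free}_* \).

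If one prefers a more concrete construction, an explicit pointed-set preimage can be written down directly: set \( P(x) = \{ *, p_x \} \) for \( x \in S \) and \( P(x) = \{ * \} \) otherwise, and for \( f \colon x \to y \) let \( P(f) \) send \( p_x \mapsto p_y \) when \( x, y \in S \) and send everything to the base point otherwise. Functoriality of \( P \) is inherited from the same compatibility constraints that are implicit in \( R \) being a well-defined functor (in particular, any ``intermediate'' object lying on a composable path between two elements of \( S \) whose composite carries the identity must itself belong to \( S \)), and one checks \( \funct{free}_*(P) \cong R \) by inspection. There is no substantive obstacle in either approach: with Proposition~\ref{prop:mult-basis} in hand, the content of the corollary is essentially tautological.
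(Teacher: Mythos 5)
Your argument is correct and matches the paper's intended reasoning: the corollary is stated as an immediate consequence of Proposition~\ref{prop:mult-basis}, and your verification that the canonical basis of an indicator representation is multiplicative is exactly the content being left implicit. The explicit pointed-set preimage you sketch is a fine equivalent alternative, but nothing beyond the first paragraph is needed.
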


\begin{remark}
Propositions~\ref{prop.same_add_image} and \ref{prop:mult-basis} first appeared in  \cite{brodzki2020complexity} (Theorem 5.8 and Lemma 4.3, respectively) for finite-dimensional representations of finite partially ordered sets. 
\end{remark}

\begin{remark} 
Note that the essential image of $\funct{free}^\mathrm{pt}_*$ is larger than that of $\funct{free}_*$ but in many cases smaller than the additive image.
For example, if 
\end{remark}

\subsection{Field extensions}
One may ask how changing the field we consider representations over affects realizability. The following result shows that field extensions do not change the realizability of a given representation.

\begin{theorem}\label{thm.fieldext}
Let $R\in \rep(X, \vectF)$ where $X$ is a finite category, and let $F'$ be any field extension of $F$. Then $R$ is additively $\Set$-realizable if and only if $R\otimes_F F' \in \rep(X, \vect_{F'})$ is additively $\Set$-realizable.
\end{theorem}
\begin{proof}
Observe that we have the following commutative diagram
\begin{equation}
\label{eq.free}
\begin{tikzcd}
\rep(X,\set) \ar[r, "\funct{free}_{F'}"]\ar[dr, swap, "\funct{free}_F"] & \rep(X,\vect_{F'})\\
& \rep(X,\vectF) \ar[u, swap, "-\otimes_F F'"]
\end{tikzcd}
\end{equation}
Note that \( - \otimes_F F' \) commutes with direct sums. Hence, if $R$ is a direct summand of $\funct{free}_F \circ S$ for some $S\in \rep(X, \set)$, then \( R \otimes_F F' \) is a direct summand of \( (\funct{free}_F \circ S) \otimes_F F' \). By the commutative triangle above, the latter is isomorphic to \( \funct{free}_{F'} \circ S \).
We conclude that $R\otimes_F F'$ is additively $\Set$-realizable. 

To prove the converse, let $B$ denote any basis for $F'$ as a vector space over $F$. Then, for any $R\in \rep(X, \vectF)$, we have 
\begin{equation}
\label{eq.fieldext}
\rho(R\otimes_F F') \cong \bigoplus_{b\in B}R\in \rep(X, \VectF),
\end{equation}
where $\rho$ denotes restriction of scalars along the inclusion $F\hookrightarrow F'$. Assume that $R \otimes_F F'$ is a direct summand of $\funct{free}_{F'} \circ S$ for some $S\in (X, \set)$. Then, since \( \rho \) is an additive functor, \( \bigoplus_{b\in B} R \cong \rho(R\otimes_F F') \) is a direct summand of
\[ \rho(\funct{free}_{F'} \circ S) \overset{(\ref{eq.free})}{\cong} \rho(\funct{free}_F \circ S\otimes_F F') \overset{(\ref{eq.fieldext})}\cong \bigoplus_{b\in B} \funct{free}_F \circ S \cong \funct{free}_F \circ \left(\bigsqcup_{b\in B} S\right). \]
We conclude that $\bigoplus_{b\in B} R$ --- and therefore $R$ --- is in the additive image of $(\funct{free}_F)_* \colon \rep(X, \Set)\to \rep(X, \VectF)$. Since $X$ was assumed to be finite, the result follows from Proposition~\ref{prop.repinfsets}.
\end{proof}

\section{Determining membership in the additive image}
\label{sec.algo}
In this section we describe an algorithm for determining whether a linear representation is additively \(\Set\)-realizable. We begin with the following lemma. Recall that a morphism \(e \colon X \to Y\) is a \emph{split epimorphism} if there exists \(s \colon Y \to X\) with \(e \circ s = \mathrm{id}_Y\), and that an object \(X\) is in the \emph{epimorphic image} of \(Y\) if there exists an epimorphism \(s \colon Y \to X\). Note that every split epimorphism is in particular an epimorphism.

\begin{lemma} \label{lemma.split_epi_counit}
Let \( \mathtt F \dashv \mathtt G \) be an adjoint pair of functors, between two categories \( \mathcal{C} \) and \( \mathcal{D} \). An object \( D \in \mathcal{D} \) is the (split) epimorphic image of some object in the image of \( \mathtt{F} \) if and only the counit \( \mathtt F \mathtt G D \to D \) is a (split) epimorphism.
\end{lemma}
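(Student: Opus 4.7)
The plan is to treat both directions of the biconditional separately and note that this is a general formal consequence of adjunction (the counit at $D$ being universal among maps from objects in the image of $\mathtt{F}$ to $D$). I will interpret the statement as relating a single object $D \in \mathcal{D}$ to the counit $\epsilon_D \colon \mathtt{F}\mathtt{G}D \to D$ (the paper's use of $A$ in place of $D$ appearing to be a typo).

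The "if" direction is immediate: assuming $\epsilon_D$ is a (split) epimorphism, the object $\mathtt{F}\mathtt{G}D$ lies in the image of $\mathtt{F}$ and maps onto $D$ by a (split) epimorphism, as desired.

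For the "only if" direction I would use the fundamental property of the counit: any morphism $p \colon \mathtt{F}(C) \to D$ out of an object in the image of $\mathtt{F}$ factors through $\epsilon_D$ as
\[ p = \epsilon_D \circ \mathtt{F}(\bar p), \]
where $\bar p \colon C \to \mathtt{G}D$ is the adjunct of $p$; this is just one of the triangle identities. Now if $p$ is a (split) epimorphism, then so is $\epsilon_D$: in the split case, a section $s$ of $p$ gives $\mathtt{F}(\bar p) \circ s$ as a section of $\epsilon_D$, and in the non-split case one invokes the elementary fact that if $g \circ f$ is an epimorphism then so is $g$.

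I do not anticipate any real obstacle; the entire argument is a textbook manipulation of the unit/counit formalism. The only point that requires a tiny bit of care is ensuring that one handles the split and non-split cases uniformly, which is achieved by the same factorization $p = \epsilon_D \circ \mathtt{F}(\bar p)$ in both.
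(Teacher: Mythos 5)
Your proof is correct and follows essentially the same route as the paper: the paper phrases the ``only if'' direction via the naturality square for the counit together with the fact that $\operatorname{counit}_{\mathtt{F}C}$ is split epi with section $\mathtt{F}(\eta_C)$, which amounts to exactly your factorization $p = \epsilon_D \circ \mathtt{F}(\bar p)$. No gaps.
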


\begin{proof}
The ``if''-part is clear.

So assume there is a (split) epimorphism \( e \colon \mathtt{F}C \to D \). By naturality of the counit the following square commutes.
\[ \begin{tikzcd}
\mathtt{F} \mathtt{G} \mathtt{F} C \ar[r,"\mathtt{F}\mathtt{G}e"] \ar[d,swap,"\operatorname{counit}_{\mathtt{F}C}"] & \mathtt{F} \mathtt{G} D \ar[d,"\operatorname{counit}_{D}"]  \\
\mathtt{F} C  \ar[r,"e"] & D
\end{tikzcd} \]
Note that the counit on \( \mathtt{F}C \) is always split epi (a splitting being given by \( \mathtt{F} \) of the unit on \( C \)). It follows that the composition from the left upper to the right lower corner of the diagram is (split) epi, and hence so is the counit on \( D \).
\end{proof}

The next lemma shows that when the second category is additive then the condition described in Lemma~\ref{lemma.split_epi_counit} propergates to sums; see also Remark~\ref{rem.addimage}.

\begin{lemma} \label{lemma:sums_ok_for_adj}
Let \( \mathtt F \dashv \mathtt G \) be an adjoint pair of functors, between two categories \( \mathcal{C} \) and \( \mathcal{A} \), where \( \mathcal{A} \) is additive. Let \( A_1 \) and \( A_2 \) be objects in \( \mathcal{A} \) such that the counit morphims \( \mathtt F \mathtt G A_i \to A_i \) are split epimorphisms. Then so is the counit morphim \( \mathtt F \mathtt G (A_1 \oplus A_2) \to A_1 \oplus A_2 \).
\end{lemma}

\begin{proof}
We consider the diagram
\[ \begin{tikzcd}[column sep=30mm,row sep=15mm,ampersand replacement=\&]
\mathtt F \mathtt G A_1 \oplus \mathtt F \mathtt G A_2 \ar[r,"{\left[ \begin{smallmatrix} \operatorname{counit}_{A_1} & 0 \\ 0 & \operatorname{counit}_{A_2} \end{smallmatrix} \right]}"] \ar[d,swap,"{\left[ \begin{smallmatrix} \mathtt F \mathtt G (A_1 \to A_1 \oplus A_2) \\ \mathtt F \mathtt G (A_2 \to A_1 \oplus A_2) \end{smallmatrix} \right]}"] \& A_1 \oplus A_2 \\
\mathtt F \mathtt G (A_1 \oplus A_2) \ar[ru,swap,"\operatorname{counit}_{A_1 \oplus A_2}"]
\end{tikzcd} \]
This diagram commutes since the counit is a natural transformation.

Our assumption implies that the top arrow is a split epimorphism. Then it follows that also the diagonal arrow needs to be a split epimorphism.
\end{proof}

Combining the above lemmas, we get the following result.

\begin{corollary} \label{cor.adj_summand}
Let \( \mathtt F \dashv \mathtt G \) be an adjoint pair of functors, between two categories \( \mathcal{C} \) and \( \mathcal{A} \), where \( \mathcal{A} \) is additive. An object \( A \in \mathcal{A} \) is in the additive image of \( \mathtt F \) if and only if the counit \( \mathtt F \mathtt G A \to A \) is a split epimorphism.

In particular, it is in the additive image only if it is a direct summand of some object in the image.
\end{corollary}

\begin{proof}
Recall that the additive image consists of all finite direct sums of summands of objects in the image. Considering first a summand of an object in the image, the claim follows from Lemma~\ref{lemma.split_epi_counit}. Now, for finite sums of such summands the claim follows from Lemma~\ref{lemma:sums_ok_for_adj}. 
\end{proof}

\begin{corollary} \label{cor.criterion_for_add_image}
Let \( X \) be a small category, and \( R \in \rep( X, \VectF) \). Then \( R \) is in the additive image of the functor \( \funct{free}_* \colon \rep( X, \Set ) \to \rep( X, \VectF) \) if and only if the natural epimorphism \( \funct{free}^\mathrm{pt} \circ \funct{forget}^\mathrm{pt} \circ R \to R \) splits.
\end{corollary}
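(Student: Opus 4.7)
The plan is to apply Corollary~\ref{lem.adj_summand} to the natural adjunction between pointed sets and vector spaces. First I would recall that $\funct{free}_* \colon \pSet \to \Vect$ is left adjoint to the forgetful functor $\funct{forget} \colon \Vect \to \pSet$ that sends a vector space to its underlying set pointed at $0$. This adjunction extends pointwise to an adjunction between the functor categories $\rep(X, \pSet)$ and $\rep(X, \Vect)$, with unit and counit obtained componentwise from the underlying adjunction.

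Next I would verify that the counit at a representation $R \in \rep(X, \Vect)$ is precisely the natural surjection described in the statement. By construction, its component at $x \in X$ is the linear map $\funct{free}_*(R(x)_{\text{as pointed set}}) \to R(x)$ which sends the formal basis vector associated to $r \in R(x)$ to $r$ itself (with the base point, namely $0 \in R(x)$, sent to $0$). This is exactly the map in the corollary.

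Finally, since $\rep(X, \Vect)$ is an additive category, Corollary~\ref{lem.adj_summand} (applied to this adjoint pair) yields that $R$ is in the additive image of $\funct{free}_*$ if and only if this counit is a split epimorphism, which is what we wanted to prove.

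I do not expect any serious obstacle: the result is essentially a direct specialization of the abstract adjunction statement to the case at hand. The only point worth being explicit about is that the counit really coincides with the ``natural epimorphism $\funct{free}_*(R_{\text{as pointed set}}) \to R$'' mentioned in the statement, but this is a routine unwinding of the definition of the free-forgetful adjunction between $\pSet$ and $\Vect$.
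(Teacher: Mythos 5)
Your proposal is correct and follows exactly the route the paper intends: \( \funct{free}_* \) is left adjoint to the forgetful functor (pointwise, hence also between the functor categories), the counit at \( R \) is the stated natural epimorphism, and Corollary~\ref{lem.adj_summand} applies since \( \rep(X,\Vect) \) is additive. No gaps.
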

This corollary shows that checking if a representation is additively $\Set$-realizable amounts to a (possibly large) linear algebra problem. We consider the running time in Section~\ref{subsect:runtime}.

\begin{example}\label{ex.D4}
Consider the following indecomposable representation $R$,
\[ \begin{tikzcd}[sep=10mm]
& \Z/2\Z \ar{d}{\begin{bmatrix} 1 \\ 0 \end{bmatrix}} & \\
\Z/2\Z  \ar{r}{\begin{bmatrix} 0 \\ 1 \end{bmatrix}} & \Z/2\Z\oplus \Z/2\Z  & \Z/2\Z  \ar{l}[swap]{\begin{bmatrix} 1 \\ 1 \end{bmatrix}} 
\end{tikzcd} \]
for which $\funct{free}^\mathrm{pt} \circ \funct{forget}^\mathrm{pt} \circ R$ equals
\[ \begin{tikzcd}[sep=10mm]
& \Z/2\Z \ar{d}{\begin{bmatrix} 1 \\ 0 \\ 0 \end{bmatrix}} & \\
\Z/2\Z  \ar{r}{\begin{bmatrix} 0 \\ 1 \\ 0 \end{bmatrix}} & (\Z/2\Z)^3  & \Z/2\Z \ar{l}[swap]{\begin{bmatrix} 0 \\ 0 \\ 1 \end{bmatrix}}.
\end{tikzcd} \]
This representation is a direct sum of three indicator representations. We conclude from Corollary~\ref{cor.criterion_for_add_image} that $R$ is not additively $\Set$-realizable. 
\end{example}

In Example~\ref{example:D4_by_G-set} we will see that the claim holds for any finite field, and in Example~\ref{ex.D4_inward} we give a more systematic argument for this fact.

\begin{example}
\label{ex.7star}
Consider the following representation $M$ of the 7-star quiver:
\[
\begin{tikzpicture}[scale=0.7]
\node at (0,0) (o) {$F^3$};
 \foreach \X [count=\Y]in {1,2,3,4,5,6,7}
  \draw[->,shorten <=10pt] (0,0) -- node[auto] {$V_{\X}$}(135-\Y*45:pi) 
   node[label=135-45*\Y: $F^2$]{};
\end{tikzpicture},
\]
where
\begin{align*}
V_1 = \begin{bmatrix} 1 & 1 & 0 \\ 0 & 1 & 1 \end{bmatrix} & & V_2 = \begin{bmatrix} 1 & 0 & 0 \\ 0 & 1 & 0 \end{bmatrix} & & V_3 = \begin{bmatrix} 1 & 0 & 0 \\ 0 & 1 & 1\end{bmatrix} & & V_4 = \begin{bmatrix} 1 & 0 & 1 \\ 0 & 1 & 0 \end{bmatrix}\\  
V_5 = \begin{bmatrix} 0 & 1 & 0 \\ 0 & 0 & 1 \end{bmatrix} & & V_6 = \begin{bmatrix} 1 & 0 & 0 \\ 0 & 0 & 1 \end{bmatrix} & & V_7  = \begin{bmatrix} 1 & 1 & 0 \\ 0 & 0 & 1 \end{bmatrix} & ~ &
\end{align*}
For $F=\Z/2\Z$, these matrices constitute the set of full rank $2\times 3$ matrices up to row equivalence. Note that each matrix in the equivalence class of $V_1$ has one column with more than one non-zero entry. Furthermore, if $A$ is any invertible $3\times 3$ matrix, then precisely one of the matrices $\{V_iA\}_{i=1}^7$ will be row-equivalent to $V_1$, because the matrices $\{V_i\}_{i}$ generate all row equivelence classes of full rank $2\times 3$ matrices.

We conclude that any basis of $F^3$ contains a vector that is mapped to a vector with more than one non-zero entry by one of the $V_i$, for any basis of $F^2$.
In other words, $M$ does not have a coherent basis, and, from Proposition~\ref{prop:mult-basis}, we conclude that $M$ is not in the essential image of $\funct{free}^\mathrm{pt}_*$. More interestingly, one can check using computer algebra software\footnote{\url{https://github.com/mbotnan/set-realizable/}} that the natural epimorphism $\funct{free}^\mathrm{pt} \circ \funct{forget}^\mathrm{pt} \circ M \to M$ does not split. In particular, $M$ is not additively $\Set$-realizable. We do not know of a more direct argument for coming to this conclusion. Note that the natural epimorphism \emph{does} split when working over $F=\Z/3\Z$.
\end{example}

\subsection{Running time for finite categories} \label{subsect:runtime}
Let $X$ be a finite category with objects $\operatorname{Obj}(X)$, morphisms $\operatorname{Mor}(X)$, and let $|X|$ denote the total number of morphisms in $X$. We shall assume that $F$ is finite field with $|F|$ elements.

For $R\colon X\to \vectF$ and $v\in \operatorname{Obj}(X)$, we let $d_v = \dim R(v)$ and $d = \sum_{v\in \operatorname{Obj}(X)} d_v$. Let $e\colon \funct{free}^\mathrm{pt} \circ \funct{forget}^\mathrm{pt} \circ R \to R$ denote the counit from Corollary~\ref{cor.criterion_for_add_image}.

Determining if $R$ is additively $\Set$-realizable amounts to probing the existence of an $f\colon R\to \funct{free}^\mathrm{pt} \circ \funct{forget}^\mathrm{pt} \circ R$ such that $e \circ f = \operatorname{id}_R$.
Fixing a basis for each $R(v)$ and $\funct{free}^\mathrm{pt} \circ \funct{forget}^\mathrm{pt} \circ R(v)$, this reduces to a simple (albeit large) linear algebra problem. For each $v$ and each morphism $v\to w$ we let $E_v$, $L_{(v,w)}$ and $U_{(v,w)}$ denote the matrix representations of $e(v)$, $R(v\to w)$, and $\funct{free}^\mathrm{pt} \circ \funct{forget}^\mathrm{pt} \circ R(v\to w)$, respectively. Note that these matrices are of dimensions  $d_v\times (|F|^{d_v}-1)$, $d_w\times d_v$, and $(|F|^{d_w}-1)\times (|F|^{d_v}-1)$. A right inverse to $e$ exists if and only if there exists a family $\{G_v\}_{v\in \operatorname{Obj}(X)}$ of $(|F|^{d_v}-1)\times d_v$-matrices satisfying
\begin{itemize}
\item $E_vG_v = I_{d_v} (\text{the $d_v\times d_v$ identity matrix})$ for each $v\in \operatorname{Obj}(X)$, 
\item $G_wL_{(v,w)} = U_{(v,w)}G_v$ for all (non-identity) morphisms $v\to w\in \operatorname{Mor}(X)$. 
\end{itemize} 
We conclude the following.
\begin{proposition}\label{prop.sys_eq}
Let $R\in \rep(X,\vectF)$. One can check if $R$ is additively $\Set$-realizable by solving a linear system over $F$ with
\begin{align*}
\#{\rm indeterminants} &= \sum_{v\in \operatorname{Obj}(X)}d_v(|F|^{d_v}-1)  \\
\#{\rm equations} &= \sum_{v\in \operatorname{Obj}(X)}d_v^2 + \sum_{\substack{v\to w \in \operatorname{Mor}(X)\\ \text{non-identity}}} d_v(|F|^{d_w}-1).
\end{align*}
\end{proposition}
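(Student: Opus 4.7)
The plan is to directly translate the splitting condition from Corollary~\ref{cor.criterion_for_add_image} into a system of linear equations by fixing bases everywhere and reading off dimensions. First I would fix, for each $v \in \operatorname{Obj}(X)$, a basis of $R(v)$ of size $d_v$, and also the canonical basis of $\funct{free}_*\funct{forget}(R)(v)$, which is indexed by the \emph{nonzero} elements of the underlying set of $R(v)$. Since that underlying set has $|F|^{d_v}$ elements (one of which is the basepoint $0$), this basis has size $|F|^{d_v} - 1$. With these conventions, $E_v$ is the $d_v \times (|F|^{d_v}-1)$ matrix of the counit at $v$ (each column is the nonzero vector indexing that basis element), and for a morphism $v \to w$, $L_{(v,w)}$ and $U_{(v,w)}$ are the matrices of $R(v \to w)$ and $\funct{free}_*\funct{forget}R(v\to w)$ in the chosen bases, with the stated dimensions.

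Next I would invoke Corollary~\ref{cor.criterion_for_add_image}: $R$ is additively $\Set$-realizable if and only if there is a natural transformation $f \colon R \to \funct{free}_*\funct{forget}(R)$ with $e \circ f = \operatorname{id}_R$. At each vertex $v$, $f_v$ is determined by an unknown $(|F|^{d_v}-1) \times d_v$ matrix $G_v$, contributing the first summand $\sum_v d_v(|F|^{d_v}-1)$ to the count of indeterminates. The two conditions translate as follows: the sectioning condition $e \circ f = \operatorname{id}$ becomes $E_v G_v = I_{d_v}$, contributing $d_v^2$ linear equations per vertex; naturality $f_w \circ R(v\to w) = \funct{free}_*\funct{forget}R(v\to w) \circ f_v$ becomes $G_w L_{(v,w)} = U_{(v,w)} G_v$, a matrix identity of format $(|F|^{d_w}-1) \times d_v$ contributing $d_v(|F|^{d_w}-1)$ equations per non-identity morphism. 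Identity morphisms give trivially satisfied equations and can be omitted without loss. Summing over $v$ and over $v \to w \in \operatorname{Mor}(X)$ yields exactly the counts in the statement.

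The only mildly subtle point, and the part I would be most careful about, is the dimension count $|F|^{d_v} - 1$ for $\funct{free}_*\funct{forget}(R)(v)$: one needs to remember that $\funct{forget}$ lands in $\pSet$ with the zero vector as basepoint, and that $\funct{free}_*$ freely adjoins a vector for each \emph{non-basepoint} element. I would double-check this by recalling that $\funct{free}_*$ is left adjoint to the forgetful functor $\Vect \to \pSet$, whose value on $V$ has $|V|$ elements with basepoint $0$. Everything else is bookkeeping: collecting the unknowns into one big vector over $F$ and the equations into one matrix over $F$ gives a single linear system whose solvability is equivalent, by Corollary~\ref{cor.criterion_for_add_image}, to $R$ being additively $\Set$-realizable. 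No obstacle beyond getting the dimensions straight is anticipated.
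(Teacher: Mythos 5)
Your proposal is correct and follows essentially the same route as the paper: fix bases, encode the section $f$ of the counit by unknown matrices $G_v$, and translate the splitting condition $E_vG_v=I_{d_v}$ and the naturality conditions $G_wL_{(v,w)}=U_{(v,w)}G_v$ into the stated linear system. Your dimension bookkeeping (in particular the $|F|^{d_v}-1$ count for $\funct{free}_*\funct{forget}(R)(v)$ and the $d_v\times(|F|^{d_v}-1)$ format of $E_v$) matches the paper's intent.
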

\begin{corollary}
\label{cor.algo}
Using Gaussian elimination, one can determine if $R\in \rep(X,\vectF)$ is additively $\Set$-realizable in $\mathcal{O}\left(m^3\cdot |X|^3\cdot |F|^{3m} \right)$ arithmetic operations, where $m = \max_{v\in \operatorname{Obj}(X)} \dim R(v)$.
\end{corollary}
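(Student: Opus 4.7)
The corollary is a direct complexity estimate based on Proposition~\ref{prop.sys_eq}, so the proof plan is essentially to count the size of the linear system produced there and then invoke the standard cubic bound for Gaussian elimination. No new conceptual ingredient is required; the only care needed is in the bookkeeping.

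The plan is to first apply Proposition~\ref{prop.sys_eq} to reduce the question ``is $R$ additively $\Set$-realizable?'' to deciding whether a concrete system of $F$-linear equations in the entries of the matrices $G_v$ has a solution. Let $n = |\operatorname{Obj}(X)|$ and $m = \max_v d_v$. Since every object carries an identity morphism, $n \le |X|$. I would then bound the number of indeterminants by
\[ \sum_{v\in\operatorname{Obj}(X)} d_v(|F|^{d_v}-1) \le n \cdot m \cdot |F|^m \le |X|\cdot m \cdot |F|^m, \]
and similarly bound the number of equations by
\[ \sum_{v\in\operatorname{Obj}(X)} d_v^2 + \sum_{v\to w\in\operatorname{Mor}(X)} d_v(|F|^{d_w}-1) \le n \cdot m^2 + |X|\cdot m \cdot |F|^m = O(|X|\cdot m \cdot |F|^m), \]
where the second summand dominates because $|F|^m \ge 1$.

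Next I would recall that Gaussian elimination on a linear system with $N$ unknowns and $N$ equations (or, more generally, $O(N)$ of each) over any field can be carried out in $O(N^3)$ arithmetic operations. Plugging in $N = O(|X|\cdot m \cdot |F|^m)$ yields
\[ O\bigl((|X|\cdot m\cdot |F|^m)^3\bigr) = O\bigl(m^3 \cdot |X|^3 \cdot |F|^{3m}\bigr) \]
arithmetic operations, which is the claimed bound.

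Finally I would verify correctness: by Corollary~\ref{cor.criterion_for_add_image}, $R$ is additively $\Set$-realizable if and only if the counit $\funct{free}_*\funct{forget}(R)\to R$ splits, which is exactly the solvability of the system produced by Proposition~\ref{prop.sys_eq}; hence the algorithm (``write down the system, run Gaussian elimination, check for a solution'') correctly decides additive $\Set$-realizability within the stated running time. The only place one needs to be cautious is the implicit assumption $n \le |X|$; if $X$ has isolated objects and one prefers to count only non-identity morphisms, then $|X|$ in the final bound should be interpreted as $\max(n, \#\text{arrows})$, which does not affect the asymptotics. I do not foresee a main obstacle; this is a routine counting and complexity argument layered on top of the preceding proposition.
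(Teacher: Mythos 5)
Your proposal is correct and follows essentially the same route as the paper: bound the number of indeterminants and equations from Proposition~\ref{prop.sys_eq} by $O(|X|\cdot m\cdot |F|^m)$ each (using that $|\operatorname{Obj}(X)|\le|X|$ and that the $d_v^2$ terms are dominated), then apply the cubic bound for Gaussian elimination. The extra remarks on correctness via Corollary~\ref{cor.criterion_for_add_image} and on counting identities are fine but not needed beyond what the paper already records.
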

\begin{proof}
Observe that 
\[ \sum_{v\in \operatorname{Obj}(X)}d_v(|F|^{d_v}-1) \leq |X|\cdot m\cdot |F|^m ,\]
and 
\[\sum_{v\in \operatorname{Obj}(X)}d_v^2 + \sum_{\substack{v\to w \in \operatorname{Mor}(X)\\ \text{non-identity}}} d_v(|F|^{d_w}-1) \leq |X|(m^2+m\cdot |F|^m).\]
A system of $E$ equations in $V$ variables, can be solved by Gaussian elimination in $\mathcal{O}(EV^2)$. The result is now immediate. 
\end{proof}

An implementation of this algorithm and several examples (including Example~\ref{ex.7star}) are available in our repository\footnote{\url{https://github.com/mbotnan/set-realizable/}}.
\begin{remark}
\label{remark.minimalarrows}
It is not necessary to check the commutativity condition \( G_wL_{(v,w)} = U_{(v,w)}G_v \) for all morphisms: Given a minimal generating set $S$ for $\operatorname{Mor}(X)$ (that is, any morphism in $\operatorname{Mor}(X)$ can be written as the composition of morphisms in $S$), it suffices to check commutativity for the morphisms in \( S \); commutativity for all other morphisms then holds automatically. It follows that one may replace $|X|$ by $|S|$ in the formula of Corollary~\ref{cor.algo}.

This means that if $X$ is a quiver, it suffices to consider the arrows in $X$, and if $X$ is a poset, then one only has to check commutativity for arrows in the Hasse diagram.
\end{remark} 

\section{Scalars of finite order}
\label{sec:scalarsfiniteorder}
We observed (in Proposition~\ref{prop:mult-basis}) that a representation lies in the essential image of \( \funct{free}^\mathrm{pt}_* \) if and only if it has a coherent basis, that is a basis such that any structure map applied to any basis vector gives either a basis vector or zero.

In this section we will observe that when considering the additive image, we can relax the assumption. More precisely, we show that if any basis vector is mapped to a scalar multiple of a basis vector, and all scalars involved are roots of unity, then the representation is already additively \( \Set \)-realizable. This result holds for any field, however it is particularly powerful for finite fields, since in that case any non-zero scalar is a root of unity.

We give two independent proofs of this fact, the first being a more explicit argument using matrices and the second a more abstract argument using group actions. In the final subsection of this section, we observe that the discussion here leads to a (slightly) improved version of the algorithm in Section~\ref{sec.algo}.

In Section~\ref{subsec:Atilde} (in particular Theorem~\ref{thm.Ainf}) we will see examples showing that the assumption of scalars being roots of unity is necessary, so that the result in this section is in a sense as strong as possible.

\subsection{Removing scalars via Vandermonde matrices}
\label{sec.vandermonde}
\begin{theorem} \label{thm.isinduced2}
Let $X$ be a small category and \( R\in (X, \vectF) \), and assume that $F$ has a primitive $n$-th root of unity for some $n$. Then, $R$ lies in the additive image of $\funct{free}^\mathrm{pt}_*$ if $R$ has a basis such that any arrow maps any basis vector to a multiple $\lambda$ of a basis vector, where $\lambda$ is either an $n$-th root of unity or 0. 
\end{theorem}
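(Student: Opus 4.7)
The plan is to exhibit $R$ as a direct summand of an auxiliary representation $R'$ that manifestly has a multiplicative basis, so that $R'$ lies in the essential image of $\funct{free}_*$ by Proposition~\ref{prop:mult-basis}. The idea is to split each basis vector of $R$ into $n$ copies indexed by $\mathbb{Z}/n\mathbb{Z}$, so that multiplication by $\zeta^k$ becomes the index shift $i \mapsto i + k$, which is a set map rather than a strictly linear one.

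Fix a primitive $n$-th root of unity $\zeta \in F$; its existence forces $\gcd(n, \operatorname{char} F) = 1$ and hence $n$ is invertible in $F$. Letting $B_x$ denote the given basis of $R(x)$, I would first define $R' \in \rep(X, \vect)$ by taking $R'(x)$ to have basis $B_x \times \mathbb{Z}/n\mathbb{Z}$, with structure maps
\[
R'(\alpha)(b, i) = \begin{cases} (b', i + k) & \text{if } R(\alpha)(b) = \zeta^k b', \\ 0 & \text{if } R(\alpha)(b) = 0, \end{cases}
\]
for each arrow $\alpha \colon x \to y$. Functoriality of $R'$ is inherited from that of $R$, since whenever two composites agree in $R$ the accumulated $\zeta$-exponents agree modulo $n$ and the vanishing cases match. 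By construction $R'$ has a multiplicative basis, so $R'$ is in the essential image of $\funct{free}_*$. I would then define the vertex-wise maps
\[
\pi_x(b, i) = \zeta^i b, \qquad \iota_x(b) = \tfrac{1}{n}\sum_{i=0}^{n-1} \zeta^{-i}(b, i),
\]
check naturality of each, and verify $\pi \circ \iota = \operatorname{id}_R$, exhibiting $R$ as a summand of $R'$ and hence as additively $\Set$-realizable.

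The only place that needs real care is the naturality of $\iota$ and $\pi$, which reduces to the bookkeeping that an index shift by $k$ combines with the twist $\zeta^{-i}$ to produce the compensating factor $\zeta^k$ matching the scalar appearing in $R$. The label ``Vandermonde'' alludes to the change of basis on each block $F\langle (b, 0), \ldots, (b, n-1)\rangle$ from the standard basis to the character basis $\{\tfrac{1}{n}\sum_i \zeta^{-ji}(b, i)\}_{j \in \mathbb{Z}/n\mathbb{Z}}$; the map $\iota_x$ embeds $b$ into the $j = 1$ character while $\pi_x$ projects onto it, and nonsingularity of the Vandermonde matrix is what encodes the splitting $\pi \circ \iota = \operatorname{id}$.
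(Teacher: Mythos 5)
Your proof is correct and is essentially the paper's argument seen from the other side of the Vandermonde base change: the paper forms $\widehat R=\bigoplus_{i=0}^{n-1}R_i$ (power twists), which trivially contains $R$, and conjugates block-diagonal matrices $\operatorname{diag}(1,x,\dots,x^{n-1})$ into permutation matrices via $Z=[\zeta^{ij}]$, while you build the permutation-matrix representation $R'$ directly and split $R$ off with the character-basis maps $\iota,\pi$ (the roots-of-unity averaging), which is the same Vandermonde trick. The verifications you flag (well-definedness of the exponent $k$ modulo $n$, naturality, $\pi\circ\iota=\operatorname{id}$ using invertibility of $n$) all go through as you indicate.
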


Before we work towards a proof of this theorem, we give two corollaries that are given by situations where the assumptions of the theorem are automatically satisfied. Here we call a basis \emph{almost coherent} if any morphism in \( X \) maps any basis vector to a scalar multiple of a basis vector.
  
\begin{corollary} \label{cor.near_mult_basis_is_enough}
For a finite field \( F \), any \( R \in(X, \VectF) \) having an almost coherent basis is additively \( \Set \)-realizable.
\end{corollary}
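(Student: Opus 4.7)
The plan is to derive this as an immediate consequence of Theorem~\ref{thm.isinduced2} by exploiting the structure of the multiplicative group of a finite field. Write $q = |F|$. The multiplicative group $F^\times$ is a finite group of order $q-1$, and it is well-known to be cyclic (a standard fact: any finite subgroup of the multiplicative group of a field is cyclic). In particular, $F$ contains a primitive $(q-1)$-th root of unity, and \emph{every} nonzero element of $F$ is a $(q-1)$-th root of unity.

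Now suppose $R \in \rep(X, \Vect)$ admits a near-multiplicative basis, i.e.\ a basis $B$ such that for every morphism $\alpha$ in $X$ and every $b \in B$, the image $R(\alpha)(b)$ equals $\lambda b'$ for some $\lambda \in F$ and some $b' \in B$ (allowing $\lambda = 0$, in which case the choice of $b'$ is irrelevant). Every such scalar $\lambda$ is either $0$ or a $(q-1)$-th root of unity. Setting $n := q-1$, the hypotheses of Theorem~\ref{thm.isinduced2} are satisfied verbatim, and therefore $R$ lies in the additive image of $\funct{free}_*$, that is, $R$ is additively $\Set$-realizable.

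The only nontrivial ingredient beyond Theorem~\ref{thm.isinduced2} is the cyclicity of $F^\times$; there is no real obstacle here, and the proof is essentially a one-line reduction. Hence the argument is extremely short, and I would present it as a direct invocation of Theorem~\ref{thm.isinduced2} with $n = |F|-1$.
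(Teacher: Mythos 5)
Your proposal is correct and matches the paper's (implicit) argument exactly: the paper states this corollary as a case where the hypotheses of Theorem~\ref{thm.isinduced2} are automatically satisfied, and your observation that $F^\times$ is cyclic of order $|F|-1$, so every nonzero scalar is an $(|F|-1)$-th root of unity and $F$ contains a primitive such root, is precisely the reason. Nothing is missing.
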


Note that here we did not have to explicitly require (primitive) \( n \)-th roots of unity: Since the multiplicative group of a finite field is cyclic, it automatically has a primitive $(|F|-1)$-st root of unity, and all non-zero field elements are powers of this primitive root of unity.

\begin{corollary} \label{cor.finite_cat_is_enough}
Let \( F \) be an algebraic extension of a finite field. For a finite category \( X \), any \( R \in \rep(X, \vectF) \) having an almost coherent basis is additively \( \Set \)-realizable.
\end{corollary}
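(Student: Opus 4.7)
The plan is to reduce Corollary~\ref{cor.finite_cat_is_enough} directly to Theorem~\ref{thm.isinduced2}. Since the theorem already handles any field possessing a primitive \( n \)-th root of unity, it suffices to exhibit an \( n \) such that every scalar appearing in the near-multiplicative basis of \( R \) is an \( n \)-th root of unity (or zero), and such that \( F \) actually contains a primitive \( n \)-th root of unity.

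First I would observe that because \( X \) is finite and \( R \) is finite-dimensional, the near-multiplicative basis has only finitely many vectors, and the set \( \operatorname{Mor}(X) \) is finite; hence only finitely many scalars \( \lambda_1, \dots, \lambda_r \in F \) appear when describing the action of morphisms on basis vectors. Let \( p \) be the characteristic of \( F \) and let \( \mathbb{F}_p \) be its prime field. By hypothesis, \( F \) is an algebraic extension of \( \mathbb{F}_p \), so each nonzero \( \lambda_i \) lies in some finite subfield \( \mathbb{F}_{p^{n_i}} \subseteq F \). Setting \( N = \operatorname{lcm}(n_1, \dots, n_r) \), all the \( \lambda_i \) lie in the finite subfield \( \mathbb{F}_{p^N} \subseteq F \).

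Now set \( n = p^N - 1 \). Since \( \mathbb{F}_{p^N}^\times \) is a cyclic group of order \( n \), every nonzero \( \lambda_i \) satisfies \( \lambda_i^n = 1 \), and moreover a generator of \( \mathbb{F}_{p^N}^\times \) provides a primitive \( n \)-th root of unity lying in \( F \). Thus the hypotheses of Theorem~\ref{thm.isinduced2} are met for this choice of \( n \), and applying the theorem immediately gives that \( R \) lies in the additive image of \( \funct{free}_* \), i.e., is additively \( \Set \)-realizable.

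There is essentially no technical obstacle beyond bookkeeping: the only point requiring care is verifying that finitely many scalars actually appear, which follows from the finiteness of \( X \) and of the chosen basis. Everything else is a routine application of the structure theory of finite fields followed by a direct invocation of Theorem~\ref{thm.isinduced2}.
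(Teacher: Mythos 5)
Your proof is correct and takes the same route as the paper, which presents this corollary precisely as a situation where the hypotheses of Theorem~\ref{thm.isinduced2} are automatically satisfied; your argument simply fills in the routine verification that the finitely many scalars lie in a common finite subfield \( \mathbb{F}_{p^N} \) and that a generator of \( \mathbb{F}_{p^N}^\times \) supplies the required primitive \( (p^N-1) \)-th root of unity.
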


\begin{proof}
This corollary follows from the one above observing that, since \( X \) is finite, \( R \) is actually defined over a finite subfield of \( F \).
\end{proof}

Now we start working towards a proof of Theorem~\ref{thm.isinduced2}. We prepare the following lemma which we will then use for a suitable base change.

\begin{lemma} \label{lem.base_change}
Let \( \zeta \) be a primitive \( n \)-th root of unity in $F$. Then, 
\begin{enumerate}
\item the matrix \( Z = [ \zeta^{ij} ]_{i,j=0}^{n-1} \) is invertible,
\item for any \( n \)-th root of unity \( x \), the diagonal matrix \( \operatorname{diag}(1, x, x^2, \cdots, x^{n-1}) \) is conjugate via \( Z \) to a permutation matrix.
\end{enumerate}
\end{lemma}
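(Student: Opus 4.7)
The plan is to recognize $Z$ as (the transpose of) the discrete Fourier transform matrix for the cyclic group of order $n$, and to exploit the fact that this DFT diagonalizes the cyclic shift.

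For part (1), I would simply invoke the Vandermonde determinant formula. Since $Z_{i,j} = \zeta^{ij} = (\zeta^i)^j$, the matrix $Z$ is the Vandermonde matrix in the values $1, \zeta, \zeta^2, \ldots, \zeta^{n-1}$. These values are pairwise distinct because $\zeta$ is a \emph{primitive} $n$-th root of unity (this is the only place primitivity enters part (1)). Hence
\[
\det(Z) = \prod_{0 \le i < j \le n-1} (\zeta^j - \zeta^i) \ne 0,
\]
so $Z$ is invertible.

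For part (2), write $x = \zeta^k$ for some integer $k$ (possible since the $n$-th roots of unity in $F$ form a cyclic group generated by $\zeta$). Let $P$ be the $n \times n$ permutation matrix of the cyclic shift, say $P e_i = e_{i+1 \bmod n}$. The key computation is that the columns of $Z$ diagonalize $P$: the $j$-th column $v_j = (1, \zeta^j, \zeta^{2j}, \ldots, \zeta^{(n-1)j})^T$ satisfies $P v_j = \zeta^{-j} v_j$ (or $\zeta^j v_j$ depending on orientation of $P$), as a direct check on each coordinate shows. Therefore
\[
Z^{-1} P Z = \operatorname{diag}(1, \zeta, \zeta^2, \ldots, \zeta^{n-1}),
\]
after which raising both sides to the $k$-th power yields
\[
Z^{-1} P^k Z = \operatorname{diag}(1, \zeta^k, \zeta^{2k}, \ldots, \zeta^{(n-1)k}) = \operatorname{diag}(1, x, x^2, \ldots, x^{n-1}),
\]
and $P^k$ is a permutation matrix because it is a power of a permutation matrix.

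There is no real obstacle to this proof; it is a standard observation from Fourier analysis on the cyclic group. The only minor care needed is matching conventions (row versus column, $\zeta^j$ versus $\zeta^{-j}$ as the eigenvalue of $P$) so that after the sign adjustment the exponents in the resulting diagonal match the statement, and checking that the cyclic group generated by $\zeta$ indeed contains \emph{every} $n$-th root of unity in $F$, which is immediate from the fact that the polynomial $T^n-1$ has at most $n$ roots in $F$ and $\zeta$ already produces $n$ distinct ones.
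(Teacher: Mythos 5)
Your proof is correct and follows essentially the same route as the paper: part (1) is the Vandermonde observation in both cases, and part (2) rests on the identity $ZD = PZ$ for the cyclic shift $P$, which the paper verifies directly for $\operatorname{diag}(1,x,\dots,x^{n-1})$ with $x=\zeta^r$ while you verify it for $x=\zeta$ (via the eigenvector computation) and then take $r$-th powers. The only genuine (small) addition on your side is the explicit justification that every $n$-th root of unity in $F$ is a power of $\zeta$, which the paper asserts without comment.
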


\begin{proof}
The matrix \( Z \) is a special Vandermonde matrix, so the first point follows. (Note that by assumption all the \( \zeta^i \) with \( i \) from \( 0 \) to \( n-1 \) are pairwise different.)

For the second point, first note that \( x = \zeta^r \) for some \(r \in \{ 0, \ldots, n-1 \} \). Now we observe that
\begin{align*}
Z \operatorname{diag}(1, x, x^2, \cdots, x^{n-1}) & = [ \zeta^{ij} ]_{i,j=0}^{n-1} \operatorname{diag}(1, \zeta^r, \zeta^{2r}, \cdots, \zeta^{(n-1)r}) \\
& = [\zeta^{ij} \zeta^{jr}]_{i,j = 0}^{n-1} = [\zeta^{(i+r)j}]_{i,j = 0}^{n-1}.
\end{align*}
That is, \( Z \operatorname{diag}(1, x, x^2, \cdots, x^{n-1}) \) is obtained from \( Z \) by moving all rows down by \( r \) positions (cyclically), and consequently \( Z \operatorname{diag}(1, x, x^2, \cdots, x^{n-1}) Z^{-1} \) is the corresponding permutation matrix.
\end{proof}

\begin{proposition} \label{prop.roots->permutations}
Let $R \in \rep(X,\vectF)$. Assume \( F \) has a primitive \( n \)-th root ofunity, and that all matrix entries of \( R \) are zero or \( n \)-th roots of unity, for some \( n \). Then \( R \) is a direct summand of a representation \( \widehat R \) obtained from \( R \) by replacing any matrix entry by a permutation matrix.
\end{proposition}

\begin{proof}
Let \( R_i \) be the representation obtained from \( R \) by replacing any non-zero matrix entry by its \( i \)-th power --- note that this is in fact a representation since \( R \) has an almost coherent basis. Let \( \widehat R = \bigoplus_{i=0}^{n-1} R_i \); clearly \( R = R_1 \) is a direct summand of \( \widehat R \). The matrices for the representation \( \widehat{R} \) may be obtained by replacing in the matrix representation for \( R \) any value \( x \) by the diagonal matrix \( \operatorname{diag}(1, x, x^2, \cdots, x^{n-1}) \) (and replacing zeros by \( n \times n \) zero matrices). In particular, applying a block-wise base change as in Lemma~\ref{lem.base_change} we arrive at the claim.
\end{proof}

\begin{proof}[Proof of Theorem~\ref{thm.isinduced2}]
By Proposition~\ref{prop.roots->permutations}, \( R \) is a direct summand of the representation obtained from \( R \) be replacing all non-zero matrix entries by permutation matrices. But for these matrices there will be at most one \( 1 \) in every column, with all other entries being \( 0 \). Thus this representation is in the essential image of \( \funct{free}^\mathrm{pt}_* \) by Proposition~\ref{prop:mult-basis}.
\end{proof}
\begin{example}
Let $F=\Z/5\Z$ and consider the following representation $R$ of $A_2$:
\[
\begin{tikzcd}
\Z/5\Z\ar[r,"\cdot 3"] & \Z/5\Z.
\end{tikzcd}
\]
It is easy to see that $R$ is additively $\Set$-realizable by a direct argument. However, following the proof strategy above, one observes that $R$ is a direct summand of the top row of the following diagram.
\[
\begin{tikzcd}[sep=25mm, ampersand replacement=\&]
(\Z/5\Z)^4\ar{d}[swap]{Z = \begin{bmatrix} 1 & 1 & 1 & 1 \\ 1 & 2 & 4 & 3 \\ 1 & 4 & 1 & 4 \\ 1 & 3 & 4 & 2
\end{bmatrix}} \ar{r}{\begin{bmatrix} 1 & 0 & 0 &0 \\ 0 & 3 & 0 & 0 \\ 0 & 0 & 4 & 0 \\ 0 & 0 & 0 & 2 \end{bmatrix}} \& (\Z/5\Z)^4 \ar{d}{\begin{bmatrix} 1 & 1 & 1 & 1 \\ 1 & 2 & 4 & 3 \\ 1 & 4 & 1 & 4 \\ 1 & 3 & 4 & 2
\end{bmatrix} =Z} \\
(\Z/5\Z)^4\ar{r}{\begin{bmatrix} 0 & 0 & 0 & 1 \\ 1 & 0 & 0 & 0 \\ 0 & 1 & 0 & 0 \\ 0 & 0 & 1 & 0 \end{bmatrix}} \& (\Z/5\Z)^4
\end{tikzcd}
\]
The change of basis given by $Z$ with $\zeta=2$ shows that $R$ is a summand of a representation with a coherent basis (lower row). 
\end{example}

\subsection{Sets with group actions}
\label{sec.groupac}

For this subsection, let \( G \) be a subgroup of the multiplicative group \( F^{\times} \) of our base field. Then any \( F \)-vector space is naturally a (pointed) \( G \)-set. (Here, by a pointed \( G \)-set we mean a \( G \) set together with a disinguished base point $*$, such that $g* = * \; \forall g \in G$.) Our forgetful functor decomposes into the two steps
\[ \VectF \xrightarrow{\funct{forget}^\mathrm{pt}_G} G\text -\pSet \xrightarrow{\funct{forget-action}} \pSet, \]
where the first functor forgets the vector space addition and restricts scalar multiplication to $G$, while the second functor forgets the $G$-action.

Both steps have left adjoints: The free pointed \( G \)-set on a pointed set \( S \) is \( G \times (S \setminus \{ * \}) \cup \{ * \} \). Another way of saying this is taking \( G \times S \), but identifying all elements of the form \( (g, * ) \). Ignoring this slight deviation for the base point we will denote this functor by \( G \times - \).

The free vector space on a pointed \( G \)-set \( S \) can be obtained from the free vector space on the pointed set \( S \) by factoring out the subspace generated by objects of the form \( fg \cdot s - f \cdot gs \) for \( f \in F \), \( g \in G \), and \( s \in S \). We denote this quotient space by \( \funct{free}^\mathrm{pt}_G(S) \).

By construction (and by the uniqueness of adjoints) we have that
\[ \funct{free}^\mathrm{pt} \cong \funct{free}^\mathrm{pt}_G \circ (G \times - ). \]

\begin{example}\label{ex.D4-2}
Let $R=(\Z/p\Z)^2$ as a vector space over $\Z/p\Z$. Then, as a $G$-set for $G=\{1,2, \ldots, p-1\}$, we have the following orbit in $R$ (seen as a $G$-set) for every $(i,j)\neq (0,0)$:
\[
\{(i,j), (2i,2j), (3i, 3j), \ldots, ((p-1)i, (p-1)j)\}.
\]
Under $\funct{free}^\mathrm{pt}_G$ the image of this orbit is a vector space of dimension 1; this is in contrast to the $(p-1)$-dimensional vector space one would get by disregarding the action of $G$.

More generally, if $F$ is a finite field and $G = F^{\times}$ is the multiplicative group, then
\( \funct{free}^\mathrm{pt}_G \funct{forget}^\mathrm{pt}_G F^d \) is a vector space whose basis is in bijection to the one-dimensional subspaces of our original space, that is, in bijection to $d-1$-dimensional projective space over the same field.
\end{example}

The previous example suggests that working with $G$-sets can improve the algorithm presented in Proposition~\ref{prop.sys_eq} when the underlying field is different from $\Z/2\Z$. We return to this in Section~\ref{sec:gset-algorithm}.

\begin{proposition} \label{prop.free-G_split}
Let \( G \) be a finite subgroup of \( F^{\times} \). Then the natural epimorphism of functors from pointed \( G \)-sets to vector spaces
\[ \funct{free}^\mathrm{pt} \circ \funct{forget-action} \to \funct{free}^\mathrm{pt}_G \]
naturally splits. (Here \( \funct{forget-action} \) denotes the forgetful functor from pointed \( G \)-sets to pointed sets.)
\end{proposition}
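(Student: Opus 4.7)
The plan is to construct a natural splitting by averaging over $G$. Because $G$ is a finite subgroup of $F^{\times}$, its order is coprime to $\operatorname{char}(F)$ (otherwise $x^{|G|}-1$ would have a repeated root in $F$), so $|G|$ is invertible in $F$ and averaging makes sense.

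Concretely, for a pointed $G$-set $S$, I would define an $F$-linear endomorphism $\pi_S$ of $\funct{free}_*(\funct{forget}(S))$ by the formula
\[ \pi_S(s) = \frac{1}{|G|}\sum_{g \in G} g^{-1} \cdot gs \]
on each basis vector $s \in S \setminus \{*\}$, where the scalar $g^{-1} \in F$ acts by multiplication and $gs \in S$ denotes the $G$-action. The key step is to verify that $\pi_S$ annihilates the subspace generated by the relations $g \cdot s - gs$ defining $\funct{free}_*^G(S)$; this boils down to a change of summation variable $h \mapsto hg^{-1}$ inside the sum, yielding $\pi_S(g\cdot s) = g\pi_S(s) = \pi_S(gs)$. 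Consequently $\pi_S$ factors through the quotient to give a natural map $\sigma_S \colon \funct{free}_*^G(S) \to \funct{free}_*(\funct{forget}(S))$.

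To see $\sigma_S$ is a section, one checks that composing with the quotient map sends $[s]$ to $\frac{1}{|G|}\sum_g g^{-1}[gs] = \frac{1}{|G|}\sum_g g^{-1}(g[s]) = [s]$, using the relation $[gs] = g[s]$ already present in $\funct{free}_*^G(S)$. Naturality in $S$ comes essentially for free: for a morphism $f \colon S \to S'$ of pointed $G$-sets, the $G$-equivariance of $f$ gives $f(\pi_S(s)) = \frac{1}{|G|}\sum_g g^{-1} f(gs) = \frac{1}{|G|}\sum_g g^{-1}(g f(s)) = \pi_{S'}(f(s))$, which yields commutativity of the naturality square; naturality of $\sigma$ follows by passage to the quotient.

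The main technical obstacle is the descent of $\pi_S$ through the quotient; everything else is essentially bookkeeping. As a sanity check one might note that if $s \in S$ has a nontrivial stabilizer $H \leq G$, then $\pi_S(s)$ carries the factor $\sum_{h \in H} h^{-1} = 0$ (the sum of all elements of a nontrivial finite subgroup of $F^{\times}$ vanishes, since $(1-\zeta)\sum_{i=0}^{m-1}\zeta^i = 1-\zeta^m = 0$ for $\zeta$ a generator), which is consistent with $[s] = h[s]$ already forcing $[s] = 0$ in $\funct{free}_*^G(S)$.
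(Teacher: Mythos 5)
Your proof is correct and takes essentially the same approach as the paper: the identical averaging map (up to the reindexing $g \mapsto g^{-1}$, so $\frac{1}{|G|}\sum_g g^{-1}\cdot gs$ versus the paper's $\frac{1}{|G|}\sum_g g\cdot g^{-1}s$), the same verification that it annihilates the defining relations and hence descends to a natural section of the quotient map. The stabilizer sanity check at the end is a nice consistency observation but is not needed for the argument.
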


\begin{proof}
Note that since \( G \) is a finite subgroup of \( F^{\times} \) the characteristic of \( F \) cannot divide the order of \( G \). In particular we can write \( \frac{1}{|G|} \) in \( F \). 

By our construction above, \( \funct{free}^\mathrm{pt}_G \) consists of formal linear combinations of the elements of \( S \setminus \{*\} \), modulo the subspace \( U \) generated by elements of the form \( fg \cdot s - f \cdot gs \). 
We consider the ``averaging map'' given on generators of \( \funct{free}^\mathrm{pt} \circ \funct{forget-action} (S)) \) as
\begin{align*}
\funct{free}^\mathrm{pt} \circ \funct{forget-action} (S) & \to \funct{free}^\mathrm{pt} \circ \funct{forget-action}(S) \\
1 \cdot s & \mapsto \frac{1}{|G|} \sum_{g \in G} g \cdot g^{-1} s
\end{align*}
Note that this map vanishes on the generators of \( U \):
\begin{align*}
fg \cdot s - f \cdot gs \mapsto & fg \frac{1}{|G|} \sum_{h \in G} h \cdot h^{-1} s - f \frac{1}{|G|} \sum_{h \in G} h \cdot h^{-1} g s \\
& = \frac{1}{|G|} f \left( \sum_{h \in G} gh \cdot h^{-1}s - \sum_{\tilde{h} \in G} g \tilde{h} \cdot \tilde{h}^{-1} s \right) \\
& = 0,
\end{align*}
where for the first equality we choose \( \tilde{h} = g^{-1} h \), and observe that when \( h \) runs over all of \( G \) then so does \( \tilde{h} \).

Thus we have a well-defined map
\[ \funct{free}^\mathrm{pt}_G(S) = \frac{ \funct{free}^\mathrm{pt} \circ \funct{forget-action}(S)}{U} \to  \funct{free}^\mathrm{pt} \circ \funct{forget-action}(S). \]
It is easily checked that this map is natural in \( S \), and right inverse to the natural epimorphism \( \funct{free}^\mathrm{pt} \circ \funct{forget-action}(S) \to \funct{free}^\mathrm{pt}_G(S) \). 
\end{proof}

\begin{corollary} \label{cor.image_from_G_is_same}
Let \( X \) be a small category. Then for any finite subgroup \( G \) of \( F^{\times} \), the additive images of the functors \( \funct{free}_* \) and \( (\funct{free}^\mathrm{pt}_G)_* \) in \( \rep(X, \VectF) \) coincide.

\end{corollary}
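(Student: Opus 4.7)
The plan is to establish both inclusions of additive images. Both directions will follow essentially formally from two ingredients already available in the excerpt: the factorization \( \funct{free}_* = \funct{free}_*^G \circ (G \times -) \) at the level of pointed sets / pointed \( G \)-sets / vector spaces, together with the natural split epimorphism of Proposition~\ref{prop.free-G_split}. The key observation is that both ingredients lift pointwise to the functor categories \( \rep(X, -) \), since all functors involved are applied objectwise and all naturality statements hold with respect to morphisms in their source category (so they still hold when the source is \( \rep(X, -) \)).

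For the inclusion \( \operatorname{add}(\funct{free}_*) \subseteq \operatorname{add}(\funct{free}_*^G) \), suppose \( R \) is a direct summand of \( \funct{free}_*(S) \) for some \( S \in \rep(X, \pSet) \). Applying the factorization \( \funct{free}_* = \funct{free}_*^G \circ (G \times -) \) pointwise, we see that \( \funct{free}_*(S) \cong \funct{free}_*^G(G \times S) \), where \( G \times S \in \rep(X, G\text{-}\pSet) \). Hence \( R \) is in the additive image of \( \funct{free}_*^G \).

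For the converse inclusion, suppose \( R \) is a direct summand of \( \funct{free}_*^G(T) \) for some \( T \in \rep(X, G\text{-}\pSet) \). By Proposition~\ref{prop.free-G_split}, applied objectwise to \( T(x) \) for each \( x \in X \), the natural epimorphism \( \funct{free}_* \circ \funct{forget} \to \funct{free}_*^G \) admits a natural splitting. Evaluating at \( T \), we obtain a split epimorphism \( \funct{free}_*(\funct{forget}(T)) \to \funct{free}_*^G(T) \) in \( \rep(X, \Vect) \), so \( \funct{free}_*^G(T) \) — and therefore \( R \) — is a direct summand of \( \funct{free}_*(\funct{forget}(T)) \), placing \( R \) in the additive image of \( \funct{free}_* \).

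The main (mild) subtlety is that naturality of the splitting in Proposition~\ref{prop.free-G_split} is essential here: we need the pointwise splittings at each \( x \in X \) to assemble into a morphism of representations, which requires compatibility with the structure maps \( T(x \to y) \). This is exactly what naturality provides, and it is why the proposition was stated as a splitting of natural transformations rather than just as a pointwise statement.
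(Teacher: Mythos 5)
Your proof is correct and follows essentially the same route as the paper: the first inclusion via the factorization \( \funct{free}_* = \funct{free}_*^G \circ (G \times -) \), and the second via the natural splitting of Proposition~\ref{prop.free-G_split} evaluated at a representation in pointed \( G \)-sets. Your closing remark about naturality being what lets the pointwise splittings assemble into a morphism of representations is exactly the point the paper relies on implicitly.
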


\begin{proof}
First recall that by Proposition~\ref{prop.same_add_image} the additive images of \( \funct{free}_* \) and \( \funct{free}^\mathrm{pt}_* \) coincide. 
By the equality \( \funct{free}^\mathrm{pt} = \funct{free}^\mathrm{pt}_G \circ (G \times -) \) we know that the image of \( \funct{free}^\mathrm{pt} \) is contained in that of \( \funct{free}^\mathrm{pt}_G \), and hence the same holds for additive images.

Now let \( R \in \rep(X, G\text{-}\pSet) \). By Proposition~\ref{prop.free-G_split} we know that the epimorphism
\[ \funct{free}^\mathrm{pt} \circ \funct{forget-action}(R) \to \funct{free}^\mathrm{pt}_G(R) \]
splits, so \( \funct{free}^\mathrm{pt}_G(R) \) lies in the additive image of \( \funct{free}^\mathrm{pt} \). It follows that any summand of  \( \funct{free}^\mathrm{pt}_G(R) \) lies in the same additive image.
\end{proof}

\begin{corollary}
Let \( R \in \rep(X, \VectF) \). Assume \( R \) has an almost coherent basis.
If all the non-zero scalars appearing in the matrices of the structure maps of $R$ for that basis generate a finite subgroup of \( F^{\times} \), then \( R \) is additively \( \Set \)-realizable.
\end{corollary}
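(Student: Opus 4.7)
The plan is to realize $R$ directly as $\funct{free}_*^G(S)$ for an explicit $S \in \rep(X, G\text{-}\pSet)$, and then invoke Corollary~\ref{cor.image_from_G_is_same} to conclude that $R$ lies in the additive image of $\funct{free}_*$. Here $G \leq F^{\times}$ denotes the finite subgroup generated by the non-zero scalars appearing in the given near-multiplicative basis; finiteness of $G$ is the standing hypothesis, and it is exactly what is needed to apply the machinery of Section~\ref{sec.groupac}.

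First I would construct $S$ pointwise. For each object $x \in X$, let $\mathcal{B}_x$ denote the portion of the near-multiplicative basis at $x$, and set
\[ S(x) = \{0\} \cup (G \cdot \mathcal{B}_x) \subseteq R(x), \]
pointed at $0$, with $G$ acting by scalar multiplication. This is closed under the $G$-action, and since $F^{\times}$ acts freely on $R(x) \setminus \{0\}$, every non-zero element of $S(x)$ has a unique presentation $g \cdot b$ with $g \in G$ and $b \in \mathcal{B}_x$. For a morphism $f \colon x \to y$ in $X$, near-multiplicativity tells us that $R(f)(b) \in \{0\} \cup (G \cdot \mathcal{B}_y)$ for every $b \in \mathcal{B}_x$; by $F$-linearity this gives $R(f)(S(x)) \subseteq S(y)$, and the restriction $S(f) \colon S(x) \to S(y)$ is automatically $G$-equivariant and pointed. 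Functoriality of $S$ is inherited from $R$.

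Next I would verify that $\funct{free}_*^G(S) \iso R$ as representations. Unpacking the construction of $\funct{free}_*^G$ from Section~\ref{sec.groupac}, the space $\funct{free}_*^G(S(x))$ is spanned by the symbols $[s]$ for $s \in S(x) \setminus \{0\}$, modulo the relations $[g \cdot s] = g \cdot [s]$ for $g \in G$ and $s \in S(x)$. Combined with the free-action observation, these relations collapse each $G$-orbit $G \cdot b$ to the line $F \cdot [b]$, so $\funct{free}_*^G(S(x))$ has $\{[b] : b \in \mathcal{B}_x\}$ as a basis. The assignment $[b] \mapsto b$ then extends to an $F$-linear isomorphism $\funct{free}_*^G(S(x)) \to R(x)$, and these isomorphisms intertwine the structure maps by the very definition of $S(f)$.

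Finally, Corollary~\ref{cor.image_from_G_is_same} asserts that the additive images of $\funct{free}_*$ and $\funct{free}_*^G$ in $\rep(X, \Vect)$ coincide; since $R$ lies in the essential image of $\funct{free}_*^G$, it lies in the additive image of $\funct{free}_*$, which by Proposition~\ref{prop.same_add_image} is the same as the additive image of $\funct{free}$. Hence $R$ is additively $\Set$-realizable. The only nontrivial step is the identification $\funct{free}_*^G(S(x)) \iso R(x)$, but this reduces to the straightforward linear-algebra observation that the defining relations of $\funct{free}_*^G$ precisely undo the free $G$-action on $S(x) \setminus \{0\}$; all remaining steps are bookkeeping.
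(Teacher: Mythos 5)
Your proof is correct and follows the same route as the paper: the paper's own proof simply asserts that the hypotheses say exactly that $R$ lies in the essential image of $\funct{free}_*^G$ for $G$ the group generated by the scalars, and then invokes Corollary~\ref{cor.image_from_G_is_same}. You have merely made explicit the $G$-set $S(x) = \{0\} \cup (G \cdot \mathcal{B}_x)$ and the identification $\funct{free}_*^G(S) \iso R$ that the paper leaves implicit, and this verification (including the free-action argument) is accurate.
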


\begin{proof}
Let \( G \) be the group generated by the non-zero scalars in the matrices. 
The conditions for an almost coherent basis describe exactly what it means to be in the image of \( (\funct{free}^\mathrm{pt}_G)_* \). Thus this corollary follows from the previous one.
\end{proof}

As an immediate consequence, we again obtain Corollaries~\ref{cor.near_mult_basis_is_enough} and~\ref{cor.finite_cat_is_enough}.

\begin{remark}
Throughout this subsection we considered pointed \( G \)-sets. Very similar results also hold for (non-pointed) \( G \)-sets.
\end{remark}

\subsection{An improved algorithm for testing membership in the additive image}
\label{sec:gset-algorithm}

As a consequence of Corollary~\ref{cor.image_from_G_is_same} we obtain a (computationally cheaper, unless \( F = \Z/2\Z \), but more technical to implement) test for being additively \( \Set \)-realizable:

\begin{corollary}
Let \( G \) be a finite subgroup of \( F^{\times} \). A representation \( R \in \rep(X, \VectF ) \) lies in the additive image of \( \funct{free}_* \) if and only if the natural epimorphism
\[ \funct{free}^\mathrm{pt}_G \circ \funct{forget}^\mathrm{pt}_G \circ R \to R \]
splits.
\end{corollary}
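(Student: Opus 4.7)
The plan is to derive this statement as a direct consequence of two facts already established in the excerpt: the general adjunction criterion of Corollary~\ref{lem.adj_summand}, which rephrases membership in the additive image of a left adjoint as a splitting condition on the counit, and Corollary~\ref{cor.image_from_G_is_same}, which identifies the additive images of $\funct{free}_*$ and $\funct{free}_*^G$.

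First, I would recall that $\funct{free}_*^G \colon \pSet\text{-}G \to \Vect$ was constructed in Section~\ref{sec.groupac} precisely as the left adjoint of the forgetful functor $\Vect \to G\text{-}\pSet$ arising from the factorization $\Vect \to G\text{-}\pSet \to \pSet$. Since adjunctions pass pointwise to functor categories, this yields an adjoint pair
\[ \funct{free}_*^G \colon \rep(X, G\text{-}\pSet) \rightleftarrows \rep(X, \Vect) \colon \funct{forget} \]
for any small category $X$.

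Second, I would apply Corollary~\ref{lem.adj_summand} to this adjoint pair: an object $R \in \rep(X, \Vect)$ lies in the additive image of $\funct{free}_*^G$ if and only if the counit
\[ \funct{free}_*^G(R_{\text{as representation in $G$-sets}}) \to R \]
is a split epimorphism. The epimorphism property itself follows from the explicit construction of $\funct{free}_*^G$ as a quotient of $\funct{free}_*\circ \funct{forget}$, so the content of the criterion is exactly the splitting.

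Finally, I would invoke Corollary~\ref{cor.image_from_G_is_same} to replace ``additive image of $\funct{free}_*^G$'' by ``additive image of $\funct{free}_*$''. Chaining these three facts produces precisely the biconditional in the statement. There is essentially no obstacle here: the corollary is a straightforward repackaging of results already proved, and the only thing worth emphasizing in the writeup is the identification of $\funct{free}_*^G$ with the left adjoint needed to invoke Corollary~\ref{lem.adj_summand}.
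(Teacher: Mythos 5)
Your argument is correct and is exactly the route the paper intends: the paper states this corollary as an immediate consequence of Corollary~\ref{cor.image_from_G_is_same}, with the splitting criterion coming from Corollary~\ref{lem.adj_summand} applied to the adjunction between \( \funct{free}_*^G \) and the forgetful functor \( \Vect \to G\text{-}\pSet \). Your explicit identification of \( \funct{free}_*^G \) as the relevant left adjoint is the only detail the paper leaves implicit, and you have supplied it correctly.
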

\begin{proof}
This follows from Corollary~\ref{cor.adj_summand} and Corollary~\ref{cor.image_from_G_is_same}.
\end{proof}

Let us analyze the running time for finite fields: As in Section~\ref{subsect:runtime} we have a map vector space representations, and need to decide if it splits. 
Choosing $G =F^{\times}$,
we follow the exact same calculations as in that section, the only change being that the dimension of  
\( \funct{free}^\mathrm{pt}_G \circ \funct{forget}^\mathrm{pt}_G (F^{d_v}) \)
is \( \frac{|F|^{d_v} - 1}{|F|-1} \), rather than \( |F|^{d_v} - 1 \) as in \ref{subsect:runtime}. Thus we get a system of linear equations with
\begin{align*}
\#{\rm indeterminants} &= \sum_{v \in \operatorname{Obj}(X)} d_v \left(\frac{|F|^{d_v} - 1}{|F|-1}\right)  \\
\#{\rm equations} &= \sum_{v \in \operatorname{Obj}(X)} d_v^2 + \sum_{v \to w } d_v \left(\frac{|F|^{d_w} - 1}{|F|-1}\right).
\end{align*}

Thus we get the following slight improvement on Corollary~\ref{cor.algo}.

\begin{proposition}
\label{prop:improvedalgo}
Using Gaussian elimination, one can determine whether $R\in \rep(X,\vectF)$ is additively $\Set$-realizable in $\mathcal{O}\left(m^3\cdot |X|^3\cdot |F|^{3(m-1)} \right)$ arithmetic operations, where $m = \max_{v \in \operatorname{Obj}(X)} \dim R(v)$.
\end{proposition}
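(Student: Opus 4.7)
The plan is to assemble the two counts already displayed in the paragraph preceding the proposition and feed them into the standard Gaussian elimination complexity bound. By the corollary immediately above, with $G = F^\times$, testing additive $\Set$-realizability of $R$ reduces to deciding whether the natural epimorphism $\funct{free}_*^G(R_{\text{as }G\text{-set}}) \to R$ splits. Fixing bases, this becomes precisely the sort of linear system analysed in Section~\ref{subsect:runtime}, except that for each $v \in \operatorname{Obj}(X)$ the vector space $\funct{free}_*^G(F^{d_v}_{\text{as pointed }F^\times\text{-set}})$ has dimension $\frac{|F|^{d_v}-1}{|F|-1}$ rather than $|F|^{d_v}-1$, because the $F^\times$-orbits on $F^{d_v}\setminus\{0\}$ all have size $|F|-1$.

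Next I would bound the number of indeterminants and equations using the inequality
\[ \frac{|F|^{d_v}-1}{|F|-1} \;\leq\; |F|^{d_v-1}\cdot\frac{|F|}{|F|-1} \;=\; \mathcal{O}(|F|^{m-1}), \]
valid uniformly for all finite fields. Summing over the at most $|X|$ vertices (and arrows) gives that both the number of indeterminants and the number of equations displayed just before the proposition are $\mathcal{O}(|X|\cdot m\cdot |F|^{m-1})$; the pure $d_v^2$ contribution to the equation count is absorbed since $m \leq m\cdot |F|^{m-1}$.

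Finally, invoking the standard fact that Gaussian elimination solves a linear system with $E$ equations in $V$ unknowns in $\mathcal{O}(EV^2)$ arithmetic operations yields
\[ \mathcal{O}\bigl(|X|\cdot m\cdot |F|^{m-1}\cdot(|X|\cdot m\cdot |F|^{m-1})^2\bigr) \;=\; \mathcal{O}\bigl(m^3\cdot |X|^3\cdot |F|^{3(m-1)}\bigr), \]
as asserted. I do not foresee any obstacle: the argument is a mechanical transcription of the proof of Corollary~\ref{cor.algo}, the only quantitative change being the saving of the factor $\tfrac{1}{|F|-1}$ per coordinate, which translates in the final estimate into the replacement of $|F|^{3m}$ by $|F|^{3(m-1)}$ (and, as the surrounding text notes, yields no improvement precisely when $F = \mathbb{F}_2$).
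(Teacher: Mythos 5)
Your proposal is correct and follows exactly the route the paper takes (which it leaves implicit): replace the dimension count $|F|^{d_v}-1$ from Corollary~\ref{cor.algo} by $\frac{|F|^{d_v}-1}{|F|-1} = \mathcal{O}(|F|^{m-1})$, note that both the number of unknowns and the number of equations are then $\mathcal{O}(|X|\cdot m\cdot |F|^{m-1})$, and apply the $\mathcal{O}(EV^2)$ bound for Gaussian elimination. No gaps.
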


Let us illustrate the reduced sizes in the improved algorithm taking a second look at a previous example:

\begin{example} \label{example:D4_by_G-set}
Revisiting $R$ from Example~\ref{ex.D4} with $\Z/2\Z$ replaced by any finite field $F$,
%
we consider the representation
\[ \begin{tikzcd}[sep=10mm]
& F\ar{d}{\begin{bmatrix} 1 \\ 0 \end{bmatrix}} & \\
F \ar{r}{\begin{bmatrix} 0 \\ 1 \end{bmatrix}} & F \oplus F & F \, . \ar{l}[swap]{\begin{bmatrix} 1 \\ 1 \end{bmatrix}} 
\end{tikzcd} \]
Choosing $G = F^\times$,
by the discussion in Example~\ref{ex.D4-2} we know that the vector space \( \funct{free}^\mathrm{pt}_G \circ\funct{forget}^\mathrm{pt}_G (F) \) is one-dimensional, and \( \funct{free}^\mathrm{pt}_G \circ \funct{forget}^\mathrm{pt}_G (F^2) \) has a basis corresponding to the lines in $F^2$. In particular, since the three vectors \( \left[ \begin{smallmatrix} 1 \\ 0 \end{smallmatrix} \right] \), \( \left[ \begin{smallmatrix} 0 \\ 1 \end{smallmatrix} \right] \), and \( \left[ \begin{smallmatrix} 1 \\ 1 \end{smallmatrix} \right] \) are linearly independent, they are mapped to different coordiantes of \( \funct{free}^\mathrm{pt}_G \circ \funct{forget}^\mathrm{pt}_G (F^2) \). It follows that the representation decomposes into a sum of indicator modules. Therefore, $R$ is not additively $\Set$-realizable.
\end{example}

\section{Transferring \texorpdfstring{\( \Set \)}{Set}-realizability}
\label{sec:transfer}

The aim of this section is to investigate how \( \Set \)-realizability changes when we make (small) changes to the base category \( X \). In the first subsection we will consider general full subcategories. In the second subsection we will look at two categories differing by a terminal object --- this setup will in particular lead us to a surprisingly large class of categories admitting only finitely many indecomposable \( \Set \)-realizables. In the final section we will consider reflection functors, changing the direction of certain arrows in our base category.

\subsection{Full subcategories}
\label{sec:fullsubcat}
In this section, $X$ denotes a small category and $S$ a full subcategory of $X$. Recall that if $\mathbf{C}$ is a cocomplete category, then the restriction functor $\funct{res}\colon \rep(X, \mathbf{C}) \to \rep(S, \mathbf{C})$ has a left adjoint given by the left Kan extension $\funct{Lan}\colon\rep(S, \mathbf{C})\to \rep(X, \mathbf{C})$. The unit of the adjunction is an isomorphism, and, in particular, $\funct{Lan}$ is fully faithful. 

\begin{lemma}\label{lem:kan-commutes}
Both the restriction functor $\funct{res}\colon \rep(X, \Set) \to \rep(S, \Set) \) and its left adjoint $\funct{Lan}\colon \rep(S, \Set)\to \rep(X, \Set)$ commute with the functor $\funct{free}_*$.
\end{lemma}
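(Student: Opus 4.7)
The plan is to handle the two statements separately, with the restriction case being immediate and the Kan extension case following from uniqueness of adjoints.

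First I would dispose of the restriction functor. Both $\funct{res}$ and $\funct{free}$ are defined pointwise on representations: $\funct{res}(R)(s) = R(s)$ for $s \in S$, and $\funct{free}(R)(x) = \funct{free}(R(x))$ for any object $x$. Hence for $R \in \rep(X, \Set)$ and $s \in S$ we have $\funct{free}(\funct{res}(R))(s) = \funct{free}(R(s)) = \funct{res}(\funct{free}(R))(s)$, and the same identification is compatible with morphisms in $S$. So $\funct{free} \circ \funct{res} = \funct{res} \circ \funct{free}$ on the nose.

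For the Kan extension I would argue by uniqueness of left adjoints. Consider the two composite functors $\funct{free} \circ \funct{Lan}$ and $\funct{Lan} \circ \funct{free}$ from $\rep(S, \Set)$ to $\rep(X, \Vect)$. As a composition of left adjoints, $\funct{free} \circ \funct{Lan}$ is left adjoint to $\funct{res} \circ \funct{forget}$, while $\funct{Lan} \circ \funct{free}$ is left adjoint to $\funct{forget} \circ \funct{res}$. But forgetting and restricting both act pointwise, and therefore commute: for any $R \in \rep(X, \Vect)$ and $s \in S$, $\funct{forget}(\funct{res}(R))(s) = \funct{forget}(R(s)) = \funct{res}(\funct{forget}(R))(s)$. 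Thus the two right adjoints agree, and uniqueness of adjoints yields a natural isomorphism $\funct{free} \circ \funct{Lan} \cong \funct{Lan} \circ \funct{free}$.

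I do not expect any real obstacle here; the only thing worth double-checking is the pointwise identification $\funct{forget} \circ \funct{res} = \funct{res} \circ \funct{forget}$, which is where the fact that $S$ is a \emph{full} subcategory of $X$ ensures that restricting a functor and then forgetting gives the same representation of $S$ as forgetting first and then restricting. As an alternative route one could use the coend (or comma-category colimit) formula
\[ \funct{Lan}(R)(x) = \operatorname{colim}_{(s, s \to x)} R(s), \]
and then invoke the fact that $\funct{free}\colon \Set \to \Vect$, being itself a left adjoint, preserves all colimits; this gives a direct construction of the isomorphism rather than one obtained abstractly from adjoints. Either approach completes the proof.
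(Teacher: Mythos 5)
Your proof is correct and takes essentially the same route as the paper: the restriction case is the pointwise identity, and the Kan extension case follows by composing adjoints and invoking uniqueness of left adjoints applied to the equality \( \funct{forget} \circ \funct{res} = \funct{res} \circ \funct{forget} \). One small remark: that last equality holds for any subcategory (both functors act pointwise) and does not actually use fullness of \( S \); fullness is only relevant elsewhere, e.g.\ for the unit of the Kan adjunction to be an isomorphism.
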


\begin{proof}
It is clear that $\funct{res}\circ \funct{free}_* = \funct{free}_*\circ \funct{res}$. 

For the left Kan extensions, recall that the left adjoint of a composition of functors is the composition of left adjoints. Thus the equality
\[ \funct{forget}_* \circ \funct{res} = \funct{res} \circ \funct{forget}_* \]
of functors from $\rep(X, \VectF)$ to $\rep(S, \Set)$ implies that the two compositions of left adjoints are also equal, that is that
\[ \funct{Lan} \circ \funct{free}_* = \funct{free}_* \circ \funct{Lan} \]
as functors from \( \rep(S, \Set) \) to \( \rep(X, \VectF) \). 
\end{proof}

\begin{proposition} \label{prop.Lan_is_realizable}
A linear representation $R \colon S\to \VectF$ is additively \( \Set \)-realizable if and only if \( \funct{Lan} R \colon X\to \VectF \) is. 
\end{proposition}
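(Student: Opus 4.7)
The plan is to prove both directions using Lemma~\ref{lem:kan-commutes}, which tells us that both $\funct{Lan}$ and $\funct{res}$ commute with $\funct{free}$. The key additional facts I will use are that (a) $\funct{Lan}$ is additive (being a left adjoint between additive categories, or at least preserving finite coproducts which are biproducts on the linear side), (b) $\funct{res}$ is evaluationwise and so trivially preserves direct sums and summands, and (c) since $S$ is a full subcategory of $X$, the unit of the adjunction $(\funct{Lan}, \funct{res})$ is a natural isomorphism, so $\funct{res} \circ \funct{Lan} \iso \operatorname{id}_{\rep(S, -)}$.

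For the forward direction, suppose $R$ is additively $\Set$-realizable, so there exist $T \in \rep(S, \Set)$ and $R' \in \rep(S, \Vect)$ with $\funct{free}(T) \iso R \oplus R'$. Applying $\funct{Lan}$ and using Lemma~\ref{lem:kan-commutes}, I obtain
\[ \funct{free}(\funct{Lan}(T)) \iso \funct{Lan}(\funct{free}(T)) \iso \funct{Lan}(R) \oplus \funct{Lan}(R'), \]
which exhibits $\funct{Lan}(R)$ as a summand of a representation in the essential image of $\funct{free}$.

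For the converse, assume $\funct{Lan}(R)$ is additively $\Set$-realizable, so there exist $U \in \rep(X, \Set)$ and $M \in \rep(X, \Vect)$ with $\funct{free}(U) \iso \funct{Lan}(R) \oplus M$. Apply $\funct{res}$ to both sides. Since $\funct{res}$ preserves direct sums and, by Lemma~\ref{lem:kan-commutes}, commutes with $\funct{free}$, and since $\funct{res} \circ \funct{Lan} \iso \operatorname{id}$, I get
\[ \funct{free}(\funct{res}(U)) \iso \funct{res}(\funct{free}(U)) \iso R \oplus \funct{res}(M), \]
so $R$ is a summand of $\funct{free}(\funct{res}(U))$, as desired.

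I do not anticipate any real obstacles: the only mild subtlety is confirming that $\funct{Lan}$ preserves direct summands, which follows from it being an additive functor (it preserves the zero object and coproducts since it is a left adjoint, hence preserves biproducts between the relevant additive categories). Everything else is a formal consequence of the adjunction, fullness of $S \subseteq X$, and the commutation from Lemma~\ref{lem:kan-commutes}.
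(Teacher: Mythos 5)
Your proof is correct and follows essentially the same route as the paper: the forward direction applies $\funct{Lan}$ (which preserves coproducts, hence summands) together with Lemma~\ref{lem:kan-commutes}, and the converse applies $\funct{res}$ together with $\funct{res}\circ\funct{Lan}\iso\operatorname{id}$ and the same lemma. The only difference is that you spell out the justifications (additivity of $\funct{Lan}$, fullness of $S$) slightly more explicitly than the paper does.
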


\begin{proof}
If \( R \) is a direct summand of \( \funct{free} \circ T \), then \( \funct{Lan} (R) \) is a direct summand of $\funct{Lan}(\funct{free} \circ T)$ as left adjoints preserve coproducts.  The forward implication now follows from Lemma~\ref{lem:kan-commutes}. Conversely, if \( \funct{Lan} (R) \) is a direct summand of \( \funct{free} \circ T \), then \( R = \funct{res} \circ \funct{Lan} (R) \) is a direct summand of $\funct{res} (\funct{free} \circ T)$. The backward implication now follows from Lemma~\ref{lem:kan-commutes}.
\end{proof}

The previous proposition implies the following.

\begin{corollary} \label{cor.subcategory}
Let \( S \) be a full subcategory of \( X \).
\begin{itemize}
\item If all indecomposable additively \( \Set \)-realizable representations of \( X \) are indicator representations, then the same is true for \( S \).
\item If there are only finitely many indecomposable additively \( \Set \)-realizable representations of \( X \), then the same is true for \( S \).
\item If all linear representations of \( X \) are additively \( \Set \)-realizable, then the same is true for \( S \).
\end{itemize}
\end{corollary}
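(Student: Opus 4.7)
My plan is to deduce all three bullets from Proposition~\ref{prop.Lan_is_realizable} combined with two formal properties of the left Kan extension $\funct{Lan} \colon \rep(S, \Vect) \to \rep(X, \Vect)$ along the inclusion $S \hookrightarrow X$: first, that $\funct{Lan}$ is fully faithful (equivalently, the unit $\operatorname{id} \Rightarrow \funct{res} \circ \funct{Lan}$ is a natural isomorphism, as recalled at the beginning of Section~\ref{sec:fullsubcat}), and second, that restriction along $S \hookrightarrow X$ sends any indicator representation of $X$ to an indicator representation of $S$ (the new defining subset being the intersection of the original one with $\operatorname{Obj}(S)$). Both properties are purely formal.

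I would begin with bullet (3), which is immediate: for any $R \in \rep(S, \Vect)$, the representation $\funct{Lan} R$ is additively $\Set$-realizable by the hypothesis on $X$, and therefore $R$ itself is additively $\Set$-realizable by Proposition~\ref{prop.Lan_is_realizable}.

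For the remaining bullets the key auxiliary step is that $\funct{Lan}$ preserves indecomposability and reflects isomorphisms. Full faithfulness gives a ring isomorphism $\End(R) \iso \End(\funct{Lan} R)$, so idempotents -- and hence direct-sum decompositions -- correspond; in the Krull--Schmidt setting of finite-dimensional representations this yields both claims. Consequently, the assignment $R \mapsto \funct{Lan} R$ injects isomorphism classes of indecomposable additively $\Set$-realizable representations of $S$ into those of $X$ (indecomposability is preserved, and additive $\Set$-realizability transfers by Proposition~\ref{prop.Lan_is_realizable}), which gives bullet (2). For bullet (1), if $R \in \rep(S, \Vect)$ is indecomposable and additively $\Set$-realizable, then $\funct{Lan} R$ is indecomposable and additively $\Set$-realizable, hence an indicator representation of $X$ by hypothesis. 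Restricting back and using the unit isomorphism $R \iso \funct{res}\,\funct{Lan} R$ together with the second formal property above identifies $R$ with an indicator representation of $S$.

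The only nontrivial point in this plan is the indecomposability-preservation claim for $\funct{Lan}$, which I regard as the ``main obstacle'' even though it is standard; beyond that, everything reduces to invoking Proposition~\ref{prop.Lan_is_realizable} and chasing the unit isomorphism.
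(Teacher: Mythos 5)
Your proposal is correct and follows essentially the same route as the paper, which simply states that the corollary ``follows from the previous proposition'' --- i.e., from Proposition~\ref{prop.Lan_is_realizable} together with the formal facts that \( \funct{Lan} \) is fully faithful (hence preserves indecomposability and reflects isomorphisms) and that restriction carries indicator representations to indicator representations. You have merely made explicit the details the paper leaves implicit.
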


\begin{example}
For the quiver of type \( \tilde{D}_4 \) with three ingoing arrows as depicted below, over \( F = \Z / 2\Z \), there is a linear representation that is not additively \( \Set \)-realizable.
\[ \begin{tikzcd}[row sep=3mm]
\circ \ar[rd] \\
\circ \ar[r] & \circ \ar[r] & \circ \\
\circ \ar[ru]
\end{tikzcd} \]
This follows from Example~\ref{ex.D4} by the corollary. (For an arbitrary field the same conclusion holds once we have Example~\ref{ex.D4_inward}.)
\end{example}

\subsection{Adding and removing terminal objects in finite categories}
\label{sec:terminalobj}

We now focus specifically on terminal objects in our base category \( X \). In this subsection, we assume \( X \) to be finite and we consider finite-dimensional representations, so that we get unique decompositions into indecomposables.

\begin{lemma} \label{lem.terminal_is_1d}
Assume \( X \) has a terminal object \( t \). Let \( R \) be an indecomposable linear representation of \( X \). If \( R \) is additively \( \Set \)-realizable then \( R(t) \) is at most one-dimensional.
\end{lemma}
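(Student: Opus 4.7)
The plan is to exploit the universal property of the terminal object to decompose any set-valued realization into pieces that are one-dimensional at $t$, and then apply Krull--Schmidt to pull $R$ out of one such piece.

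First, by Proposition~\ref{prop.H0Free}, Proposition~\ref{prop.enoughToConsiderFinite} and Proposition~\ref{prop.same_add_image}, additive $\Set$-realizability of $R$ gives a finite $S \in \rep(X, \set)$ together with $R' \in \rep(X, \vect)$ such that $\funct{free}(S) \iso R \oplus R'$. I would then work with such an $S$ throughout.

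Next, use the terminal object $t$ to decompose $S$. For each $b \in S(t)$, define $S_b \subseteq S$ pointwise by
\[ S_b(x) = (S(x \to t))^{-1}(b) \subseteq S(x), \]
where $x \to t$ denotes the unique morphism. The key verification is that $\{S_b\}_{b \in S(t)}$ assembles into a coproduct decomposition $S = \bigsqcup_{b \in S(t)} S_b$ inside $\rep(X, \set)$: for any morphism $f \colon x \to y$, uniqueness of morphisms to $t$ forces $(y \to t) \circ f = (x \to t)$, so $S(f)$ carries $S_b(x)$ into $S_b(y)$. This is the one step where the terminal hypothesis is actually used, and it is the crux of the argument; everything else is formal.

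Since $\funct{free}$ is a left adjoint it preserves coproducts, so
\[ R \oplus R' \iso \funct{free}(S) \iso \bigoplus_{b \in S(t)} \funct{free}(S_b). \]
By construction $S_b(t) = \{b\}$, hence $\funct{free}(S_b)(t)$ is one-dimensional. Finally, Krull--Schmidt (valid because $X$ is finite and we are working with finite-dimensional representations) applied to the indecomposable $R$ identifies $R$ with a direct summand of some $\funct{free}(S_b)$, which forces $\dim R(t) \leq \dim \funct{free}(S_b)(t) = 1$.

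The main obstacle is psychological rather than technical: one must recognize that the terminal object lets the underlying set at $t$ index a decomposition of the \emph{whole} representation, not just of $S(t)$. Once the $S = \bigsqcup_b S_b$ decomposition is in hand the rest is automatic.
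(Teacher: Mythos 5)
Your proof is correct and follows essentially the same route as the paper: the paper also decomposes the set-valued representation $N$ as a disjoint union $\bigcup_{h \in N(t)} N_h$ with $N_h(v) = N(v \to t)^{-1}(\{h\})$, applies $\funct{free}$ to turn this into a direct sum, and concludes that the indecomposable $R$ must be a summand of a single $\funct{free}(N_h)$, which is one-dimensional at $t$. Your extra care in reducing to finite sets so that Krull--Schmidt applies cleanly is a reasonable (and slightly more explicit) justification of the final step, which the paper leaves implicit.
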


\begin{proof}
Let \( N\in\rep(X, \set)\). Then we obtain a disjoint union decomposition of \( N \) as
\[ N = \bigcup_{h \in N(t)} N_h, \]
where $N_h$ is the \(\set\)-representation of $X$ given by \( N_h(v) = N(v \to t)^{-1}(\{h\}) \). It follows that
\[ \funct{free} \circ N = \bigoplus_{h \in N(t)} \funct{free} \circ N_h. \]
Now, if \( R\) is a direct summand of \( \funct{free} \circ N \) then it is a direct summand of one of the \( \funct{free} \circ N_h \), hence it is at most one-dimensional in position \( t \).
\end{proof}

\begin{example} \label{ex.D4_inward}
Consider the quiver of type \( D_4 \) as depicted below, and the indecomposable representation with the dimension vector given next to it.
\[ \begin{tikzcd}[sep=5mm]
& \circ \ar[d] &&&& 1 \ar[d,-] & \\
\circ \ar[r] & \circ & \circ \ar[l] && 1 \ar[r,-] & 2 & 1\ar[l,-]
\end{tikzcd} \]
This linear representation is not additively \( \Set \)-realizable. This gives a deeper explanation for the conclusion drawn for $F=\Z/2\Z$ in Example~\ref{ex.D4}.

It further extends to arbitrary quivers of type \( D \), see Example~\ref{example:D_n_not_realizable}.
\end{example}

\begin{example}
Consider the quivers of types \( E_6 \), \( E_7 \), and \( E_8 \) as depicted on the left below, and the indecomposable representation with the dimension vector given next to it.
\[ \begin{tikzcd}[sep=3mm]
&& \circ &&&&&& 2 && \\
\circ \ar[r] & \circ \ar[r] & \circ \ar[u] & \circ \ar[l] & \circ \ar[l] && 1 \ar[r,-] & 2 \ar[r,-] & 3 \ar[u,-] & 2 \ar[l,-] & 1 \ar[l,-]
\end{tikzcd} \]
\[ \begin{tikzcd}[sep=3mm]
&&& \circ \ar[d] &&&&&&& 2 \ar[d,-] && \\
\circ \ar[r] & \circ \ar[r] & \circ \ar[r] & \circ \ar[r] & \circ \ar[r] & \circ && 1 \ar[r,-] & 2 \ar[r,-] & 3 \ar[r,-] & 4 \ar[r,-] & 3 \ar[r,-] & 2
\end{tikzcd} \]
\[ \begin{tikzcd}[sep=3mm]
&&&& \circ \ar[d] &&&&&&&& 3 \ar[d,-] && \\
\circ & \circ \ar[l] & \circ \ar[l] & \circ \ar[l] & \circ \ar[l] & \circ \ar[l] & \circ \ar[l]  && 2 & 3 \ar[l,-] & 4 \ar[l,-] & 5 \ar[l,-] & 6 \ar[l,-] & 4 \ar[l,-] & 2 \ar[l,-]
\end{tikzcd} \]
These linear representations are not additively \( \Set \)-realizable.
\end{example}

More generally we have the following.

\begin{proposition} \label{prop.colim_1d}
Let \( R \) be an indecomposable linear representation of \( X \). If \( X \) is additively \( \Set\)-realizable, then \( \varinjlim_{v \in X} R(v) \) is at most one-dimensional.
\end{proposition}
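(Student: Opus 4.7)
The plan is to reduce to the case already handled by Lemma~\ref{lem.terminal_is_1d} via left Kan extension. Let $X^+$ denote the category obtained from $X$ by freely adjoining a new terminal object $t$; concretely, $X^+$ has object set $\operatorname{Obj}(X) \cup \{t\}$, the morphisms among objects of $X$ are unchanged, and there is a unique morphism $v \to t$ for every $v \in \operatorname{Obj}(X^+)$. Then $X$ sits inside $X^+$ as a full subcategory, so Proposition~\ref{prop.Lan_is_realizable} applies: $R$ is additively $\Set$-realizable if and only if $\funct{Lan} R \in \rep(X^+, \Vect)$ is.

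Next I would verify the two properties of $\funct{Lan} R$ needed to invoke Lemma~\ref{lem.terminal_is_1d} at the terminal vertex~$t$. First, because the inclusion $X \hookrightarrow X^+$ is fully faithful, the unit of the adjunction $(\funct{Lan}, \funct{res})$ is an isomorphism; consequently $\funct{Lan}$ is fully faithful as a functor between the representation categories, yielding $\End(\funct{Lan} R) \cong \End(R)$. Since $R$ is indecomposable (equivalently, has local endomorphism ring), so is $\funct{Lan} R$. Second, I would compute $(\funct{Lan} R)(t)$ via the pointwise formula as a colimit over the comma category of the inclusion down to $t$. Since $t$ is terminal in $X^+$, this comma category is isomorphic to $X$ itself, and the identification
\[ (\funct{Lan} R)(t) \;\cong\; \varinjlim_{v \in X} R(v) \]
follows. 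Finiteness of $X$ and finite-dimensionality of $R$ guarantee that this colimit lives in $\vect$, so everything stays within the setting of the earlier lemma.

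Applying Lemma~\ref{lem.terminal_is_1d} to the indecomposable additively $\Set$-realizable representation $\funct{Lan} R$ of $X^+$ at the terminal object $t$ yields $\dim (\funct{Lan} R)(t) \leq 1$, which by the computation above is exactly the claim that $\varinjlim_{v \in X} R(v)$ is at most one dimensional.

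I do not anticipate any substantial obstacle: the main content is already in Lemma~\ref{lem.terminal_is_1d} and Proposition~\ref{prop.Lan_is_realizable}, and the only nontrivial auxiliary point is that the left Kan extension along a fully faithful inclusion preserves indecomposability, which is a standard consequence of the unit being an isomorphism. The mild subtlety worth being explicit about is that $X^+$ is itself a finite category and that the pointwise colimit formula for $\funct{Lan}$ specializes cleanly at the adjoined terminal object, so that Lemma~\ref{lem.terminal_is_1d} can be quoted verbatim.
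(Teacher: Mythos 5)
Your proposal is correct and follows essentially the same route as the paper: adjoin a terminal object $t$, transfer realizability via Proposition~\ref{prop.Lan_is_realizable}, identify $(\funct{Lan} R)(t)$ with $\varinjlim_{v \in X} R(v)$, and conclude by Lemma~\ref{lem.terminal_is_1d}. The only (immaterial) difference is how indecomposability of $\funct{Lan} R$ is justified --- you use full faithfulness of $\funct{Lan}$ and locality of the endomorphism ring, while the paper argues from $\funct{res}\circ\funct{Lan}\cong\operatorname{id}$ and preservation of direct sums; both work.
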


\begin{proof}
We may extend \( X \) to a category \( X \cup \{ t \} \) with a terminal object \( t \). By Proposition~\ref{prop.Lan_is_realizable} we have that \( R \) being additively realizable implies that also \( \funct{Lan} F \) is additively \( \Set \)-realizable, where we take the left Kan extension along the inclusion of \( X \) into \( X \cup \{ t \} \).

Moreover, since $\funct{res}\circ \funct{Lan} \cong \mathrm{id}_{\rep(X,\vectF)}$ and and $\funct{res}$ preserves direct sums, we know that \( \funct{Lan} F \) is indecomposable. It follows from Lemma~\ref{lem.terminal_is_1d} that \( (\funct{Lan} F)(t) =\varinjlim_{v \in X} R(v) \) is at most one-dimensional.
\end{proof}

\begin{example}
Consider the quiver of type \( E_8 \) as depicted below, and the indecomposable representation with the dimension vector given next to it.
\[ \begin{tikzcd}[sep=3mm]
&&&& \circ &&&&&&&& 3 && \\
\circ \ar[r] & \circ \ar[r] & \circ \ar[r] & \circ \ar[r] & \circ \ar[r] \ar[u] & \circ & \circ \ar[l]  && 1 \ar[r,-] & 2 \ar[r,-] & 3 \ar[r,-] & 4 \ar[r,-] & 5 \ar[r,-] \ar[u,-] & 4 & 2 \ar[l,-]
\end{tikzcd} \]
This linear representation has a two-dimensional colimit, and therefore is not additively \( \Set \)-realizable.
\end{example}

\begin{theorem} \label{thm.add_terminal}
Assume \( X \) has a terminal object \( t \), and let \( S = X \setminus \{ t \} \). If there are only finitely many indecomposable additively \( \Set \)-realizable representations of \( S \) then the same is true for \( X \).
\end{theorem}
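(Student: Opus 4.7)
The plan is to classify all indecomposable additively \( \Set \)-realizable representations \( R \) of \( X \) according to the dimension of \( R(t) \) and the isomorphism type of \( R|_S \). Let \( S_1, \ldots, S_k \) be representatives for the indecomposable additively \( \Set \)-realizable representations of \( S \). First I would invoke Lemma~\ref{lem.terminal_is_1d} to force \( \dim R(t) \in \{0, 1\} \), and use the fact that restriction commutes with \( \funct{free} \) (Lemma~\ref{lem:kan-commutes}) to conclude that \( R|_S \) is additively \( \Set \)-realizable in \( S \); by Krull--Schmidt, \( R|_S \iso \bigoplus_j T_j \) with each \( T_j \) isomorphic to some \( S_i \).

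If \( R(t) = 0 \), then \( R \) is simply the extension by zero of \( R|_S \), and indecomposability of \( R \) forces \( R|_S \) to be indecomposable, giving at most \( k \) isomorphism classes. If \( R(t) = F \), then \( R \) is determined by \( R|_S \) together with the natural transformation \( \alpha = (\alpha_j) \colon R|_S \to \underline{F} \) encoding the structure maps \( R(v) \to R(t) = F \), where \( \underline{F} \) is the constant functor at \( F \) on \( S \). Since \( \Hom_{\rep(S, \Vect)}(T_j, \underline{F}) \iso (\varinjlim T_j)^* \) and \( \dim \varinjlim T_j \leq 1 \) by Proposition~\ref{prop.colim_1d}, each \( \alpha_j \) lies in a space of dimension at most one. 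I would then establish two claims restricting the shape of such \( R \): first, if some \( \alpha_j = 0 \) then the extension by zero of \( T_j \) splits off as a direct summand of \( R \), contradicting indecomposability (the remaining summand is nonzero because \( R(t) \neq 0 \)); second, if \( T_j \iso T_{j'} \) for \( j \neq j' \), then \( \alpha_j, \alpha_{j'} \) are linearly dependent in a \( \leq 1 \)-dimensional space, and a change of basis in \( \operatorname{GL}_2(F) \subseteq \operatorname{Aut}(T_j \oplus T_{j'}) \) (acting as scalars on each summand) reduces to the first claim. It follows that \( R|_S \iso \bigoplus_{i \in I} S_i \) for some \( I \subseteq \{1, \ldots, k\} \) with all \( \alpha_i \neq 0 \), and the scalar automorphisms \( F^{\times} \cdot \operatorname{id} \subseteq \operatorname{Aut}(S_i) \) allow us to rescale each \( \alpha_i \) independently, yielding a single isomorphism class per \( I \). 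This bounds the second case by \( 2^k \) classes.

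Altogether this yields at most \( k + 2^k \) indecomposable additively \( \Set \)-realizable representations of \( X \). The main subtlety lies in the second claim of the \( R(t) = F \) case: \( \End(T_j) \) need not be a field, but the scalar matrices inside \( \operatorname{GL}_2(F) \subseteq \operatorname{GL}_2(\End(T_j)) \) already suffice to eliminate a linearly dependent coordinate of \( \alpha \), so the argument works for arbitrary \( F \).
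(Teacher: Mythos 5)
Your proposal is correct and follows essentially the same strategy as the paper's proof: bound $\dim R(t)$ via Lemma~\ref{lem.terminal_is_1d}, encode the structure maps into $R(t)=F$ as a single map out of $\varinjlim R|_S$, use Proposition~\ref{prop.colim_1d} to see this map is determined up to scalar on each indecomposable summand, and rule out repeated summands by splitting off a diagonal copy on which the map vanishes. Your explicit bound $k+2^k$ and the remarks on why restriction preserves additive $\Set$-realizability and why working over $F$ rather than $\End(T_j)$ suffices are slightly more detailed than the paper's exposition, but the argument is the same.
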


\begin{proof}
Assume \( R \) is an indecomposable additively \( \Set \)-realizable representation of \( X \). By Lemma~\ref{lem.terminal_is_1d} we know that \( R(t) \) is zero or one-dimensional. If \( R(t) \) is zero, then \( R \) is really a representation of \( S \), and there are only finitely many indecomposable additively \( \Set \)-realizable representations of \( S \).

The only other possibility to consider is \( R(t) = F \). We will show that there are also only finitely many possibilities in this case, by establishing the following two statements.

\paragraph{Claim 1}
\( R \) is uniquely determined by its restriction \( \funct{res}(R) \) to \( S \).

\paragraph{Claim 2}
The restriction \( \funct{res}(R) \) is multiplicity free, that is there are no two isomorphic summands in a decomposition of \( \funct{res}(R) \) into indecomposables.

Once these two claims are established we can easily conclude that there are only finitely many possibilities for indecomposable \( R \), since there are only finitely many multiplicity free representations of \( S \).

For both claims, note that \( R \) is given by \( \funct{res}(R) \), together with information on all the structure maps from other objects to \( R(t) = F \). By definition of the colimit, giving this collection of structure maps is equivalent to giving a single linear map
\[ \varinjlim_{s \in S} R(s) \to R(t). \]
Note that \( \varinjlim \) is an additive functor, so it can be computed summand-wise. Moreover, by Proposition~\ref{prop.colim_1d}, for any indecomposable direct summand \( R' \) of \( \funct{res}(R) \) we have \( \varinjlim_{s \in S} R'(s) \in \{ 0, F \} \). If \( \varinjlim_{s \in S} R'(s) = 0 \) then this summand is not connected to \( R(t) \) at all, contradicting the indecomposability of \( R \). If \( \varinjlim_{s \in S} R'(s) = F \) then there is a non-zero linear map \( \varinjlim_{s \in S} R'(s) \to R(t) \), and this linear map is unique up to multiplying \( R' \) by a non-zero scalar. This establishes Claim 1 above.

For Claim 2, assume contrarily that two copies of an indecomposable \( R' \) appear in a decomposition of \( \funct{res}(R) \), with structure maps given by two scalar multiplications \( \alpha_i \colon \varinjlim_{s \in S} R'(s) \to R(t) = F \). Then there is a diagonal direct summand \( \left( \begin{smallmatrix} \alpha_2 \\ - \alpha_1 \end{smallmatrix} \right) \colon R' \to {R'}^2 \) on which the structure map vanishes, that is a direct summand that splits off. This contradicts the indecomposability of \( R \). Thus there cannot be two isomorphic summands in a decomposition of \( \funct{res}(R) \), establishing Claim 2.
\end{proof}

A \emph{tree quiver} is a quiver $Q$ for which the underlying undirected graph is a tree in the graph theoretic sense. 
\begin{corollary}
\label{cor.treequiver}
Let \( Q \) be a tree quiver with a single sink $s$. Then there are only finitely many additively \( \Set \)-realizable representations of \( Q \).
\end{corollary}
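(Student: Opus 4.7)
The proof will proceed by induction on the number of vertices of $Q$, using Theorem~\ref{thm.add_terminal} to execute the inductive step. The base case of a single vertex is immediate: the unique indecomposable representation is one-dimensional and trivially lies in the essential image of $\funct{free}$.

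For the inductive step, I first verify that $s$ is a terminal object in the path category of $Q$. Since the underlying graph is a tree, there is at most one directed path between any two vertices, so every hom-set has cardinality at most one. Moreover, starting from any vertex $v$ and iteratively following outgoing arrows produces, by finiteness and acyclicity, a directed path ending at a sink; since $s$ is the unique sink, this path terminates at $s$. Hence $\Hom(v,s)$ is a singleton for every $v$.

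With terminality in hand, I would apply Theorem~\ref{thm.add_terminal} to $X = Q$ and $t = s$. The main content of the proof is then the verification that the full subcategory $S = Q \setminus \{s\}$ — equivalently, the path category of the subquiver obtained by removing $s$ and the arrows into it — has only finitely many indecomposable additively $\set$-realizable representations. Its connected components $Q_{w_1}, \ldots, Q_{w_k}$ are subtrees of $Q$, and since an indecomposable over a disjoint union of categories is supported on a single component, it suffices to show that each $Q_w$ is itself a tree quiver with a unique sink. Let $w$ denote the vertex of $Q_w$ adjacent to $s$ in $Q$, which is unique by the tree structure of $Q$; the incident edge is $w \to s$ because $s$ is a sink. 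For any other vertex $v \in Q_w$, the unique undirected $v$--$s$ path in $Q$ passes through $w$, so $v$ has no edge to $s$ in $Q$, and consequently its outgoing arrows in $Q_w$ coincide with those in $Q$. Since $v$ is not a sink of $Q$, it is not a sink of $Q_w$ either. As $Q_w$ is a nonempty finite acyclic quiver it must possess at least one sink, and the only candidate is $w$; thus $w$ is the unique sink of $Q_w$.

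The inductive hypothesis now applies to each $Q_{w_i}$, which has strictly fewer vertices than $Q$, yielding finitely many indecomposable additively $\set$-realizable representations on $S$; Theorem~\ref{thm.add_terminal} then transfers this conclusion to $Q$. The only genuinely subtle step in the argument is the component-wise preservation of the ``tree with unique sink'' hypothesis when the sink is deleted; everything else is bookkeeping and a direct appeal to previously established results.
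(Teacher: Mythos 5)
Your proof is correct and follows the paper's argument exactly: induction on the number of vertices via Theorem~\ref{thm.add_terminal}, together with the observation that deleting the unique sink leaves a disjoint union of smaller tree quivers, each again with a unique sink. You have simply made explicit the details the paper leaves to the reader (that $s$ is terminal in the path category, and that each component of $Q\setminus\{s\}$ inherits a unique sink), and these verifications are all sound.
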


\begin{proof}
This follows from Theorem~\ref{thm.add_terminal} by induction. Note that \( Q \setminus \{ s \} \) is a disjoint union of smaller tree quivers each having a unique sink.
\end{proof}

A version of this corollary giving an explicit classification of indecomposables has been obtained independently in \cite{bindua2024decomposing}.

\subsection{Restrictions and edge contractions}
\label{sec.edge_contractions} In this section, we drop the restriction that $X$ is finite.

The following diagram shows the Hasse diagram of a poset \(X\) and its relationship to \(D_4\) via edge contractions. Specifically, we introduce an intermediate step by defining a pre-ordered set \(\widehat{X}\), in which the two contracted arrows are made invertible.  

\[ \begin{tikzcd}[sep=3mm]
X&& \circ\ar[d] &&&&\widehat{X}&& \circ \ar[d] && && D_4 & \circ\ar[d] &  \\
\circ \ar[r] & \circ & \circ \ar[l] \ar[r] & \circ & \circ \ar[l] && \circ \ar[r] & \circ\ar[r] & \circ \ar[l, bend left]\ar[r, bend right] & \circ \ar[l] & \circ \ar[l] & & \circ \ar[r] & \circ & \ar[l] \circ
\end{tikzcd} \]

We have seen that in \(D_4\), with this specific arrow orientation, there exists an indecomposable that is not additively \(\Set\)-realizable. Does this imply that \(X\) also contains an indecomposable outside the additive image of \(\Set\)? In this section, we show that this is indeed the case.

First, we consider the general setting of a functor \( \funct{F} \colon X \to Y \) of small categories. For any other category \( \mathbf{T} \) we have the induced restriction functor \( \rep(Y, \mathbf{T}) \to \rep(X, \mathbf{T}) \) obtained by pre-composition with $\funct{F}$. 

\begin{lemma} \label{lemma:restriction_preserves_realizable}
Let \( \funct{F} \colon X \to Y \) be a functor. Then the induced functor \( \rep(Y, \VectF) \to \rep(X, \VectF) \) preserves additive \( \Set \)-realizability. 
\end{lemma}

\begin{proof}
Assume \( R \in \rep(Y, \VectF) \) is additively realizable, i.e. $R\oplus R' \cong \funct{free} \circ S$ for some $S\in \rep(Y, \Set)$. Then, 
since direct sums are taken pointwise,
\[(R\circ \funct{F})\oplus (R'\circ \funct{F}) = (R\oplus R')\circ \funct{F} \cong \funct{free} \circ S \circ\funct{F}.\]
It follows that $R\circ \funct{F}$ is additively $\Set$-realizable.
\end{proof}

\begin{proposition} \label{prop:restriction_preserves_realizability}
Let \( \funct{F} \colon X \to Y \) be a functor such that the induced functor \( \rep(Y, \VectF) \to \rep(X, \VectF) \) is fully faithful. Then the induced functor both \emph{preserves} and \emph{reflects} additive \( \Set \)-realizability. That is, a representation \( R \in \rep(Y, \VectF) \) is additively \( \Set \)-realizable if and only if \( R \circ \funct{F} \in \rep(X, \VectF) \) is.
\end{proposition}

\begin{proof}
The ``preserves'' is just a restatement of Lemma~\ref{lemma:restriction_preserves_realizable}. To prove the other direction, we recall from Corollary~\ref{cor.criterion_for_add_image} that $R\circ \funct{F}$ is additively $\Set$-realizable if and only if precomposing the counit $\varepsilon$ of the adjunction $\funct{free} \dashv \funct{forget}$ with $R\circ \funct{F}$, 
\[ \varepsilon \circ R\circ \funct{F} \colon \funct{free} \circ \funct{forget} \circ R \circ \funct{F} \to R\circ \funct{F},\]
yields a split epimorphism. Let 
\[\phi \colon  R\circ \funct{F} \to  \funct{free}\circ \funct{forget}  \circ R\circ \funct{F}\]
be such a splitting. 
We get from the fullness of pre-composition with $\funct{F}$ that  $\phi = \psi \circ \funct{F}$ for some $\psi \colon R \to  \funct{free}\circ \funct{forget} \circ R$. 
Now consider the horizontal composition of natural transformations
$(\varepsilon \circ R) \circ \psi \colon R \to R$.
Precomposing with $\funct{F}$ yields
%
\[((\varepsilon \circ R) \circ \psi)\circ \funct{F} = (\varepsilon \circ R \circ \funct{F}) \circ (\psi\circ \funct{F}) = (\varepsilon \circ R \circ \funct{F}) \circ \phi = {\rm id}_{R\circ \funct{F}}.\]
Since precomposition with $\funct{F}$ was assumed to be faithful, it follows that 
\[(\varepsilon \circ R) \circ \psi = {\rm id}_{R}. \qedhere \]
\end{proof}

We summarize two immediate implications of the previous proposition in the following corollary. 

\begin{corollary} \label{cor:transfer_to_localization}
Let \( \funct{F} \colon X \to Y \) be a functor such that the induced functor \[ \rep(Y, \VectF) \to \rep(X, \VectF) \qquad\qquad R\mapsto R\circ \funct{F}\] is fully faithful.

\begin{itemize}
\item If there are only finitely many indecomposable additively \( \Set \)-realizables in \( \rep(X, \vectF) \) then the same is true for \( \rep(Y, \vectF) \).
\item If all indecomposables in \( \rep(X, \vectF) \) are additively \( \Set \)-realizable then the same is true for \( \rep(Y, \vectF) \).
\end{itemize}
\end{corollary}

The following proposition provides a (general type of) situation in which the assumptions of Proposition~\ref{prop:restriction_preserves_realizability} and Corollary~\ref{cor:transfer_to_localization} are met.

\begin{proposition} \label{prop:induces_fully_faithful_restriction}
Let \( \funct{F} \colon X \to Y \) be a functor, which is bijective on objects, and such that every morphism in $Y$ can be written as a finite composition $\beta_1\circ \cdots \circ \beta_k$ where each $\beta_i$ equals $\funct{F}(\alpha)$ or $\funct{F}(\alpha)^{-1}$ for some morphism $\alpha$ in $X$.  Then, the functor \( \rep(Y, \VectF) \to \rep(X, \VectF) \) is fully faithful.
\end{proposition}

\begin{proof}
For two representations \( R \) and \( S \) we have
\[ \Hom_{\rep(Y, \VectF)}(R, S) = \{ (\varphi_v)_{v \in \operatorname{Obj}(Y)} \mid \forall v \overset{y}{\to} w \in Y \colon \varphi_w \circ R(y) = S(y) \circ \varphi_v \}. \]
Clearly it suffices to check the commutativity condition on a collection of morphisms from which every other morphism can be expressed as a finite composition.  Moreover, if the commutativity condition holds for an isomorphism, then it also holds for the inverse of this isomorphism. These two statements, together with the assumption of the proposition, mean that we may rewrite the right-hand side above as
\[\{ (\varphi_v)_{v \in \operatorname{Obj}(Y)} \mid \forall v \overset{x}{\to} w \in X \colon \varphi_{\funct{F}(w)} \circ R(\funct{F}(x)) = S(\funct{F}(x)) \circ \varphi_{\funct{F}(v)} \}. \]
This is precisely the condition describing \( \Hom_{\rep(X, \VectF)}(R \circ \funct{F}, S \circ \funct{F}) \).
\end{proof}

Proposition~\ref{prop:induces_fully_faithful_restriction} covers two natural settings:
\begin{itemize}
\item Localizations: certain morphisms of the original category \( X \) are made invertible, and
\item Quotients: certain morphisms of the original category \( X \) are identified with each other. 
\end{itemize}

Now let us consider the particularly nice situation of quivers and posets and our initial question concerning edge contractions.

\begin{definition} \label{def:edge_contraction_quiver}
Let \( X \) be a quiver, and \( \alpha \) an arrow between two distinct objects in \( X \) (i.e., not a self-loop). The \emph{edge contraction} of \( X \) at \( \alpha \) is given by the quiver $Y$ obtained from \( X \) by removing the arrow \( \alpha \) and identifying its two end-points.
\end{definition}

\begin{definition}
Let \( X \) be a poset, and \( v \lneq w \) such that there are no further vertices between \( v \) and \( w \). The \emph{edge contraction} of \( X \) at \( v \lneq w \) is the poset $Y$ obtained from \( X \) by
\begin{itemize}
\item identifying \( v \) and \( w \), and
\item declaring \( x \leq y \) whenever \( x \leq w \) and \( v \leq y \) (in addition to the pre-existing relations from \( X \)).
\end{itemize}
\end{definition}

Observe that by treating $X$ as a category in either of the two cases, we have a functor $\funct{c}\colon X\to Y$ induced by mapping the two endpoints $v$ and $w$ of the contracted edge to the same point in $Y$. The pre-composition functor $R\mapsto R\circ \funct{c}$ thus assigns to $v$ and $w$ the same vector space, and the identity map to the contracted edge.

\begin{theorem}
\label{thm.edgecontraction}
Let \( X \) be a quiver or a poset, and \( \funct{c}\colon X \to Y \) an edge contraction. Then the restriction functor \[ \rep(Y, \VectF) \to \rep(X, \VectF) \qquad \qquad R\mapsto R\circ \funct{c}\] preserves and reflects additive \( \Set \)-realizability. In particular, 
\begin{itemize}
\item If there are only finitely many indecomposable additively \( \Set \)-realizables in \( \rep(X, \vectF) \) then the same is true for \( \rep(Y, \vectF) \).
\item If all indecomposables in \( \rep(X, \vectF) \) are additively \( \Set \)-realizable then the same is true for \( \rep(Y, \vectF) \).
\end{itemize}
\end{theorem}

\begin{proof}
We prove the claim for \( X \) being a poset; the case for quivers follows analogously by adjusting the notation accordingly.

Although our setting is not directly covered by the propositions above, the following variation applies: Let \( \widehat{X} \) be the minimal preorder containing all the relations in \( X \) in addition to the relation \( w \leq v \). Then the canonical functor \( X \to \widehat{X} \) satisfies the hypotheses of Proposition~\ref{prop:induces_fully_faithful_restriction}, which implies that the conditions of Proposition~\ref{prop:restriction_preserves_realizability} hold as well. 

Finally, the functor $\funct{c}\colon X\to Y$ factors through a functor $\hat{\funct{c}}\colon \widehat{X}\to Y$ by mapping $w\leq v$ to the same identity morphism as $v\leq w$. We observe that this functor \( \hat{\funct{c}} \) is an equivalence. In particular, it induces an equivalence of categories $\rep(Y, \VectF) \to \rep(\widehat{X}, \VectF)$. By Proposition~\ref{prop:restriction_preserves_realizability}, it follows that the first part, and therefore both parts, of the following composition preserve and reflect additive $\Set$-realizability:
\[\rep(Y, \VectF) \to \rep(\widehat{X}, \VectF) \to \rep(X, \VectF). \qedhere \]
\end{proof}

\begin{example} \label{example:D_n_not_realizable}
Let \( X \) be a quiver of type \( D_n \), such that all arms contain at least one arrow oriented inwards. Equivalently, \( X \) has a quiver of type \( D_4 \) oriented as in Example~\ref{ex.D4_inward} as an iterated edge contraction (contracting all arrows except the one ingoing arrow for each arm). Then there is an indecomposable in \( \rep(X, \vectF) \) which is not additively \( \Set \)-realizable.
\end{example}

\begin{remark}
Comparing Theorem~\ref{thm.edgecontraction} to Question~\ref{questions} one may wonder if the corollary should also contain a statement about when all indecomposable additively \( \Set \)-realizable representations are indicator representations. However, the statement one might suspect is not correct: For \( X \) the poset category \( \{ 1, 2, 3 \} \) one easily sees that all indecomposables are indicator representations. But if we localize the morphism \( 1 \leq 3 \), the resulting category can be depicted as
\[ \begin{tikzcd}
1 \ar[r,"\alpha"] & 2 \ar[r,"\beta"] & 3 \ar[ll,bend left,"(\beta \alpha)^{-1}"]
\end{tikzcd} \]
which we will see in Section~\ref{subsect:only_thin} to have non-indicator representations even among its indecomposables which are additively \( \Set \)-realizable. 
\end{remark}

\subsection{Reflection functors}
\label{sect.APR}
In this section, we will show that certain reflection functors (also known as \emph{Auslander--Platzeck--Reiten tilts}) preserve (additive) \( \Set \)-realizability.

In this section we assume our category to be the path category of a quiver, and hence denote it by \( Q \) rather than \( X \). 

Let \( d \) be a source of a quiver \( Q \). We denote by \( \mu_d( Q) \) the quiver obtained from \( Q \) by reversing all arrows starting in \( d \). For \( R \in \rep( Q, \vectF ) \) we denote by \( \mu_d(R) \) the representation of \( \mu_d(Q) \) given on vertex \( v \) by
\[ \mu_d(R)(v) = \begin{cases} R(v) & \text{ for } v \neq d \\ \operatorname{Cok}[ R(d) \to \bigoplus_{\alpha \colon d \to ?} R({\operatorname{target}}(\alpha))] & \text{ for } v = d \end{cases} \]
Here the map of which we take the cokernel is given, componentwise, by the linear maps corresponding to all the arrows out of vertex \( d \). Clearly, for any arrow \( \alpha \) starting in \( d \), we have a natural linear map from \( R(\operatorname{target}(\alpha)) \) to this cokernel --- this natural linear map will be the structure map of the reversed arrow in \( \mu_d(R) \).
\begin{theorem}[Auslander--Platzeck--Reiten]
\label{thm.APR}
The construction \( \mu_d \) as described above induces an equivalence
\begin{align*}
& \{ R \in \rep( Q, \vectF) \mid S_d \text{ is not a direct summand of \( R \)} \}  \\
& \longrightarrow \{ R \in \rep( \mu_d(Q), \vectF) \mid S_d \text{ is not a direct summand of \( R \)} \},
\end{align*}
where \( S_d \) denotes the simple representation concentrated at vertex \( d \) for the respective quivers.
\end{theorem}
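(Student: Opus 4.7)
The plan is to construct an explicit quasi-inverse by applying a ``sink'' analog of $\mu_d$ and to verify that, on the stated subcategories, the two functors invert each other. First, for any quiver $Q'$ with a sink $d'$, I would define a functor $\mu^-_{d'} \colon \rep(Q', \vect) \to \rep(\mu_{d'}(Q'), \vect)$ by setting $\mu^-_{d'}(R')(v) = R'(v)$ for $v \neq d'$ and
$$\mu^-_{d'}(R')(d') = \Ker\Bigl[\bigoplus_{\beta \colon ? \to d'} R'(\operatorname{source}(\beta)) \to R'(d')\Bigr],$$
with the structure map of each reversed arrow given by the projection from this kernel to the corresponding summand $R'(\operatorname{source}(\beta))$. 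Applied to $Q' = \mu_d(Q)$, in which $d$ is now a sink, this produces a functor $\mu^-_d \colon \rep(\mu_d(Q), \vect) \to \rep(Q, \vect)$ going back.

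A direct computation shows $\mu_d(S_d) = 0$ (the relevant cokernel is the cokernel of a map out of $F$ into the zero space) and dually $\mu^-_d(S_d) = 0$, matching the exclusion of $S_d$ from both sides of the claimed equivalence.

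The heart of the proof is the following characterization: for $R \in \rep(Q, \vect)$, the simple $S_d$ is a direct summand of $R$ if and only if the canonical map $\phi_R \colon R(d) \to \bigoplus_{\alpha \colon d \to ?} R(\operatorname{target}(\alpha))$ fails to be injective. Indeed, writing $R(d) = \Ker\phi_R \oplus K'$ for any linear complement, the subrepresentation supported only at $d$ with value $\Ker\phi_R$ splits off, with complementary summand obtained by replacing $R(d)$ with $K'$ and leaving all other vertices untouched (this is well-defined because $d$ is a source, so there are no incoming structure maps at $d$ to worry about); the converse is immediate. Dually, for $R' \in \rep(\mu_d(Q), \vect)$, the simple at $d$ is a summand if and only if $\bigoplus_\beta R'(\operatorname{source}(\beta)) \to R'(d)$ fails to be surjective. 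On the $S_d$-free subcategory of $\rep(Q, \vect)$ we therefore have a short exact sequence
$$0 \to R(d) \xrightarrow{\phi_R} \bigoplus_\alpha R(\operatorname{target}(\alpha)) \to \mu_d(R)(d) \to 0,$$
and the kernel defining $\mu^-_d\mu_d(R)(d)$ is precisely $\Im\phi_R$, which $\phi_R$ identifies with $R(d)$; this identification is compatible with the projections to each $R(\operatorname{target}(\alpha))$ by construction, yielding a natural isomorphism $\mu^-_d \circ \mu_d \cong \operatorname{id}$ on this subcategory. The symmetric argument establishes $\mu_d \circ \mu^-_d \cong \operatorname{id}$ on the $S_d$-free subcategory of $\rep(\mu_d(Q), \vect)$.

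The main obstacle is essentially bookkeeping: one must verify that the canonical identifications at vertex $d$ are compatible with the rearranged structure maps along the incident arrows, and check naturality of the resulting isomorphisms in morphisms of representations. Both are routine once the kernel/cokernel dichotomy characterizing $S_d$-free representations is in place.
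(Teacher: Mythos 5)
Your argument is correct. The paper does not actually prove this statement --- it is quoted as the classical Auslander--Platzeck--Reiten/Bernstein--Gelfand--Ponomarev reflection theorem --- and what you have written is precisely the standard proof via the inverse reflection functor at the sink. Your key lemma (that $S_d$ is a summand of $R$ iff the map $R(d)\to\bigoplus_{\alpha\colon d\to ?}R(\operatorname{target}(\alpha))$ is not injective, and dually at the sink) is right, and it is worth noting explicitly that both functors automatically land in the $S_d$-free subcategories: the cokernel quotient map is always surjective and the kernel inclusion always injective, so $\mu_d(R)$ and $\mu_d^-(R')$ never acquire $S_d$ as a summand. The remaining verifications (functoriality on morphisms via the induced maps on kernels and cokernels, and naturality of the isomorphism given at $d$ by $\phi_R\colon R(d)\xrightarrow{\sim}\Im\phi_R$) are indeed routine, exactly as you say.
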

First we observe that this equivalence need not respect additive \( \Set \)-realizability. 
\begin{example}
\[\begin{tikzcd}[column sep=12mm,row sep=12mm,ampersand replacement=\&] \& F \& \\ F \& F\ar[u, "1"]\ar[l,swap,"1"]\ar[r,"1"] \& F\end{tikzcd}\xmapsto{\qquad\mu_d\qquad} \begin{tikzcd}[column sep=12mm,row sep=12mm,ampersand replacement=\&] \& F\ar[d,"{\begin{bmatrix} 1 \\ 1\end{bmatrix}}"] \& \\F\ar[r,"{\begin{bmatrix} 1 \\ 0\end{bmatrix}}"]  \& F^2\& F\ar[l,swap,"{\begin{bmatrix} 0 \\ 1\end{bmatrix}}"]\end{tikzcd} \]
The left representation is trivially in the essential image of $\funct{free}_*$ while we know from Example~\ref{ex.D4_inward} that the right representation is not additively $\Set$-realizable.
\end{example}

(Additive) \( \Set \)-realizability is preserved if we limit the degree of the source to at most 2. 

\begin{theorem} \label{thm.APR_preserves_induced}
Let \( Q \) be a quiver, and \( d \) a source of \( Q \) with at most two arrows starting in it. If $R\in \rep(Q, \vectF)$ is in the (additive) image of $\funct{free}_*$, then so is $\mu_d(R)$. In particular, if the functor \( \funct{free}_* \colon \rep( Q, \pset) \to \rep( Q, \vectF ) \) is essentially (additively) surjective, then so is the functor \( \funct{free}_* \colon \rep( \mu_d(Q), \pset) \to \rep( \mu_d(Q), \vectF ) \).
\end{theorem}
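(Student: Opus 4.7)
I would reduce to the case $R = \funct{free}_*(S)$ for $S \in \rep(Q, \pset)$ and then exhibit a near-multiplicative basis of $\mu_d(\funct{free}_*(S))$ with scalars in $\{0, \pm 1\}$, so that Theorem~\ref{thm.isinduced2} applies. The reduction is immediate from additivity of $\mu_d$: at vertices $v \neq d$ the construction is the identity, while at $d$ it is a cokernel of a direct sum, which commutes with direct sums. Thus whenever $R \oplus R' \cong \funct{free}_*(S)$, $\mu_d(R)$ is a direct summand of $\mu_d(\funct{free}_*(S))$, and it suffices to establish additive realizability of the latter.

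The cases where $d$ has zero or one outgoing arrow are straightforward. For zero outgoing arrows $\mu_d$ is the identity. For a single outgoing arrow $\alpha\colon d \to t$ with set-level map $f\colon S(d) \to S(t)$, the cokernel of $\funct{free}_*(f)$ coincides with $\funct{free}_*$ applied to the pointed set $S(t)/f(S(d))$ (obtained by collapsing $f(S(d))$ to the basepoint); this places $\mu_d(\funct{free}_*(S))$ already in the essential image of $\funct{free}_*$ on $\mu_d(Q)$, and hence the summand $\mu_d(R)$ in the additive image.

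The main case is two outgoing arrows $\alpha_i\colon d \to t_i$ with set maps $f_i$. Then $\mu_d(\funct{free}_*(S))(d)$ is $\funct{free}_*(S(t_1)) \oplus \funct{free}_*(S(t_2))$ modulo the subspace generated by $e_{f_1(s)} + e_{f_2(s)}$ for $s \in S(d)\setminus\{*\}$, under the convention $e_* = 0$. I would form the auxiliary graph $G$ on vertex set $(S(t_1)\setminus\{*\}) \sqcup (S(t_2)\setminus\{*\}) \sqcup \{\bullet\}$ with one edge $\{f_1(s), f_2(s)\}$ per $s \in S(d)\setminus\{*\}$, identifying both basepoints with the extra vertex $\bullet$. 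Restricted to $(S(t_1)\setminus\{*\}) \sqcup (S(t_2)\setminus\{*\})$ this graph is bipartite. For each connected component of $G$ not containing $\bullet$, choose a representative vertex with sign $+1$ and propagate signs $\pm 1$ to other vertices via the relation $e_{f_1(s)} = -e_{f_2(s)}$; bipartiteness makes this consistent since every cycle has even length, so the product of signs around it is $+1$. Vertices in the component of $\bullet$ are forced to zero in the cokernel. The chosen representatives thus constitute a basis of $\mu_d(\funct{free}_*(S))(d)$, and each basis element of $\funct{free}_*(S(t_i))$ is sent to $0$ or to $\pm 1$ times a basis element.

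Combined with the unchanged multiplicative structure maps at all $v \neq d$, this exhibits a near-multiplicative basis of $\mu_d(\funct{free}_*(S))$ with scalars in $\{0, \pm 1\}$. As $\pm 1$ are second roots of unity in every field (coinciding with $+1$ in characteristic $2$), Theorem~\ref{thm.isinduced2} applied with $n = 2$ yields additive $\Set$-realizability of $\mu_d(\funct{free}_*(S))$, and hence of its summand $\mu_d(R)$. The ``in particular'' statement follows by decomposing any $R' \in \rep(\mu_d(Q), \vect)$ as $R'_1 \oplus S_d^m$ with $R'_1$ free of $S_d$-summands; the Auslander--Platzeck--Reiten equivalence (Theorem~\ref{thm.APR}) provides $R \in \rep(Q, \vect)$ without $S_d$-summand satisfying $\mu_d(R) = R'_1$, the hypothesis on $Q$ gives additive realizability of $R$, the first part then transfers this to $R'_1$, and $S_d$ is an indicator representation already in the essential image by Corollary~\ref{cor.const_essimg}. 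The main subtlety worth checking carefully is consistency of the sign propagation, which is precisely where bipartiteness (and hence the restriction to at most two outgoing arrows) enters.
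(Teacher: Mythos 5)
Your proof is correct in substance but takes a genuinely different route from the paper's at the key step. The paper's proof observes that the cokernel (one arrow) resp.\ the push-out (two arrows) can already be formed in $\pset$, and that $\funct{free}_*$, being a left adjoint, commutes with these colimits; hence $\mu_d(\funct{free}_*(S))$ is identified with $\funct{free}_*(\mu_d(S))$ and lands directly in the essential image, after which (additive) images are preserved formally. You instead compute the cokernel of $\funct{free}_*(S(d)) \to \funct{free}_*(S(t_1)) \oplus \funct{free}_*(S(t_2))$ by hand, organize the relations $e_{f_1(s)} + e_{f_2(s)} = 0$ into a bipartite graph, and propagate signs to produce a near-multiplicative basis with scalars in $\{0,\pm1\}$, then invoke Theorem~\ref{thm.isinduced2} with $n=2$. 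Your reduction to $R = \funct{free}_*(S)$, the one-arrow case, and the deduction of the ``in particular'' statement via Theorem~\ref{thm.APR} and Corollary~\ref{cor.const_essimg} all match the paper's logic; the sign-consistency argument via bipartiteness is the correct explanation of why at most two outgoing arrows are needed, and it is sound.

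One point deserves flagging. In the two-arrow case your argument only places $\mu_d(\funct{free}_*(S))$ in the \emph{additive} image (because of the residual $-1$'s), so you do not recover the essential-image half of the statement there, whereas the paper's proof does. But this is not a defect of your computation: the $\pm1$'s are exactly the sign by which the cokernel of the componentwise map $(f_1,f_2)$ (the paper's definition of $\mu_d(R)(d)$) differs from the push-out (the cokernel of $(f_1,-f_2)$, which is what the paper's proof actually identifies with a colimit in $\pset$). With the literal cokernel convention the essential image is in general \emph{not} preserved --- for the two-arrow Kronecker-type quiver $d \rightrightarrows t$ and $S$ with $f_1 = f_2$ one gets the representation $(1,-1)$ of the reversed quiver, which has no multiplicative basis when $\operatorname{char} F \neq 2$ --- so your more careful bookkeeping surfaces a small sign imprecision in the paper. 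All the conclusions actually used downstream (the additive-image and additive-surjectivity transfers) are fully established by your argument. The only other quibble is trivial: if $d$ has no outgoing arrows, $\mu_d(R)(d)$ is the cokernel of $R(d) \to 0$, i.e.\ $0$, not $R(d)$; this does not affect realizability.
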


\begin{proof}
The proof comes down to observing that in the case of one or two arrows starting in a source $d$, the cokernel construction in $\mu_d$ is actually a colimit. The result then follows from preservation of colimits. 


For the case of one arrow $\alpha$ starting in \( d \), note that \( \pset \) has cokernels (given explicitly by identifying all elements of the image of a map with the base point). Thus, for a representation \( S \colon Q \to \pset \) we can form \( \mu_d(S) \) on objects as
\[ \mu_d(S)(v) = \begin{cases} S(v) & \text{for } v \neq d \\ \operatorname{Cok} S(\alpha) & \text{for } v = d \end{cases}. \]
On arrows, $\mu_d(S)(\alpha^{-1})$ is canonical projection  
\[S(\operatorname{target}(\alpha))\to \operatorname{Cok} S(\alpha),\] while the other maps are inherited from $S$.

For the case of two arrows $\alpha$ and $\beta$ starting in \( d \), note that the cokernel in the definition of \( \mu_d \) at the level of vector spaces is a push-out. Importantly, the category $\pset$ also admits push-outs (and more generally all small colimits), constructed as in $\set$ but with the additional identification of basepoints. Hence, for a representation \( S \colon Q \to \pset \) we can form \( \mu_d(S) \) as
\[ \mu_d(S)(v) = \begin{cases} S(v) & \text{for } v \neq d \\ \text{pushout of } S(\alpha) \text{ and }  S(\beta) & \text{for $v = d$ } \end{cases} \]
where the maps $\mu_d(S)(\alpha^{-1})$ and $\mu_d(S)(\beta^{-1})$ are given by canonical projections and the other maps are inherited from $S$. 


Since \( \funct{free}_* \) is a left adjoint functor, it commutes with cokernels and pushouts (it commutes with all colimits). Thus, in both cases, the following square commutes
\[
\begin{tikzcd}
\rep(Q, \vectF)\ar[r, "\mu_d"] & \rep(\mu_d(Q), \vectF) \\
\rep(Q, \pset)\ar[r,"\mu_d"]\ar[u, "\funct{free}_*"] & \rep(\mu_d(Q), \pset)\ar[u, "\funct{free}_*"]
\end{tikzcd}.
\]
In particular the linear (i.e, the upper, in the diagram) \( \mu_d \) maps preserves the (additive) images of the vertical functors.

The final claims follow since the reflection functor \( \mu_d \) is surjective on indecomposables except for the simple representation \( S_d \), which is always \( \pSet \)-realizable.
\end{proof}

In Section~\ref{subsec.d4} we will employ this theorem to transfer \( \Set \)-realizability from certain orientations of quivers of type \( \tilde{D}_4 \) to others.

\section{Additive images consisting of indicator representations}
\label{subsect:only_thin}

In this section, we give a necessary and sufficient criterion for the additive image to only consist of indicator representations.

Throughout this section, we assume that $X$ is a finite category.

\begin{theorem}
\label{thm.onlythin}
Let \( X \) be a finite category. Then every indecomposable additively \( \Set \)-realizable representation is an indicator representation if and only if \( X \) is equivalent to the poset category of a poset that is a disjoint union of
\begin{enumerate}
\item posets with a Hasse diagram of type \( A \),
\item posets with a single maximal element, such that the poset without the maximal element is a disjoint union of posets with a Hasse diagram of type \( A \) with unique minima.
\end{enumerate}
\end{theorem}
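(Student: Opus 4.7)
I would prove both directions of the equivalence separately. For the sufficiency, assume $X$ is equivalent to a disjoint union as described; since representations decompose along connected components, I may restrict attention to a single connected component. If the component is of type A (its Hasse diagram is a path, possibly zig-zag), then by Gabriel's theorem the indecomposables are interval modules on the underlying $A_n$ quiver, which are precisely the indicator representations. For form (2), let $t$ denote the unique maximum and let $R$ be an indecomposable additively realizable representation. Lemma~\ref{lem.terminal_is_1d} gives $\dim R(t) \le 1$. If $R(t) = 0$ then $R$ factors through $X \setminus \{t\}$, a disjoint union of type A posets, and the previous case applies.

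The heart of the sufficiency is the case $R(t) = F$. Claims 1 and 2 from the proof of Theorem~\ref{thm.add_terminal} give that $\funct{res}(R) = \bigoplus_i R'_i$ is multiplicity-free and that $R$ is reconstructed from $\funct{res}(R)$ together with scalars $\alpha_i$, each nonzero since otherwise that summand would split off and violate indecomposability. By the first sufficiency case applied inside $X \setminus \{t\}$, each $R'_i$ is an indicator on a connected convex subset $S_i$ of some type A component. I claim that the supports $S_i$ are pairwise disjoint: a vertex $v \in S_i \cap S_j$ would lie in a single type A component; then computing the unique structure map $v \to t$ via two different Hasse predecessors $w_1, w_2$ of $t$ in that component forces the images in $R(t) = F$ of the basis vectors of $R'_i(v) \oplus R'_j(v) = F^2$ to coincide in two incompatible ways, giving $\alpha_\ell = 0$ for some $\ell$. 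With disjoint supports in hand, rescaling so that $\alpha_i = 1$ exhibits $R$ as the indicator representation on the connected convex set $\bigcup_i S_i \cup \{t\}$.

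For necessity, by Corollary~\ref{cor.subcategory} it suffices to exhibit a non-indicator additively realizable indecomposable representation on a small full subcategory of $X$. If $X$ is not equivalent to a poset, a small subcategory contains either a non-identity finite-order endomorphism or a pair of parallel morphisms; in either case, the one-dimensional representation whose structure map is a primitive non-trivial root of unity has a near-multiplicative basis and is realizable by Corollary~\ref{cor.near_mult_basis_is_enough} (alternatively Theorem~\ref{thm.isinduced2}), but it is not an indicator. If $X$ is equivalent to a poset $P$ not of the allowed form, a combinatorial analysis shows that some connected component of $P$ contains, as a full subposet, one of the following minimal forbidden four-vertex patterns: the \emph{branching} poset $a<b<c$, $b<d$; the \emph{pyramid} poset $v_1<v_2$, $v_3<v_2$, $v_2<t$; or the \emph{inverted star} poset $v<u_1$, $v<u_2$, $v<u_3$. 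The underlying observation is that a failure of form (1) or (2) produces a non-maximal vertex of Hasse degree at least three in $P$ (cycles in the Hasse diagram can be resolved by passing to the highest vertex of the cycle, which gains a third Hasse neighbour above once any strictly higher element is adjoined), and such a vertex together with suitably selected Hasse neighbours yields one of the three configurations. On each of the three patterns, one exhibits an explicit indecomposable representation of dimension two at the branching vertex that admits a multiplicative basis after a change of basis; these are the representations already appearing in Sections~\ref{sec:terminalobj} and~\ref{subsec.d4}.

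The main obstacle I anticipate is the combinatorial step in the necessity direction, verifying that the three listed minimal forbidden configurations exhaust all cases and that in every non-allowed connected poset one of them does arise as a full subposet. The disjoint-support claim in the sufficiency also requires a careful case-by-case functoriality check along each zig-zag component of $X \setminus \{t\}$, using that a vertex in the interior of a zig-zag has two distinct upward paths reaching the Hasse predecessors of $t$ at the two endpoints.
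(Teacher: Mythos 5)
Your sufficiency argument is essentially sound and is in fact more explicit than the paper's treatment of that direction (the paper concentrates almost entirely on necessity); combining Gabriel's classification for the type \(A\) components with Lemma~\ref{lem.terminal_is_1d} and the two Claims from the proof of Theorem~\ref{thm.add_terminal} is the right tool for form (2). The genuine problems are in the necessity direction.

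First, the non-poset case. You propose to realize a one-dimensional representation whose structure map is a nontrivial root of unity. Over \( F = \mathbb{Z}/2\mathbb{Z} \) no such scalar exists, so this produces nothing for a pair of parallel arrows; and a nontrivial endomorphism monoid need not contain any non-identity element of finite \emph{multiplicative} order (take \( \{\operatorname{id}, e\} \) with \( e^2 = e \): a one-dimensional representation must send \( e \) to \( 0 \) or \( 1 \)). The paper sidesteps both issues: for monoids it linearizes the regular representation and observes that if every indecomposable summand were an indicator then every element of the monoid would have to act as the identity (Lemma~\ref{lemma:indicator_for_monoids}); for parallel arrows it uses the two-dimensional preprojective Kronecker representation \( F \to F^2 \) with structure maps \( e_1 \) and \( e_2 \), which lies in the image of \( \funct{free} \) over any field.

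Second, and more seriously, your combinatorial reduction is false. You claim that any connected poset outside the allowed class contains one of three four-vertex full subposets, on the grounds that a failure of form (1) or (2) produces a \emph{non-maximal} vertex of Hasse degree at least three. Two counterexamples: (i) the crown \( m_1, m_2 < M_1, M_2 \), whose Hasse diagram is a \(4\)-cycle, has every vertex of degree \(2\), yet it is forbidden --- it is the \( \widetilde{A}_3 \) quiver with alternating orientation, which carries non-indicator string modules (e.g.\ of dimension vector \( (2,1,1,1) \)), all of them \( \pset \)-realizable; your suggested fix via ``the highest vertex of the cycle'' fails here because the cycle has two maxima and need not sit below anything. (ii) The five-vertex poset used in the paper's proof, with \( b < t_1 \), \( b < t_2 \), and \( t_2 \) additionally above two further minimal elements: its only vertex of Hasse degree \( \geq 3 \) is the \emph{maximal} element \( t_2 \), and none of your three patterns embeds as a full subposet (no vertex has both a lower and two incomparable upper neighbours, none has both two incomparable lower and an upper neighbour, and none is below three incomparable elements), yet the paper exhibits an explicit non-indicator realizable representation on it. So the three four-vertex patterns do not exhaust the minimal forbidden configurations; the paper's case analysis genuinely needs the three forbidden \( D_4 \) orientations \emph{together with} the type \( \widetilde{A} \) cycles and this extra five-vertex poset before the structural conclusion can be drawn.
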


Let us first check that for the two families described in the theorem only indicator representations are indecomposable additively \( \Set \)-realizable.

\begin{proof}[Proof of \( \Leftarrow \)]
Suppose $X$ satisfies condition (1). In this case, every indecomposable representation is an indicator representation, and the result follows immediately (see also Proposition~\ref{prop.An}). 

Now, assume $X$ satisfies condition (2). Let $R$ be an indecomposable additively $\Set$-realizable representation, and let $t$ denote the maximal element of $X$. By Lemma~\ref{lem.terminal_is_1d}, we have $\dim R(t) \leq 1$. 

If $\dim R(t) = 0$, the result follows from the case of type $A$ quivers. Hence, we may assume $\dim R(t) = 1$. Consider any component \( C \) of the subposet \( X \setminus \{ t \} \). Each component $C$ is of one of the following two forms:
\[
\circ \to \cdots \to \circ \qquad \text{or} \qquad \circ \leftarrow \circ \leftarrow \cdots \leftarrow \circ \rightarrow \cdots \circ \rightarrow \circ.
\]

Consider the full subcategory of $X$ given as $Y = C \cup \{ t \}$. We claim that the restriction of $R$ to $Y$, denoted $R|_Y$, is indecomposable. Indeed, if $R|_Y \cong R' \oplus R''$ is a non-trivial decomposition, we may assume $\dim R'(t) = 0$. Then, any split monomorphism $R' \hookrightarrow R|_Y$ extends to a split monomorphism $R' \hookrightarrow R$ by setting $R'$ to be $0$ on all vertices outside $C$. This contradicts the indecomposability of $R$, so $R|_Y$ must be indecomposable.

If $C$ is of the left form, or if $C$ is of the right form with $R(s) = 0$ for the minimal element $s$, then $R|_Y$ (after ignoring the vertex $s$) is an indecomposable representation of a type $A$ quiver, and the result follows. It remains to consider the case where $C$ is of the right form and $\dim R(s) > 0$. This reduces to studying a representation of a quiver of type $\tilde{A}$ with a unique minimal and a unique maximal element, where the two maps from the minimal to the maximal vertex coincide since we assume a poset structure. That such a representation is interval-decomposable is well known from standard representation theory, but we include an argument for completeness. To this end, we distinguish two subcases:

\begin{enumerate}
\item If the map $R(s \to t)$ is non-zero, then pick \( m \in R(s) \) not being sent to zero by this map. We obtain a monomorphism from the indicator representation $I$ on $Y$ to $R|_Y$ defined by mapping $1 \in I(v)$ for $v \in C$ to $R(s \to v)(m)$. But since $I$ is an injective representation (as $Y$ has a unique maximal element), this monomorphism splits, and indecomposability of $R|_Y$ implies $R|_Y \cong I$.
\item If the map $R(s \to t)$ is zero, then pick any indecomposable summand \( R' \) of $R|_C$ supported on $s$. Since \( R' \) is an indicator representation it is generated by \( R'(s) = F \), and thus all elements of \( R'(v) \) for $v \in C$ are sent to zero by \( R(v \to t) \), this extends to a direct summand of \( R \). Hence, $R$ cannot be indecomposable in this case, contradicting the assumption. \qedhere
\end{enumerate}
\end{proof}

As a first step towards the converse, we consider the case of a category with only one object, that is, a monoid. Since the discussion of this case is quite independent of the rest of the proof, we present it as a separate lemma here.

\begin{lemma} \label{lemma:indicator_for_monoids}
Let \( X \) be a finite category with one object. Then all indecomposable additively \( \Set \)-realizable representations are indicator representations only if the category has no morphisms apart from the identity.
\end{lemma}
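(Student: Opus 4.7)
The plan is to prove the contrapositive: assuming that $X$ contains a non-identity endomorphism $m$ at its unique object, we will exhibit an indecomposable additively $\set$-realizable representation that is not an indicator representation. For a one-object category, the only indicator representations are the zero representation and the one-dimensional representation on which every morphism acts as $\operatorname{id}_F$; thus it suffices to produce any $\set$-realizable representation on which $m$ acts non-trivially, and then extract an indecomposable summand.

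The natural candidate is the regular representation. Since $X$ is finite, so is the monoid $M$ of endomorphisms at the unique object, and $S := M$ regarded as a left $M$-set via multiplication lies in $\rep(X, \set)$. Then $\funct{free}(S) = FM$ is a finite-dimensional representation, and the action of $m$ sends the basis vector corresponding to the identity $e \in M$ to the basis vector corresponding to $m \neq e$; in particular $m$ does not act as the identity on $FM$.

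Finally, decompose $FM = R_1 \oplus \cdots \oplus R_k$ into indecomposables using Krull--Schmidt in $\rep(X, \vect)$. Each $R_i$ is a direct summand of a representation in the image of $\funct{free}$, hence is additively $\set$-realizable. If every $R_i$ were an indicator representation, then $m$ would act as the identity on each summand, and therefore as the identity on $FM$, contradicting the calculation $m \cdot [e] = [m] \neq [e]$. Hence at least one $R_i$ is an additively $\set$-realizable indecomposable that is not an indicator representation, completing the contrapositive.

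There is no real obstacle to this argument; the only mild conceptual point is to notice that for a one-object category the indicator condition forces every endomorphism to act as $\operatorname{id}_F$, and this extremely rigid requirement is immediately falsified by the regular representation as soon as the monoid is non-trivial.
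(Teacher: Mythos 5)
Your proposal is correct and follows essentially the same route as the paper: both use the free linearization of the regular representation of the endomorphism monoid, observe that the only indicator representations of a one-object category act by the identity on every morphism, and conclude via the Krull--Schmidt decomposition that some indecomposable summand cannot be an indicator representation when a non-identity morphism exists. The paper phrases this directly (base changes preserve identities, so $m\cdot n=n$ for all $m,n$, forcing $M$ trivial) while you argue the contrapositive, but the content is identical.
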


\begin{proof}
For a monoid \( (M, \cdot) \) we always have the regular representation \( R \) in the set \( M \), given explicitly by
\[ M \to \set(M,M) \colon m \mapsto (m \cdot - ). \]
If all additively \( \Set \)-realizable representations are indicator representations, then so are the summands of
$\funct{free} \circ R$. Since there is only one object, there is only a single such indicator representation, which sends every morphism to the identity. Taking direct sums of copies of this representation does not change this behavior: every morphism is still mapped to the identity, only now acting on a larger vector space. Therefore, \( \funct{free} \circ R \) is isomorphic to some vector space with all morphisms mapped to identity. Note however that base changes do not affect identities, and thus \( R \) itself needs to map every morphism to the identity, in other words \( m \cdot n = n \) for all \( m, n \in M \). Choosing \( n = \operatorname{id} \) shows \( m = \operatorname{id} \), and thus \( M = \{ \operatorname{id} \} \) as claimed. 
\end{proof}

\begin{proof}[Proof of \( \Rightarrow \) in Theorem~\ref{thm.onlythin}]
By Lemma~\ref{lemma:indicator_for_monoids}, in conjunction with Corollary~\ref{cor.subcategory},  we can assume that the only morphisms from an object of \( X \) to itself are identities. If there are two objects with morphism between them in both directions then they thus necessarily compose to identities, so the two objects are isomorphic and up to equivalence we can remove one of them. Thus, up to equivalence we can assume \( X \) to not have any cycles of (non-identity) morphisms.

Next we observe that the category given by the Kronecker quiver $\begin{tikzcd} \circ\ar[r, bend left]\ar[r, bend right] & \circ \end{tikzcd}$ has non-indicator additively \( \Set \)-realizable representations, e.g.,

\[
\begin{tikzcd}[sep=7mm,ampersand replacement=\&]
  \{a,b\} \& \& \& \& F^2 \\
 ~\ar[rrrr,mapsto, shorten <= 30pt, shorten >= 30pt, "\funct{free}_*"] \& \&  \& \& ~\\
  \{a\} \arrow[uu,swap, bend right, "a\mapsto b"]\arrow[uu,"a\mapsto a", bend left]  \& \& \&  \& F\arrow[uu, "{\begin{bmatrix}0 \\ 1\end{bmatrix}}", bend left] \arrow[uu, swap, "{\begin{bmatrix}1 \\ 0\end{bmatrix}}", bend right]
\end{tikzcd}
\]
In fact, all preprojective representations of the Kronecker quiver are realizable.
This example can easily be extended to any number of parallel arrows, and using Corollary~\ref{cor.subcategory} also to any other category with at least two parallel arrows between some pair of distinct objects.
It now follows from Corollary~\ref{cor.subcategory} that there is at most one morphism between any two objects in \( X \), that is, \( X \) is a poset category.

Next, it is easy to check that any quiver of type \( D_4 \), except the one with all arrows pointing towards the middle, has a non-indicator  additively \( \Set \)-realizable representation; see  Proposition~\ref{prop:when_is_Dn_realizable}. It follows that
\begin{itemize}
\item No \( x  \in X \) is smaller than \( 3 \) or more pairwise incomparable elements of \( X \).
\item If \( x \in X \) is smaller than \( 2 \) incomparable elements then \( x \) is minimal.
\item If \( x \in X \) is greater than \( 2 \) incomparable elements then \( x \) is maximal.
\end{itemize}
Finally note that if an element is smaller than \( 2 \) incomparable elements, and one of those two in turn is bigger than at least \( 3 \) incomparable elements, then \( X \) has a subposet
\[ \begin{tikzcd}[column sep=5mm,row sep=5mm]
\circ && \circ \\
& \circ \ar[lu] \ar[ru] & \circ \ar[u] & \circ \ar[lu]
\end{tikzcd} \]
This poset has non-indicator additively \( \Set \)-realizable representations, e.g.,
\[ \begin{tikzcd}[column sep=8mm,row sep=10mm,ampersand replacement=\&]
F \& F^2  \ar[l, "\left({\begin{smallmatrix}1 & 1\end{smallmatrix}}\right)"] \ar[r, "\text{id}"]\&  F^2 \\
\& \& F \ar[u,"\left({\begin{smallmatrix}1 \\ 0 \end{smallmatrix}}\right)"] \& F \ar[lu,swap,"\left({\begin{smallmatrix}0 \\ 1\end{smallmatrix}}\right)"]
\end{tikzcd} \]

Therefore this cannot happen under the assumption that \( X \) does not have non-indicator additively \( \Set \)-realizable representations.

It follows that if there is an element in \( X \) which is smaller than two distinct maxima then \( X \) is of type \( A \) or \( \widetilde{A} \). Considering preprojective representations one easily sees that algebras of type \( \widetilde{A} \) have non-indicator \( \Set \)-realizable representations; e.g., the representation of the Kronecker quiver above easily extends to $\widetilde{A}$; see also Theorem~\ref{thm.Ainf}.

If \( X \) does not have an element which is smaller than two distinct maximal elements, and is connected, then \( X \) has a unique maximal element. Again invoking Corollary~\ref{cor.subcategory} we know that the same results hold for \( X' = X \setminus \{ \text{maximal element} \} \), with the additional condition that the elements smaller than any maximal element of \( X' \) are linearly ordered by the third bullet point above. By the first bullet point, at most two chains of descending elements from distinct maximal elements in $X'$ can intersect, and by the second bullet point, the point of intersection must be at a minimal element. It follows that \( X' \) is a disjoint union of type \( A \) quivers with unique sources.
\end{proof}

We believe that Theorem~\ref{thm.onlythin} should extend to infinite \( X \) for some suitable formulation of what it means for an infinite category to be of type \( A \). (The idea would be that our theorem restricts the possible finite subcategories of any such infinite \( X \), thus giving it little room to be of a different shape.)

\section{Quiver representations} \label{sec.quivers}

In this section, we only consider representations of quivers. Our aim is to provide a reasonably complete answer to Question~\ref{questions} for Dynkin and certain extended Dynkin quivers.

\subsection{Dynkin quivers}

The first point we want to make for Dynkin quivers is that it does not matter if we consider all linear representations or just finite-dimensional ones: By classical theory the indecomposables are finite-dimensional anyway.

\begin{theorem}
Let \( Q \) be a Dynkin quiver. Then the functor \( \funct{free}_* \colon \rep(Q, \Set) \to \rep(Q, \VectF) \) is essentially surjective (additively surjective) if and only if the functor \( \funct{free}_* \colon \rep( Q, \set) \to \rep( Q, \vectF) \) is.
\end{theorem}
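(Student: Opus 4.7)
The plan is to exploit that Dynkin quivers are of finite representation type, so that everything reduces to the finite-dimensional indecomposable case.

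The key structural input is that the path algebra $FQ$ of a Dynkin quiver is representation-finite (Gabriel's theorem), hence pure-semisimple by a classical theorem of Auslander: every $R \in \rep(Q, \Vect)$ decomposes as a direct sum $R \cong \bigoplus_i R_i$ with each $R_i$ finite-dimensional indecomposable. This is the only ingredient outside the paper, and is the only place the Dynkin hypothesis really enters; locating and citing this pure-semisimplicity result is the one non-formal step, and I expect it to be the main ``obstacle''---everything else assembles quickly from results already available.

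Given this decomposition, the additive surjectivity equivalence is straightforward. The implication from $\funct{free}\colon \rep(Q, \Set) \to \rep(Q, \Vect)$ being additively surjective to $\funct{free}\colon \rep(Q, \set) \to \rep(Q, \vect)$ being additively surjective is immediate from Proposition~\ref{prop.repinfsets}. For the converse, each finite-dimensional indecomposable $R_i$ is additively $\set$-realizable by hypothesis, so is a summand of $\funct{free}(S_i)$ for some $S_i \in \rep(Q, \set)$. Setting $S = \bigsqcup_i S_i \in \rep(Q, \Set)$ and using that $\funct{free}$ preserves coproducts exhibits $R$ as a summand of $\funct{free}(S)$.

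The essential surjectivity equivalence is handled the same way. For the $\Set$-to-$\set$ direction, if $R \in \rep(Q, \vect)$ is isomorphic to $\funct{free}(S)$ for some $S \in \rep(Q, \Set)$, then Proposition~\ref{prop:free-mult-basis} supplies a basis of $R$ permuted by every arrow, and the basis at each vertex (automatically finite since $R$ is finite-dimensional) defines an $S' \in \rep(Q, \set)$ with $\funct{free}(S') \cong R$; notice this half does not actually need the Dynkin hypothesis. For the $\set$-to-$\Set$ direction, decompose $R \in \rep(Q, \Vect)$ into finite-dimensional indecomposables $R_i \cong \funct{free}(S_i)$ with $S_i$ finite, and take $\bigsqcup_i S_i$ again.
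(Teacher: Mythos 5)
Your proposal is correct and follows essentially the same route as the paper: the paper likewise invokes Auslander's result that over a representation-finite algebra every module decomposes into finite-dimensional indecomposables (cited there as Corollary~4.8 of \emph{Representation theory of Artin algebras II}), and then concludes using that \( \Set \) has coproducts. Your write-up simply spells out the bookkeeping for both the essential and additive versions that the paper leaves implicit.
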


\begin{proof}
Since \( \rep( Q, \vectF) \) contains only finitely many indecomposable objects, by \cite{RepresentationTheoryOfArtinAlgebrasII}*{Corollary~4.8} we know that any representation of \( Q \) is a sum of finite-dimensional ones. Now the claim follows immediately, since \( \Set \) has coproducts.
\end{proof}

\subsubsection{Type $A$}

All the indecomposables are indicator representations. The following is therefore immediate from Corollary~\ref{cor.const_essimg}.

\begin{proposition}
\label{prop.An}
Let \( Q \) be a quiver of type \( A_n \), with any orientation. Then the functor
\[ \funct{free}_* \colon \rep( Q, \pset) \to \rep( Q, \vectF) \]
is essentially surjective.
\end{proposition}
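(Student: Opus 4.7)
The plan is to reduce to indecomposable representations via the classical representation theory of type $A_n$ quivers, and then apply the fact recorded in Corollary~\ref{cor.const_essimg} together with the cocontinuity of $\funct{free}_*$.

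First I would recall the classical result (Gabriel) that for any orientation of a quiver $Q$ of type $A_n$, every finite-dimensional representation decomposes as a finite direct sum of indecomposables, and each indecomposable is an indicator representation supported on some interval $[i,j] \subseteq \{1,\dots,n\}$. (The one subtlety is that ``interval'' here is taken with respect to the underlying linear order of vertices, independent of the orientation of the arrows; all the structure maps on such an interval are forced to be identities or zero.)

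Second, by Corollary~\ref{cor.const_essimg}, each such indicator representation lies in the essential image of $\funct{free}_* \colon \rep(Q, \pset) \to \rep(Q, \vect)$. To finish, I would observe that the essential image of $\funct{free}_*$ is closed under finite direct sums: since $\funct{free}_*$ is a left adjoint it preserves colimits, and in particular coproducts, and the category $\pset$ admits coproducts (given pointwise by wedge sums). Hence if $R_i \cong \funct{free}_*(S_i)$ for $i = 1, \dots, k$, then
\[ \bigoplus_{i=1}^k R_i \cong \bigoplus_{i=1}^k \funct{free}_*(S_i) \cong \funct{free}_*\Bigl(\bigvee_{i=1}^k S_i\Bigr), \]
which lies in the essential image as well.

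There is no real obstacle here: the only thing to watch is that we are claiming essential (not merely additive) surjectivity, which is why we need $\pset$ to have coproducts rather than merely relying on the additive structure on the target. Combining the two steps yields the proposition.
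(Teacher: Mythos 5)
Your proof is correct and follows essentially the same route as the paper: the paper's proof simply notes that all indecomposables in type $A_n$ are indicator representations and invokes Corollary~\ref{cor.const_essimg}. The extra details you supply (the interval description of the indecomposables and the closure of the essential image under direct sums via $\funct{free}_*$ preserving coproducts of pointed sets) are exactly the points the paper leaves implicit.
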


\subsubsection{Type $D$}

\begin{proposition} \label{prop:when_is_Dn_realizable}
For a quiver \( Q \) of type \( D_n \), the following are equivalent.
\begin{enumerate}
\item the functor \( \funct{free}^\mathrm{pt} \colon \rep( Q, \pset) \to \rep( Q, \vectF) \)
is essentially surjective;
\item the functor \( \funct{free} \colon \rep( Q, \set) \to \rep( Q, \vectF) \)
is additively surjective;
\item not every arm of the quiver contains an ``ingoing arrow'', that is \( Q \) is not of the form
\[ \begin{tikzcd}[sep=5mm]
\circ \ar[r,-,dotted,very thick] & \circ \ar[r] & \circ \ar[r,-,dotted,very thick] & \circ & \circ \ar[l] \\
&&& \circ \ar[u] 
\end{tikzcd} \]
where the arrows in the dotted part may have any orientation.
\end{enumerate}
\end{proposition}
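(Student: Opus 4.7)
The plan is to prove the cycle of implications $(1) \Rightarrow (2) \Rightarrow (3) \Rightarrow (1)$. The implication $(1) \Rightarrow (2)$ is immediate, since essential surjectivity of $\funct{free}_*$ trivially implies additive surjectivity, and by Proposition~\ref{prop.same_add_image} the additive images of $\funct{free}$ and $\funct{free}_*$ agree.

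For $(2) \Rightarrow (3)$ I argue by contraposition, using induction on $n$. Assume every arm of $Q$ contains an arrow oriented toward the trivalent vertex $v$. Since the two short arms are then oriented inward at $v$, the vertex $v$ fails to be a sink only when the long arm begins with an arrow pointing away from $v$. If $v$ is a sink, the full subquiver on $v$ together with its three neighbours is $D_4$ oriented all-inward, and the obstruction of Example~\ref{ex.D4_inward} transfers to $Q$ by Corollary~\ref{cor.subcategory}. Otherwise, let $w_j$ be the target of the innermost ingoing arrow on the long arm; then $w_j$ is a sink and the full subquiver on the short arms together with $v, w_1, \ldots, w_{j+1}$ is a $D_{j+4}$ again satisfying the bad-orientation hypothesis of~(3). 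If $j + 4 < n$, the inductive hypothesis combined with Corollary~\ref{cor.subcategory} supplies the required non-realizable indecomposable. If $j + 4 = n$, then $w_j = w_{n-4}$ is the unique sink of $Q$ and the indecomposable $R$ whose dimension vector is the highest root of $D_n$ satisfies $\dim \varinjlim R = \dim R(w_{n-4}) = 2$; Proposition~\ref{prop.colim_1d} then forbids its additive $\set$-realizability.

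For $(3) \Rightarrow (1)$, assume that some arm of $Q$ is oriented entirely outward from $v$. The indecomposable representations of $D_n$ are classified by positive roots; those with all entries at most one are indicator representations and thus lie in the essential image of $\funct{free}_*$ by Corollary~\ref{cor.const_essimg}. For the remaining positive roots, whose dimension vectors carry a contiguous block of twos starting at $v$, with ones extending further along the long arm and on each of the three leaves, I would construct an explicit pointed-set representation $S$ with $\funct{free}_*(S) \iso R$ as follows: each dimension-two vector space receives a common two-element fibre $\{x,y\}$ on top of the basepoint; injections between equal-dimensional spaces are encoded by the pointed map sending $x \mapsto x$ and $y \mapsto y$; rank-one surjections between equal-dimensional spaces are encoded by collapsing both $x$ and $y$ to a single element; and the outward-oriented arm is used to collapse $x$ or $y$ to the basepoint whenever a dimension must drop to one. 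The outward arm guarantees that the three one-dimensional subspaces of $V_v$ arising from the three arms can be independently prescribed to be pairwise distinct, which is precisely the indecomposability condition. The main obstacle will be this last direction: Theorem~\ref{thm.APR_preserves_induced} propagates realizability only forward across source reflections at vertices of out-degree at most two, and since the trivalent vertex cannot be reflected by this theorem, a purely reflection-theoretic reduction between orientations is unavailable. The pointed-set construction must instead be carried out uniformly over all the positive roots, and coordinating the fibres consistently across the three arms when the long arm is long will be the delicate point.
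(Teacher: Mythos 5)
Your implications (1)$\Rightarrow$(2) and (2)$\Rightarrow$(3) are correct. For the second one the paper simply writes down a single explicit indecomposable (zero beyond the innermost ingoing arrow of the long arm, \(F\) at its tail, \(F^2\) from its head through the trivalent vertex, \(F\) on the two short leaves) and applies Lemma~\ref{lem.terminal_is_1d} to its support; you instead induct, reducing either to the all-inward \(D_4\) of Example~\ref{ex.D4_inward} or to the highest root of a full subquiver with a unique sink via Proposition~\ref{prop.colim_1d}. Both arguments rest on the same colimit obstruction and yours is sound, but the induction is superfluous: your recursive call always lands immediately in the ``\(j+4=n\)'' case, so you could directly take the full subquiver spanned by the short arms, \(v\), and the long-arm vertices up to the source of the innermost ingoing arrow, and run the highest-root argument there together with Corollary~\ref{cor.subcategory}.

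The genuine gap is (3)$\Rightarrow$(1), which you yourself flag as unfinished. The paper's proof is that the standard matrix presentation of each indecomposable already has a multiplicative basis in the sense of Proposition~\ref{prop:mult-basis}; your pointed-set fibres \(\{x,y\}\) are the same idea, but the sketch never isolates where hypothesis (3) actually enters. The point is this: with fibre \(\{x,y\}\) over \(F^2_v\), a line of \(F^2_v\) that must arise as the \emph{image} of a pointed injection can only be \(\langle x\rangle\) or \(\langle y\rangle\), whereas the third line \(\langle x-y\rangle\) is available only as the \emph{kernel} of the collapse \(x,y\mapsto z\), i.e., along an arm whose relevant arrow points away from the centre. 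For a short arm that arrow is its unique arrow; for the long arm it is the arrow sitting at the \(2\)-to-\(1\) transition of the given dimension vector, and one must verify that a fully outward arm forces the corresponding contribution at \(v\) to be of kernel type. That verification is the entire content of (3)$\Rightarrow$(1) and is absent from your write-up. Conversely, the difficulty you anticipate --- coordinating fibres along a long arm --- is not there: consecutive two-dimensional spaces in an indecomposable are joined by isomorphisms, realized by the identity on \(\{x,y\}\) regardless of orientation (and ``rank-one surjections between equal-dimensional spaces'' never occur in an indecomposable Dynkin representation). As written, this direction is a plan rather than a proof.
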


\begin{proof}
(1) implies (2) by Proposition~\ref{prop.same_add_image}.

To show that (3) implies (1), we recall that any indecomposable representation of a quiver of type $D_n$ is either an indicator representation or can be chosen to have the following form \cite[Chapter 8]{schiffler2014quiver}:
\[
\begin{tikzcd}[sep=5mm]
0 \ar[r,-,dotted,very thick] & F \ar[r,-,dotted,very thick,"{\rm id}"] & F \ar[r,-,"A"] & 
F^2 \ar[r,-,dotted,very thick,"{\rm id}"] & F^2 & \ar[l,-,"B"] F \\
&&&& F \ar[u,-,"C"] 
\end{tikzcd}
\]
where the maps $A$, $B$ and $C$ are appropriately chosen projections, inclusions, and diagonal maps ($[1,1]$ or its transpose) and of which exactly one is a diagonal map. Moreover, these maps can be chosen such that the diagonal map appears at any of the three arms. Under the stated assumption, one can thus always choose the diagonal map on an outgoing arrow. In conclusion,  every indecomposable satisfies the condition in Proposition~\ref{prop:mult-basis}.

Finally, (2) implies (3) was shown in Example~\ref{example:D_n_not_realizable}.
\end{proof}

\subsubsection{Type $E$}

Computer experiments\footnote{\url{https://github.com/mbotnan/set-realizable/}}, using Corollary~\ref{cor.criterion_for_add_image} and the field \( F = \Z /2\Z \), gave the following complete list of orientations for which the free functor is additively surjective.
\begin{itemize}
\item[\(E_6\)] 
\begin{itemize}
\item One arm of length two is completely oriented outwards, or
\item the arm of length one is oriented outwards, and at least one additional arrow is oriented outwards.
\end{itemize}
\item[\(E_7\)]
\begin{itemize}
\item The arm of length three is completely oriented outwards, or
\item the arm of length two is completely oriented outwards, and at least one additional arrow is oriented outwards, or
\item the arm of length one is oriented outwards, and in at least one additional arm at least all but one arrow are oriented outwards.
\end{itemize}
\item[\(E_8\)]
\begin{itemize}
\item The arm of length four is completely oriented outwards, and at least one additional arrow is oriented outwards, or
\item the arm of length two is completely oriented outwards, and the arm of length four has at least two outward-oriented arrows, or
\item the arm of length one is oriented outwards, and at least two additional arrows are oriented outwards, at least one of which is on the arm of length two, or
\item the arm of length one is oriented outwards, and at least three arrows on the arm of length four are oriented outwards.
\end{itemize}
\end{itemize}
\subsection{Type \texorpdfstring{$\widetilde{A}$}{A-tilde}} \label{subsec:Atilde}

In this section we let $Q$ be a quiver of type $\widetilde{A}_n$,  \[ Q = \begin{tikzcd}[sep=5mm]
& \circ_1 \ar[rr,-,"\alpha_1"] && \circ_2 \ar[dd,-,dotted,very thick,bend left=90] \\
\circ_0 \ar[ru,-,"\alpha_0"] \ar[rd,-,swap,"\alpha_n"] \\
& \circ_n \ar[rr,-,swap,"\alpha_{n-1}"] && \circ_{n-1}
\end{tikzcd} \]
The indecomposable representations of $Q$ are either \emph{string} or \emph{band} representations. This is proven in \cite{Butler-Ringel}*{Theorem on page 161} more generally for string algebras, we recall the result here for our special case in the following theorem.

\begin{theorem} \label{thm:reps_of_Atilde}
Let $R \in \rep(Q, \vectF)$. Then $R$ is indecomposable if and only if $R$ is of one of the following two forms.  
\begin{itemize}
\item (String) $R\cong \funct{free}^\mathrm{pt} \circ S_\gamma$ for some $S_\gamma$ defined as follows. 

\begin{itemize}
\item 
Let $\gamma\colon \{1, 2, \ldots, m\}\to Q_0$ be an undirected path moving clockwise around $Q$. (Explicitly, \( \gamma \) is given by adding a constant and considering the result modulo \( n+1 \).) Let 
\[ S_\gamma(v) =  \gamma^{-1}(v) \cup \{ * \} \in \pset_* \]
which is a representation by saying that for $1\leq l<m$ we have $l \mapsto l+1$ or $l+1 \mapsto l$ (depending on the direction of the arrow $\alpha_{\gamma(l)}$, and interpreting the vertex and arrow indices cyclically).

\item
At the ends of the path we declare that if the arrow \( \alpha_{\gamma(1)-1} \) is oriented outward from \( \circ_{{\gamma(1)}}\) then \( S_\gamma(\alpha_{\gamma(1)-1})(1) = * \), and symmetrically if the arrow \( \alpha_{\gamma(m)} \) is oriented outward from \( \circ_{\alpha_{\gamma(m)}}\) then \( S_\gamma(\alpha_{\gamma(m)})(m) = * \).
\end{itemize}

\item (Band) \[R \cong \begin{tikzcd}[sep=5mm]
  & V \ar[rr,-,"{\rm id}"] && V \ar [dd,-,dotted,very thick,bend left=90] \\
  V \ar[ru,-,"{\rm id}"] \ar[rd,-,swap,"\phi"] \\
  & V \ar[rr,-,swap,"{\rm id}"] && V
  \end{tikzcd}, \]
where $\phi\colon V \to V$ is an isomorphism such that the following representation of the 1-loop quiver is indecomposable,
\[
\begin{tikzcd}
V \arrow[out=40,in=-40,loop,"\phi"]
\end{tikzcd}
\]
Equivalently, $V=F[X]/(p(X)^m)$ for a monic irreducible polynomial $p(X)$ and $\phi(q) = X\cdot q$. 
\end{itemize}
\end{theorem}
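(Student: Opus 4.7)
The plan is to invoke the Butler--Ringel classification of indecomposable modules over a string algebra, after verifying that the path algebra $FQ$ of $\tilde{A}_n$ fits into that framework. First I would check the two defining conditions for $FQ$ to be a string algebra: at every vertex of $\tilde{A}_n$ at most two arrows start and at most two arrows end (immediate, since every vertex has exactly two neighbours in the underlying cycle); and for every arrow $\alpha$ there is at most one arrow $\beta$ with $\alpha\beta \neq 0$, and at most one $\gamma$ with $\gamma\alpha \neq 0$ (immediate, as there are no relations). Thus the general machinery of \cite{Butler-Ringel} applies, and it remains to specialize the classification of strings and bands to this particular quiver.

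For the string part, I would observe that the admissible walks in $\tilde{A}_n$ are precisely the reduced undirected paths that do not wrap around the cycle, i.e.\ those that (after fixing a direction) run clockwise from some vertex $\gamma(1)$ to some vertex $\gamma(m)$. Each such walk $\gamma$ produces the representation $S_\gamma$ whose value at a vertex $v$ is the pointed set $\gamma^{-1}(v)\cup\{*\}$ and whose structure maps are the obvious ``successor or predecessor'' maps; this is exactly Proposition~\ref{prop:mult-basis} applied to an indicator-like pointed set. The boundary conditions on the endpoints in the statement record precisely the requirement that the walk be maximal: it cannot be extended to the left or right without leaving the valid strings, which translates into the arrows just past $\gamma(1)$ and $\gamma(m)$ pointing outward from the walk.

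For the band part, the key observation is that in $\tilde{A}_n$ the only reduced cyclic walk (up to cyclic permutation and inversion) is the full cycle $\alpha_0,\alpha_1,\ldots,\alpha_n$ traversed once. A band module for this cycle is then determined by a pair $(V,\phi)$ where $\phi$ is an automorphism of a finite-dimensional vector space $V$: choose a basis in which the structure maps of $n$ of the $n+1$ arrows are the identity, so that only one structure map, say along $\alpha_n$, carries non-trivial data, namely $\phi$. Indecomposability of the resulting representation is equivalent to indecomposability of $(V,\phi)$ as an $F[X,X^{-1}]$-module, and by the rational canonical form this forces $V = F[X]/(p(X)^m)$ with $p$ monic irreducible and $\phi$ equal to multiplication by $X$.

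The main obstacle is completeness: the assertion that every indecomposable is of one of these two types. This is precisely the substantive content of Butler--Ringel and is the step I would not reprove. The argument there proceeds by constructing, for any finite-dimensional indecomposable $R$, a distinguished basis compatible with the special biserial structure, and then combinatorially matching that basis to either a string (when the underlying ``word'' does not close up) or a band (when it does). In the $\tilde{A}_n$ case the uniqueness of the closed walk makes the band half of this dichotomy particularly transparent, but the existence of such a basis in the first place relies on the general reduction in \cite{Butler-Ringel}, which is what I would cite rather than reproduce.
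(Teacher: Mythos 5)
Your proposal matches the paper's treatment: the paper does not prove this theorem but simply cites the Butler--Ringel classification for string algebras and records its specialization to $\tilde A_n$, exactly as you propose. Your additional verifications (the string-algebra axioms for $FQ$, the identification of the unique reduced cyclic walk, and the reduction of bands to $F[X]/(p(X)^m)$ via the classification of automorphisms) are correct and only flesh out the specialization the paper leaves implicit.
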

\begin{remark}
The \emph{companion matrix} $C(q)$ of $q(X) = c_0 + c_1X + \ldots + c_{u-1}X^{u-1}+X^u$ is the $u\times u$ matrix 
\[ \begin{bmatrix} 0 & 0 & \cdots & 0 & -c_0 \\ 1 & 0 & \cdots & 0 & -c_1 \\ 0 & 1  & \cdots & 0 & -c_2 \\
\vdots & \vdots & \vdots & \vdots & \vdots\\
0 & 0  &\cdots & 0 & -c_{u-2}\\ 0 & 0  & \cdots & 1 & -c_{u-1} \\\end{bmatrix}.\]
The basis \( (1, X, X^2, \ldots, X^{m \operatorname{deg}(p) - 1}) \) of \( V \) gives \( C(p^m) \) as matrix representation for \( \phi \).

In the simplest case that $p(X) = X-\lambda$ for some $\lambda\in F$, the basis \( (1, X - \lambda, (X - \lambda)^2, \cdots, (X - \lambda)^{m-1} ) \) of \( V \) gives the $m\times m$ Jordan matrix $J_m(\lambda)$ as matrix representation for \( \phi \) (which is thus similar to $C(p^m)$). 
\end{remark}

By definition, the string representations are in the essential image of $\funct{free}^\mathrm{pt}_*$. We shall prove the following. 

\begin{theorem}\label{thm.Ainf}
Let \( Q \) be a quiver of type \( \tilde A \). An indecomposable \( R \in \rep( Q, \vectF) \) is in the additive image of the functor \( \funct{free}_* \colon \rep( Q, \set ) \to \rep( Q, \vectF ) \) if and only if
\begin{itemize}
\item $R$ is a string representation, or
\item $R$ is a band representation with $V\cong F[X]/(f(X)^m)$ where
\begin{enumerate}
\item $f(X)$ is the minimal polynomial of a root of unity,
\item the number $m$ satisfies
\[ m = \begin{cases} 1 & \text{if ${\rm char}(F) = 0$} \\ {\rm char}(F)^k & \text{if ${\rm char}(F) > 0$}\end{cases},\]
where $k$ is a non-negative integer and ${\rm char}(F)$ denotes the characteristic of $F$, 
\item the isomorphism $\phi\colon V\to V$ in the definition of a band representation is given by $\phi(q) = X\cdot q$.
\end{enumerate}
\end{itemize}
\end{theorem}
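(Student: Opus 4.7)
The plan is to handle the two types of indecomposables separately. The string representations by construction admit bases coming from the pointed sets \( S_\gamma \) on which every structure map sends a basis vector either to another basis vector or to the base point; hence by Proposition~\ref{prop:mult-basis} they lie in the essential image of \( \funct{free}_* \), which is contained in the additive image of \( \funct{free} \) by Proposition~\ref{prop.same_add_image}. For the band case I first treat the cyclic orientation \( \circ_0 \to \circ_1 \to \cdots \to \circ_n \to \circ_0 \), where a band representation is determined by \( (V,\phi) \) with \( V \) placed at every vertex, identity maps on all arrows, and \( \phi \) on one distinguished arrow; the general orientation then follows from Theorem~\ref{thm.APR_preserves_induced} by iterated reflection at sources, applicable since every vertex of \( \tilde A_n \) has at most two outgoing arrows. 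The key reduction is this: evaluation at vertex \( 0 \) together with ``composition around the cycle'' gives an additive functor from \( \rep(Q, \vect) \) to \( F[X] \)-modules, sending \( \funct{free}(S) \) to \( F[S_0] \) with \( X \) acting as the set endomorphism \( \sigma \colon S_0 \to S_0 \) obtained by composing the structure maps of \( S \) around the cycle. If \( R \) is a summand of \( \funct{free}(S) \), then \( (V,\phi) \cong F[X]/(f(X)^m) \) must be a summand of the \( F[X] \)-module \( F[S_0] \). The problem is thereby reduced to classifying indecomposable \( F[X] \)-module summands of \( F[S_0] \) for a finite set \( S_0 \) equipped with a set endomorphism \( \sigma \).

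For the forward direction, since \( S_0 \) is finite every element is eventually periodic under \( \sigma \), and \( F[S_0] \) decomposes as an \( F[X] \)-module into its ``periodic part'' (the span of elements lying on \( \sigma \)-cycles) and a complement on which \( X \) acts nilpotently; this splitting is the standard \( F[X] \)-module decomposition separating primary components at \( X \) from those coprime to \( X \). Since \( \phi \) is invertible on \( V = F[X]/(f^m) \) (note that \( f \neq X \) because \( 0 \) is not a root of unity), the summand \( (V,\phi) \) must lie in the periodic part, which is a direct sum over \( \sigma \)-cycles with a cycle of length \( \ell \) contributing \( F[X]/(X^\ell - 1) \). Writing \( \ell = p^r \ell' \) with \( p = \operatorname{char}(F) \) and \( \gcd(\ell', p) = 1 \) (or \( r = 0 \) when the characteristic is zero), the Frobenius identity \( X^\ell - 1 = (X^{\ell'} - 1)^{p^r} \) together with the separability of \( X^{\ell'} - 1 \) gives \( F[X]/(X^\ell - 1) \cong \bigoplus_i F[X]/(f_i^{p^r}) \), where the \( f_i \) are precisely the minimal polynomials of primitive \( d \)-th roots of unity for divisors \( d \) of \( \ell' \). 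Hence any indecomposable summand \( F[X]/(f^m) \) must have \( f \) the minimal polynomial of a root of unity and \( m \) a power of \( p \) (respectively \( m = 1 \) in characteristic zero), matching the restriction in the theorem.

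For the backward direction, given \( f \) the minimal polynomial of a primitive \( \ell' \)-th root of unity and \( m = p^k \) (or \( m = 1 \)), take \( S_0 \) to be a single cycle of length \( N = \ell' p^k \) with \( \sigma \) its cyclic shift. Then \( F[S_0] \cong F[X]/(X^N - 1) = F[X]/((X^{\ell'} - 1)^{p^k}) \cong \bigoplus_i F[X]/(f_i^{p^k}) \) contains \( F[X]/(f^{p^k}) \) as a summand. Placing \( S_0 \) at every vertex with identity maps on all arrows except one carrying \( \sigma \) produces a set representation whose image under \( \funct{free} \) decomposes as a direct sum of band representations, one of which is \( R \). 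The main technical obstacle is the forward direction's splitting analysis: justifying cleanly that the periodic part of \( (F[S_0], \sigma) \) is an \( F[X] \)-module direct summand and carefully tracking multiplicities in \( X^\ell - 1 \) to pin down the allowed values of \( m \). A secondary point is the orientation reduction: one must verify that reflection functors send band representations of \( \tilde A_n \) to band representations with the parametrizing pair \( (V, \phi) \) preserved up to isomorphism compatible with the characterization, so that additive realizability for the cyclic orientation transfers to arbitrary orientations of \( \tilde A_n \).
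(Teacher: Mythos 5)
Your treatment of the string case and your analysis of the one-loop situation (Fitting-type splitting of $F[S_0]$ into the span of the $\sigma$-cycles plus a nilpotent complement, cycles contributing $F[X]/(X^\ell-1)$, and the Chinese Remainder Theorem together with $X^{\ell}-1=(X^{\ell'}-1)^{p^r}$ and separability of $X^{\ell'}-1$) all match the paper's argument (Lemma~\ref{lem.Ainf-inj}, Proposition~\ref{prop.Ainf-set}, and the final computation in the proof of Theorem~\ref{thm.Ainf}). The genuine gap is in your reduction from an arbitrary orientation of $\tilde A_n$ to the cyclically oriented quiver. First, the cyclic orientation has no source at all --- every vertex has exactly one outgoing arrow --- so Theorem~\ref{thm.APR_preserves_induced} cannot even be applied to it; worse, a reflection at a source of $\tilde A_n$ reverses one clockwise and one counterclockwise arrow, hence preserves the number of clockwise-oriented arrows, so no sequence of source reflections connects the cyclic orientation to any acyclic one. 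Your monodromy functor (``composition around the cycle'') also only exists for the cyclic orientation, so the forward direction of your argument is not available for the orientations the theorem is actually mostly about. Second, even between two acyclic orientations that \emph{are} related by source reflections, Theorem~\ref{thm.APR_preserves_induced} is one-directional: it says realizability is preserved under $\mu_d$, not reflected, so it can only show that the realizable bands for $\mu_d(Q)$ \emph{contain} the transported list, which does not suffice for an if-and-only-if characterization.

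The paper closes this gap with Lemma~\ref{lem.bandrep}, which works uniformly in the orientation: for a band representation all structure maps except one are identities, and $\funct{free}\circ\funct{forget}$ of an identity is again an identity, so the commutativity constraints on a splitting $i\colon R\to\funct{free}(\funct{forget}(R))$ of the counit force all components $i_v$ to coincide; the splitting problem therefore collapses to the splitting problem for $(V,\phi)$ over the one-loop quiver, regardless of how the arrows of $\tilde A_n$ are oriented. If you replace your reflection-functor reduction by this counit argument (or prove an analogous orientation-independent reduction yourself), the rest of your proof goes through; your explicit construction for the backward direction already works for any orientation. As a smaller point, you should also make explicit that it suffices to consider finite set representations (Proposition~\ref{prop.repinfsets}) before invoking eventual periodicity of $\sigma$ on $S_0$.
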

This theorem is surprising and illustrates the intrinsic complexity of characterizing the additively $\Set$-realizable representations. Consider for instance the following example.
\begin{example}
Let $F=\Z/2\Z$, $V=(\Z/2\Z)^m$, and
\[\phi = J_m(1) = 
\begin{bmatrix}
 1 & 1 & 0& \cdots & 0 & 0 \\
 0 & 1 & 1 &\cdots & 0 & 0 \\
\vdots & \vdots & \vdots & \ddots & \vdots & \vdots \\
  0 & 0 & 0 & \cdots & 1 &  1\\
  0 & 0 & 0 & \cdots & 0 & 1
  \end{bmatrix}.\]
Then the corresponding band representation is additively $\Set$-realizable if and only if $m = 2^k$ for some integer $k\geq 0$.
\end{example}

By virtue of the following lemma we see that it suffices to consider automorphisms of vector spaces. 
\begin{lemma}
\label{lem.bandrep}
Let $R$ be a band representation, and let $\phi\colon V\to V$ be the isomorphism as in the definition of a band representation. Then $R$ is additively $\Set$-realizable if and only if the following representation $R'$ of the one-loop quiver is additively $\Set$-realizable, 
 \[\begin{tikzcd}
V \arrow[out=40,in=-40,loop,"\phi"]
\end{tikzcd}.\]
\end{lemma}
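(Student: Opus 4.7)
My plan is to exploit that band representations of $\tilde{A}_n$ arise via a natural pullback functor from the one-loop quiver. Let $L$ denote the free category on the one-loop quiver (one object $*$, loop $\ell$), and define $\pi\colon \tilde{A}_n \to L$ by sending every vertex to $*$, the exceptional arrow (the one carrying $\phi$) to $\ell$, and every other arrow to $\mathrm{id}_*$. Pullback along $\pi$ yields a functor $\pi^*\colon \rep(L, \mathcal{C}) \to \rep(\tilde{A}_n, \mathcal{C})$ for any category $\mathcal{C}$, and by construction $\pi^*(V, \phi) = R$. The strategy is to lift this equivalence of data to an equivalence of additive $\Set$-realizability via standard adjoint calculus.

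The first step is to observe that $\pi^*$ is fully faithful. A morphism $\pi^*(V_1, \phi_1) \to \pi^*(V_2, \phi_2)$ is a family $(f_v)_v$ of linear maps $V_1 \to V_2$ satisfying naturality at each arrow of $\tilde{A}_n$. Naturality at a non-exceptional arrow, which pulls back to the identity on both sides regardless of orientation, forces the maps at its two endpoints to agree; since the non-exceptional arrows connect all vertices, every $f_v$ equals a common map $f$. Naturality at the exceptional arrow then reduces to $f\phi_1 = \phi_2 f$, precisely the condition to be a morphism $(V_1, \phi_1) \to (V_2, \phi_2)$ in $\rep(L, \vect)$.

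The second step is to note that $\pi^*$ admits a left adjoint $\pi_!$ (left Kan extension along $\pi$), and that both $\pi^*$ and $\pi_!$ commute with $\funct{free}$. For $\pi^*$ this is immediate from its definition as precomposition. For $\pi_!$ it follows by uniqueness of left adjoints applied to the trivial identity $\funct{forget} \circ \pi^* = \pi^* \circ \funct{forget}$, giving $\pi_! \circ \funct{free} = \funct{free} \circ \pi_!$. Since $\pi^*$ is fully faithful, the counit $\pi_!\pi^* \to \mathrm{id}$ is a natural isomorphism; in particular $\pi_!(R) = \pi_!\pi^*(V, \phi) \cong (V, \phi)$.

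With these pieces in place, both directions follow quickly. If $(V, \phi) \oplus (V', \phi') \cong \funct{free}(T, \sigma)$ in $\rep(L, \vect)$, applying $\pi^*$ gives $R \oplus \pi^*(V', \phi') \cong \pi^*(\funct{free}(T, \sigma)) = \funct{free}(\pi^*(T, \sigma))$, so $R$ is additively $\Set$-realizable. Conversely, if $R \oplus R' \cong \funct{free}(S)$ for some $S \in \rep(\tilde{A}_n, \set)$, applying $\pi_!$ gives $(V, \phi) \oplus \pi_!(R') \cong \funct{free}(\pi_!(S))$, so $(V, \phi)$ is additively $\Set$-realizable. The only subtlety I anticipate is handling the arbitrary orientations of the arrows of $\tilde{A}_n$ when defining $\pi$, but since $\pi$ collapses all vertices to a single point, orientation plays no role in the functor itself; it only determines in which morphism-direction $\phi$ appears, which is precisely what the pullback records.
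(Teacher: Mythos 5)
Your proof is correct, but it takes a genuinely different route from the paper's. The paper invokes its counit criterion (Corollary~\ref{cor.criterion_for_add_image}): $R$ is additively $\Set$-realizable iff the counit $\funct{free}(\funct{forget}(R)) \to R$ splits, and then argues directly that any section $i$ must have equal components $i(v)$ at all vertices, because the non-exceptional structure maps are identities and the quiver wraps around; this reduces the splitting problem for $R$ to the splitting problem for $(V,\phi)$ over the one-loop quiver. You instead package the same underlying observation --- that the cycle with all-but-one identity maps behaves exactly like a single loop --- into the collapse functor $\pi \colon \tilde A_n \to L$, and transfer realizability along $\pi^*$ and its left Kan extension $\pi_!$, in precise analogy with the paper's own Proposition~\ref{prop.Lan_is_realizable} for full subcategory inclusions (there the unit of the adjunction is invertible because $\funct{Lan}$ is fully faithful; here the counit is invertible because $\pi^*$ is). All the ingredients check out: $\pi$ is well defined on the free category regardless of arrow orientations, $\pi^*(V,\phi)$ is literally the band representation, full faithfulness of $\pi^*$ follows from connectedness of the cycle minus the exceptional edge, the commutation $\pi_! \circ \funct{free} \cong \funct{free}\circ \pi_!$ is the same uniqueness-of-adjoints argument as in Lemma~\ref{lem:kan-commutes}, and both $\pi^*$ and $\pi_!$ preserve direct sums. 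What your version buys is reusability: it isolates the general principle that additive $\Set$-realizability transfers both ways along any functor $\pi$ between small categories for which $\pi^*$ is fully faithful, whereas the paper's argument is a hands-on computation specific to this quiver.
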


\begin{proof}
A quiver of type \( \widetilde{A} \) can be turned into a one-loop quiver $L$ by iterated edge contractions (see Definition~\ref{def:edge_contraction_quiver}). Denoting the composition of edge contractions by $\funct{c}\colon \widetilde{A}\to L$, the functor $\rep(L, \vectF) \to \rep (\widetilde{A},\vectF)$ given by pre-composition by $\funct{c}$ acts by adding copies of $V$ and identity maps. In particular, the band representation $R$ as depicted in Theorem~\ref{thm:reps_of_Atilde} is of the form $R'\circ \funct{c}$. The claim follows from Theorem~\ref{thm.edgecontraction}.
\end{proof}

%

Let $L$ denote the one-loop quiver. First we observe that a representation $(S,a)\in \rep(L,\set)$, consisting of a single set $S$ and a map $a \colon S \to S$, corresponds to a quiver where each vertex is an element of $S$ and has out-degree one (sometimes called a \emph{functional graph}), as illustrated by the following diagram:

\[\begin{tikzcd}[column sep=5mm]
\bullet\ar[d] & \bullet\ar[d] &  & \bullet\ar[d]&  & \bullet\ar[d]& \bullet\ar[dl]\\
\bullet\ar[loop below] & \bullet\ar[d] & \bullet\ar[dl] & \bullet\ar[dr] & \bullet\ar[d] & \bullet \ar[dl]&\\
 & \bullet\ar[rrr, bend left] &  &  & \bullet\ar[rr, bend left] & \bullet\ar[llll, bend left] & \bullet \ar[l, bend left]
\end{tikzcd}
\]

In particular, every element $i\in S$ ultimately ends up in a cycle. Let $C$ denote the set of elements $i\in S$ for which there exists a $k>1$ such that $a^k(i) = i$.
\begin{lemma}\label{lem.Ainf-inj}
Let $(S,a)\in \rep(L, \set)$. Then, 
\[\funct{free}_*(S,a) \cong \funct{free}_*(C, a|_C) \oplus (R, \psi)\]
for some $(R, \psi)\in \rep(L, \vectF)$ where $\psi$ is nilpotent. 
\end{lemma}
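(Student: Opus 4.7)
My plan is to apply Fitting's decomposition lemma to the endomorphism $\funct{free}(a)$ of the vector space $V = \funct{free}(S)$. Finiteness of $S$ (and hence of $\dim V$) is not a restriction, since Proposition~\ref{prop.enoughToConsiderFinite} reduces the problem to the case of finite $S$. Fitting's lemma then produces a unique $\funct{free}(a)$-invariant direct sum decomposition $V = V_1 \oplus V_2$ with $\funct{free}(a)$ acting as an automorphism on $V_1$ and nilpotently on $V_2$; explicitly, $V_1 = \bigcap_{n \geq 0} \funct{free}(a)^n(V)$ and $V_2 = \bigcup_{n \geq 0} \ker \funct{free}(a)^n$. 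Setting $(R, \psi) = (V_2, \funct{free}(a)|_{V_2})$, it only remains to identify $V_1$ with $\funct{free}(C, a|_C)$ as a representation of $L$.

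For the identification I would proceed in two steps. First, $a$ preserves $C$ and restricts there to a bijection, since every element of $C$ lies on a finite cycle on which $a$ acts by cyclic permutation. Consequently $\funct{free}(C) \subseteq V$ is $\funct{free}(a)$-invariant and $\funct{free}(a)$ restricts to an automorphism of this subspace, so by the maximality of $V_1$ among subspaces with these two properties we obtain $\funct{free}(C) \subseteq V_1$. For the reverse inclusion, finiteness of $S$ implies that every $a$-orbit is eventually periodic, so there exists an integer $N$ with $a^N(s) \in C$ for every $s \in S$; this forces $\funct{free}(a)^N(V) \subseteq \funct{free}(C)$ and therefore $V_1 \subseteq \funct{free}(C)$.

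Combining the two inclusions gives $V_1 = \funct{free}(C)$ with $\funct{free}(a)|_{V_1} = \funct{free}(a|_C)$, which is the desired identification of $L$-representations. The whole argument is quite direct; the only mildly non-trivial point is the identification step, which rests on the combinatorial fact that the cycles in the functional graph of $(S,a)$ are exactly the locus on which $\funct{free}(a)$ acts invertibly in the long run.
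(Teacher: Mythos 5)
Your proof is correct, but it takes a genuinely different route from the paper's. You work entirely at the linear level: you apply Fitting's lemma to the endomorphism $\funct{free}(a)$ of the finite-dimensional space $V=\funct{free}(S)$ and then identify the invertible part $\bigcap_{n}\funct{free}(a)^n(V)$ with $\funct{free}(C)$. Both inclusions in that identification are justified correctly: any invariant subspace $W$ on which $\funct{free}(a)$ restricts to an automorphism satisfies $W=\funct{free}(a)^n(W)\subseteq\funct{free}(a)^n(V)$ for all $n$, whence $\funct{free}(C)\subseteq V_1$; and eventual periodicity of every $a$-orbit in the finite set $S$ gives $\funct{free}(a)^N(V)\subseteq\funct{free}(C)$ for large $N$, whence $V_1\subseteq\funct{free}(C)$. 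The paper instead builds the splitting one level up, in $\rep(L,\set)$: it exhibits an explicit retraction $f\colon(S,a)\to(C,a|_C)$, sending $s$ to $\operatorname{pre}^m(a^m(s))$ for $m$ minimal with $a^m(s)\in C$, and then uses that $\funct{free}$ preserves split monomorphisms, so the sequence $0\to\funct{free}(C,a|_C)\to\funct{free}(S,a)\to(\funct{free}(S)/\funct{free}(C),\psi)\to 0$ splits with $\psi$ nilpotent. The paper's version yields the slightly stronger fact that the splitting is already realized by a morphism of $\set$-representations, while yours is quicker to justify from standard linear algebra; both ultimately rest on the same combinatorial fact that every element of a finite functional graph eventually enters a cycle. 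One cosmetic remark: since $\set$ denotes \emph{finite} sets in this paper, finiteness of $S$ is part of the hypothesis, so your appeal to Proposition~\ref{prop.enoughToConsiderFinite} is unnecessary (and that proposition concerns summands of $\funct{free}(S)$ rather than the representation $\funct{free}(S,a)$ itself, so it would not directly deliver the reduction in the form you invoke it).
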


\begin{proof}
We construct a one-sided inverse to the inclusion \( (C, a|_C) \to (S, a) \).

For $c\in C$, let $\operatorname{pre}(c)$ denote the unique $c'\in C$ such that $a(c') = c$. For \( s \in S \), find the minimal number \( m \) such that \( a^m(s) \in C \), and set \( f(s) = \operatorname{pre}^m(a^m(s)) \in C \). Then \( f \) is a morphism \( (S, a) \) to \( (C, a|_C) \), and is left inverse to the inclusion \( (C, a|_C) \to (S, a) \).

Having a left inverse is preserved by any functor, in particular by \( \funct{free}_* \), and thus the short exact sequence
\[ 0 \to \funct{free}_*(C, a|_C) \to \funct{free}_*(S, a) \to (\funct{free}_*(S)/\funct{free}_*(C), \psi) \to 0 \]
splits. Here \( \psi \) is the map induced by \( a \) on the quotient \( \funct{free}_*(S)/\funct{free}_*(C) \). Since \( a \) eventually maps every element of \( S \) to \( C \) it follows that \( \psi \) is nilpotent.
\end{proof}

Since the morphism $\phi\colon V\to V$ in a band representation is an isomorphism, the additively $\Set$-realizable band representations correspond precisely to summands of representations of the form $\funct{free}_*(C,a|_C)$ for $C$ a disjoint union of cycles.

\begin{proposition}\label{prop.Ainf-set}
Let $(S,a)\in \rep(L, \set)$ where $(S,a)$ is a disjoint union of cycles. Then, $\funct{free}_*(S,a)$ decomposes into a direct sum of representations of the form
\[  (F[X] / (X^n-1), ``\cdot X"). \] 
\end{proposition}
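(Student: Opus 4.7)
The proof will be a short, direct computation relying on the fact that $\funct{free}$ is a left adjoint and thus preserves coproducts.

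First I would observe that coproducts in $\rep(L, \set)$ are computed as disjoint unions (with the induced self-map on each summand), while coproducts in $\rep(L, \vect)$ are direct sums. Since $\funct{free} \colon \rep(L, \set) \to \rep(L, \vect)$ is a left adjoint to the forgetful functor, it preserves all colimits, and in particular coproducts. Writing the disjoint union of cycles as $(S,a) = \bigsqcup_{i \in I} (C_i, a|_{C_i})$, we therefore obtain
\[ \funct{free}(S, a) \iso \bigoplus_{i \in I} \funct{free}(C_i, a|_{C_i}), \]
so it suffices to identify $\funct{free}$ of a single cycle with the stated representation.

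Next I would verify that, for a cycle $(C, a|_C)$ of length $n$ with elements labelled $c_0, c_1, \ldots, c_{n-1}$ and $a|_C(c_k) = c_{k+1 \bmod n}$, there is an isomorphism
\[ \funct{free}(C, a|_C) \iso (F[X]/(X^n - 1), \, ``\cdot X"). \]
The $F$-linear map sending $c_k \mapsto X^k$ for $0 \le k \le n-1$ is clearly a vector-space isomorphism, since both sides have dimension $n$ with bases $\{c_0, \ldots, c_{n-1}\}$ and $\{1, X, \ldots, X^{n-1}\}$. To see that it intertwines the self-maps, note that $c_k \mapsto c_{k+1 \bmod n}$ corresponds on the right-hand side to $X^k \mapsto X^{k+1}$ for $k < n-1$ and to $X^{n-1} \mapsto X^n = 1$, which is exactly the action of multiplication by $X$ modulo $X^n - 1$.

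Combining these two steps gives the claimed decomposition. There is no real obstacle here: once coproduct-preservation of $\funct{free}$ is invoked, the statement reduces to the well-known identification of the cyclic permutation representation with $F[X]/(X^n-1)$ endowed with multiplication by $X$.
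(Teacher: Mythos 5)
Your proof is correct and follows essentially the same route as the paper: decompose the disjoint union of cycles (the paper implicitly uses that $\funct{free}$ preserves coproducts, which you make explicit) and then identify $\funct{free}$ of a single $n$-cycle with $(F[X]/(X^n-1), ``\cdot X")$ via $c_k \mapsto X^k$, a step the paper dismisses as immediate and you verify in detail.
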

\begin{proof}Up to isomorphism, $(S,a)$ can be written as a disjoint union of representations of the form \( ( \Z /n\Z , ``+1") \). It is immediate that
\[\funct{free}_*(\mathbb Z /n\Z, ``+1") \iso (F[X] / (X^n-1), ``\cdot X"). \qedhere \]
\end{proof}

We are now ready to complete the proof of Theorem~\ref{thm.Ainf}.
\begin{proof}[Proof of Theorem~\ref{thm.Ainf}]
From the definition of band representations, Lemma~\ref{lem.bandrep}, Lemma~\ref{lem.Ainf-inj}, and Proposition~\ref{prop.Ainf-set}, it suffices to describe the indecomposable summands of
\[(F[X] / (X^n-1), ``\cdot X").\]
If the characteristic \( p \) of \( F \) is positive, we factor \( n = p^{\alpha} m \) with \( p \) not dividing \( m \). Note that \( X^n - 1 = (X^m - 1)^{p^{\alpha}} \), and that \( X^m - 1 \) is separable, i.e., does not have any double factors in its factor decomposition.

On the other hand, if the characteristic of \( F \) is zero then \( X^n - 1 \)  is separable. To make the notation consistent with the previous notation, we set \( p = 1 \) and \( m = n \) in this case.

Independent of the characteristic it now follows from the Chinese Remainder Theorem that
\[ F[X] / (X^n-1) \cong \bigoplus_{f(X) \text{ irreducible factor of } X^m-1} F[X]/(f(X)^{p^{\alpha}}). \]
To conclude the proof, we note that irreducible factors of polynomials of the form \( X^m - 1 \) are precisely minimal polynomials of roots of unity. 
\end{proof}

\subsection{Type \texorpdfstring{$\tilde D_4$}{D\_4-tilde}}
\label{subsec.d4}
If $Q$ is a quiver of type $\tilde D_4$ with at most one outgoing arrow, then $Q$ is a tree quiver and by Corollary~\ref{cor.treequiver} there are only finitely many indecomposables in the additive image of $\funct{free}_*$. For two ingoing and two outgoing arrows we observe the following.

\begin{proposition} \label{prop.D4tilde_via_A3tilde}
Let $Q$ be a quiver of type $\tilde D_4$ with exactly two outgoing arrows. Then, there are infinitely many indecomposables (not) contained in the additive image of $\funct{free}_*$. 
\end{proposition}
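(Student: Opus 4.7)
The strategy is to identify bipartite $\tilde A_3$ as a full subcategory of (the free category on) $Q$, and then transfer the infinite families of band representations from Theorem~\ref{thm.Ainf} via Proposition~\ref{prop.Lan_is_realizable}.

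Label the central vertex of $Q$ as $c$, the two source vertices (with arrows $s_i \to c$) as $s_1, s_2$, and the two sink vertices (with arrows $c \to t_j$) as $t_1, t_2$. Let $S$ be the full subcategory of the free category on $Q$ with object set $\{s_1, s_2, t_1, t_2\}$. Any morphism between two outer vertices in the free category on $Q$ must factor through $c$, so the only non-identity morphisms in $S$ are the four compositions $s_i \to t_j$. Thus $S$ is isomorphic to the free category on $\tilde A_3$ with bipartite orientation, and $\rep(S, \vect)$ is canonically identified with the representation category of bipartite $\tilde A_3$.

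By Proposition~\ref{prop.Lan_is_realizable}, a representation $R \in \rep(S, \vect)$ is additively $\Set$-realizable if and only if $\funct{Lan} R \in \rep(Q, \vect)$ is. Since the unit $\operatorname{id} \to \funct{res} \circ \funct{Lan}$ is invertible (the inclusion is fully faithful), we have $\End(\funct{Lan} R) \cong \End(R)$ for every $R$; hence $\funct{Lan}$ preserves indecomposability and reflects isomorphism. It therefore suffices to exhibit, for bipartite $\tilde A_3$, infinite families of indecomposable representations both in and outside the additive image of $\funct{free}_*$, and then transport them by $\funct{Lan}$.

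Both families are supplied by the classification of band representations in Theorem~\ref{thm.Ainf}. For additively $\Set$-realizable indecomposables: in positive characteristic $p$ take band representations with $V_k = F[X]/(X-1)^{p^k}$ for $k \geq 0$; in characteristic zero take $V_n = F[X]/f_n(X)$ where $f_n$ is the minimal polynomial over $F$ of a primitive $n$-th root of unity (giving infinitely many distinct $f_n$ as $n$ varies, since each irreducible polynomial has only finitely many roots). For indecomposables outside the additive image: take band representations with $V_m = F[X]/(X-1)^m$ for $m \geq 2$ in characteristic zero, or for $m$ not a power of $p$ in positive characteristic. Applying $\funct{Lan}$ yields the required infinite families of indecomposable representations of $Q$. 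The main point to check, beyond the routine application of Proposition~\ref{prop.Lan_is_realizable}, is the identification of $S$ with bipartite $\tilde A_3$; this is immediate from the observation that morphisms between outer vertices in the free category on $Q$ are exactly the compositions through $c$, so no serious obstacle arises.
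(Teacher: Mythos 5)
Your proof is correct and follows essentially the same route as the paper: both identify bipartite $\tilde A_3$ as the full subcategory of $Q$ on the four outer vertices, transfer band representations along the left Kan extension via Proposition~\ref{prop.Lan_is_realizable}, and invoke Theorem~\ref{thm.Ainf} to produce infinitely many realizable and infinitely many non-realizable indecomposables. The only difference is presentational --- the paper writes the Kan extension out explicitly in matrix form (with $A$ a companion matrix), whereas you argue abstractly via full faithfulness of $\funct{Lan}$ and name the families $F[X]/(f^m)$ directly.
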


\begin{proof}
Consider the following two indecomposable representations, where $R'$ is obtained from $R$ by leaving out the center vertex: \[
R=\begin{tikzcd}[sep=12mm, ampersand replacement = \&]
\& F^m\&  \\
F^m \ar[r, "{\left[ \begin{smallmatrix} I_m \\ 0 \end{smallmatrix} \right]}"] \& F^{2m}\ar[u,swap, "{\left[ \begin{smallmatrix} I_m & A \end{smallmatrix} \right]}"] \ar[r, "{\left[ \begin{smallmatrix} I_m & I_m \end{smallmatrix} \right]}"] \& F^m \\
\& F^m \ar[u, "{\left[ \begin{smallmatrix} 0 \\ I_m \end{smallmatrix} \right]}"]  \& \end{tikzcd} 
\qquad R' =
\begin{tikzcd}[sep=12mm, ampersand replacement = \&]
\& F^m\&  \\
F^m\ar[ur, "I_m"]\ar[rr, near start, "I_m"]  \& \& F^m \\
\& F^m \ar[ur, "I_m"] \ar[uu, near start, "A"]  \& \end{tikzcd},
\]
and $A$ is the companion matrix of $p^s$, where $p(X)\neq X, X-1$ is a monic and irreducible polynomial, and $s\geq 1$ is an integer. Note that $R$ is additively $\Set$-realizable if and only if $R'$ is additively $\Set$-realizable. The forward implication is obvious. The converse follows from Proposition~\ref{prop.Lan_is_realizable} and the observation that $R$ is the left Kan extension of $R'$ along the inclusion appending the central vertex. The result follows from Theorem~\ref{thm.Ainf} as $R'$ is a band representation of $\tilde{A}_3$.
\end{proof}

Before we look more closely at which exact representations are in the additive image of \( \funct{free}_* \) we need a brief review of the representation theory of \( \tilde{D}_4 \), in particular aspects of Auslander--Reiten theory.

As for any hereditary algebra, there are preprojective and preinjective representations whose dimension vectors can easily combinatorially be computed (see \cite{AR}). Moreover, indecomposable preprojective and preinjective representations are uniquely determined by their dimension vectors \cite{AR}*{VIII, Corollary~2.3}.

In addition, there are regular representations. These come in a family of tubes, that is Auslander-Reiten components on which the Auslander-Reiten translation has a finite order. This finite order is called the rank of the tube. For type $\widetilde{D}_4$, of these tubes, 3 have rank 2 while the others have rank 1. See \cite{SS2}*{XIII.2} and in particular Table~2.6 therein. (Note that that book uses algebraically closed fields throughout, but the result holds for arbitrary fields. In the proof of our main result here, Theorem~\ref{thm:D4_three_outgoing}, we will however reduce to the case of an algebraically closed field to be able to cite the source.)

\begin{remark} \label{rem:T0ogTinfty}
Let $Q$ be a quiver of type $\tilde D_4$ with exactly two outgoing arrows. The same argument as in the proof of Proposition~\ref{prop.D4tilde_via_A3tilde} also shows that all representations in the two tubes of rank \( 2 \) denoted \( \mathcal{T}_0^{\Delta(\tilde D_4)} \) and \( \mathcal{T}_\infty^{\Delta(\tilde D_4)} \) in \cite{RepresentationTheoryOfArtinAlgebrasII}*{XIII.2, Table~2.6} are additively \( \Set \)-realizable: They are obtained as left Kan-extensions of string representations of \( \tilde A_3 \).
\end{remark}

\begin{proposition} \label{prop.preproj_and_preinj_are_induced}
Let $Q$ be a quiver of type $\tilde D_4$ with exactly two outgoing arrows. Then all preprojective and preinjective representations are in the essential image of \( \funct{free}_* \).
\end{proposition}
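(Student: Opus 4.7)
The starting observation is that every projective and every injective representation of $Q$ is an indicator representation. Since $Q$ is of type $\tilde{D}_4$ with all arrows incident to the central vertex $c$, every oriented path in $Q$ has length at most two, so each $P(v)$ and each $I(v)$ has dimension at most one at every vertex; a direct inspection shows all structure maps are identities or zero in each case. By Corollary~\ref{cor.const_essimg}, each projective and each injective lies in the essential image of $\funct{free}_*$.

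For the remaining preprojective modules $\tau^{-k}P(v)$ with $k \geq 1$, and dually for the preinjective modules, my plan is to combine two compatible strategies. First, I would exploit the reflection functors of Theorem~\ref{thm.APR_preserves_induced}. The sources of $Q$ are the two leaves $b_1$ and $b_2$, each with a single outgoing arrow, so both $\mu_{b_1}$ and $\mu_{b_2}$ are legal reflections and preserve the essential image of $\funct{free}_*$. Second, I would use the left Kan extension from the full subcategory $S \subseteq Q$ on the four leaves: this $S$ is a poset of type $\tilde{A}_3$ with two sources $b_1, b_2$ and two sinks $a_1, a_2$, and by Proposition~\ref{prop.Lan_is_realizable} realizability is transported along $\funct{Lan}$. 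Preprojective and preinjective representations of $\tilde{A}_3$ are string modules, which by construction admit multiplicative bases and hence, by Proposition~\ref{prop:mult-basis}, lie in the essential image of $\funct{free}_*$. A preprojective of $Q$ arises as such a Kan extension precisely when its dimension at $c$ equals the sum of its dimensions at $b_1$ and $b_2$.

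The key technical obstacle is to verify that the combination of legal reflections and Kan extensions reaches every preprojective and preinjective indecomposable of $Q$. A naive iteration of the Coxeter functor would require a reflection at $c$ in the intermediate quiver $\mu_{b_2}\mu_{b_1}(Q)$, in which $c$ becomes a source with four outgoing arrows, violating the hypothesis of Theorem~\ref{thm.APR_preserves_induced}. I expect the Kan extension strategy to cover all preprojectives of $Q$ whose dimension at $c$ equals $\dim R(b_1) + \dim R(b_2)$, while a direct case analysis handles the finitely many preprojective (and dually preinjective) indecomposables whose dimension at $c$ is smaller --- these occur at the low end of the Auslander-Reiten component, and I expect them all to be indicator representations or summands of small manifestly $\funct{free}_*$-realizable representations accessible via the legal reflections at $b_1$ and $b_2$. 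A careful dimension-vector bookkeeping along the preprojective and preinjective components of the Auslander-Reiten quiver will be the heaviest part of the argument.
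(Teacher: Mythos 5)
Your opening observation (projectives and injectives are indicator representations) is correct, and the Kan--extension idea does capture something real --- it is exactly the mechanism behind Proposition~\ref{prop.D4tilde_via_A3tilde} --- but the reduction you propose does not close. The set of preprojectives (and preinjectives) that are \emph{not} left Kan extensions from the full subcategory on the four leaves is infinite, not finite. A representation of \( Q \) is such a Kan extension precisely when the canonical map \( R(b_1)\oplus R(b_2)\to R(c) \) is an isomorphism, and for every \( n \) there are preprojective indecomposables with dimension vector such as \( (n,n;2n+1;n,n+1) \) (sources; centre; sinks), i.e.\ with \( \dim R(c)=\dim R(b_1)+\dim R(b_2)+1 \); these cannot be Kan extensions from the leaves, they have arbitrarily large dimension, and they are certainly not all indicator representations. (Compare the second, fourth and fifth families in Figure~\ref{table.preproj_D4tilde} and several families in Figure~\ref{table.preinj_D4tilde}: roughly half of each component escapes your strategy.) So the ``direct case analysis of finitely many exceptions'' you defer to is in fact the bulk of the statement.

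The reflection functors also point the wrong way for your purposes: Theorem~\ref{thm.APR_preserves_induced} transfers realizability \emph{from} \( \rep(Q,\vect) \) \emph{to} \( \rep(\mu_{b_j}(Q),\vect) \), and \( \mu_{b_j}(Q) \) has three outgoing arrows --- this produces no new realizable representations of \( Q \) itself. The only admissible reflection landing \emph{on} \( Q \) starts from the orientation with a single outgoing arrow, which by Corollary~\ref{cor.treequiver} has only finitely many realizable indecomposables, so nothing like a full preprojective component can be imported that way. The paper's proof is of a different nature: it exhibits explicit pointed-set representations (Figures~\ref{table.preproj_D4tilde} and~\ref{table.preinj_D4tilde}), one infinite family per slice of the preprojective and preinjective components, and verifies that their linearizations are indecomposable with the correct dimension vectors. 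Your argument legitimately recovers the families with \( R(c)\cong R(b_1)\oplus R(b_2) \), but for the remaining infinite families an explicit construction of this kind (or a genuinely new idea) is unavoidable.
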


\begin{proof}
This is done by explicit classification. The pointed set realizations for the preprojectives and preinjectives can be found in Figures~\ref{table.preproj_D4tilde} and \ref{table.preinj_D4tilde}, respectively. Note that it is easy (but tiresome) to check that these in fact do realize all preprojective (resp.\ preinjective) representations: The preprojectives (and preinjectives) are the unique indecomposables with the correct dimension vector. Thus it suffices to check that \( \funct{free}_* \) of the representations in the list all have endomorphism ring \( F \) (this is the bad part), and that they have the correct dimension vectors.
\end{proof}

\begin{example}
\label{ex.D4.preproj.indec}
Let us illustrate the type of calculations it takes to show that the endomorphism ring of \( \funct{free}_* \) of the representations in Figures~\ref{table.preproj_D4tilde} and \ref{table.preinj_D4tilde} have endomorphism ring \( F \). To that end, we look at the first representation in Figure~\ref{table.preproj_D4tilde}. Linearizing it, we obtain the vector space representation
\[ \begin{tikzcd}[ampersand replacement = \&]
F^n \ar[rd, "{\left[ \begin{smallmatrix} I_n \\ 0 \end{smallmatrix} \right]}"] \&\& F^n \\
\& F^{2n} \ar[ru, "{\left[ I_n \; I_n \right]}"] \ar[rd, "{\left[ \begin{smallmatrix} I_n \\ \scriptscriptstyle 000 \end{smallmatrix} \begin{smallmatrix} \scriptscriptstyle 000 \\ I_n \end{smallmatrix} \right]}"] \& \\
F^n \ar[ru, "{\left[ \begin{smallmatrix} 0 \\ I_n \end{smallmatrix} \right]}"] \&\& F^{n+1}
\end{tikzcd} \]
where \( \left[ \begin{smallmatrix} I_n \\ \scriptscriptstyle 000 \end{smallmatrix} \right] \) and \( \left[ \begin{smallmatrix} \scriptscriptstyle 000 \\ I_n \end{smallmatrix} \right] \) denote the matrix \( I_n \) with an extra row of zeros added at the bottom and the top, respectively.

An automorphism of this representation is given by five matrices in the five positions, commuting with the structure maps. That is, matrices \( A, B, D \in F^{n \times n}, C \in F^{2n \times 2n}, E \in F^{(n+1) \times (n+1)} \) such that
\begin{align*}
\left[ \begin{smallmatrix} I_n \\ 0 \end{smallmatrix} \right] A & = C \left[ \begin{smallmatrix} I_n \\ 0 \end{smallmatrix} \right] & \left[ \begin{smallmatrix} 0 \\ I_n \end{smallmatrix} \right] B & = C \left[ \begin{smallmatrix} 0 \\ I_n \end{smallmatrix} \right] &
\left[ I_n \; I_n \right] C & = D \left[ I_n \; I_n \right] & \left[ \begin{smallmatrix} I_n \\ \scriptscriptstyle 000 \end{smallmatrix} \begin{smallmatrix} \scriptscriptstyle 000 \\ I_n \end{smallmatrix} \right] C & = E \left[ \begin{smallmatrix} I_n \\ \scriptscriptstyle 000 \end{smallmatrix} \begin{smallmatrix} \scriptscriptstyle 000 \\ I_n \end{smallmatrix} \right].
\end{align*}
The first two equations imply that \( C = \left[ \begin{smallmatrix} A & 0 \\ 0 & B \end{smallmatrix} \right] \). Then the third equation implies that \( A = B = D \). Now we turn to the final equation, and see that
\[ \left[ \begin{smallmatrix} A \\ \scriptscriptstyle 000 \end{smallmatrix} \begin{smallmatrix} \scriptscriptstyle 000 \\ A \end{smallmatrix} \right] = \left[ E_{\text{without last column}} \; E_{\text{without first column}} \right]. \]
One can see that this is only possible if both \( A \) and \( E \) are diagonal matrices with constant diagonals. This means exactly that the endomorphism is given by multiplication by a scalar.
\end{example}

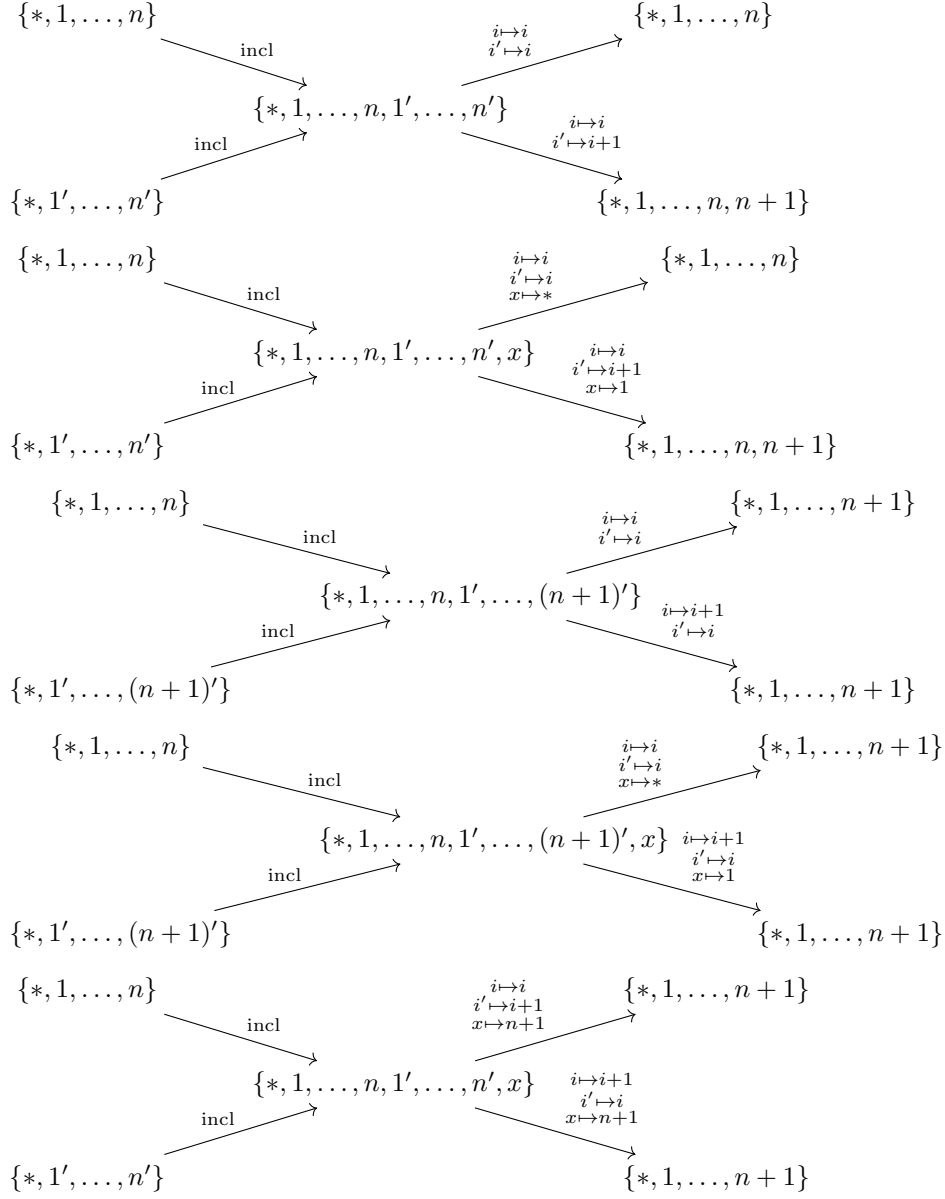
\begin{figure}
\begin{align*}
& \begin{tikzcd}[ampersand replacement = \&]
  \{ *, 1, \ldots, n \} \ar[rd, "\text{incl}"] \&\& \{ *, 1, \ldots, n \} \\
  \& \{ *, 1, \ldots, n, 1', \ldots, n' \} \ar[ru, "\begin{smallmatrix} i \mapsto i \\ i' \mapsto i \end{smallmatrix}"] \ar[rd, "\begin{smallmatrix} i \mapsto i \\ i' \mapsto i+1 
  \end{smallmatrix}"] \& \\
  \{ *, 1', \ldots, n' \} \ar[ru, "\text{incl}"] \&\& \{ *, 1, \ldots, n, n+1 \}
  \end{tikzcd} \\
& \begin{tikzcd}[ampersand replacement = \&]
  \{ *, 1, \ldots, n \} \ar[rd, "\text{incl}"] \&\& \{ *, 1, \ldots, n \} \\
  \& \{ *, 1, \ldots, n, 1', \ldots, n', x \} \ar[ru, "\begin{smallmatrix} i \mapsto i \\ i' \mapsto i \\ x \mapsto * \end{smallmatrix}"] \ar[rd, "\begin{smallmatrix} i \mapsto i \\ i' \mapsto i+1 \\ x \mapsto 1 \end{smallmatrix}"] \& \\
  \{ *, 1', \ldots, n' \} \ar[ru, "\text{incl}"] \&\& \{ *, 1, \ldots, n, n+1 \}
  \end{tikzcd} \\
& \begin{tikzcd}[ampersand replacement = \&]
  \{ *, 1, \ldots, n \} \ar[rd, "\text{incl}"] \&\& \{ *, 1, \ldots, n+1 \} \\
  \& \{ *, 1, \ldots, n, 1', \ldots, (n+1)' \} \ar[ru, "\begin{smallmatrix} i \mapsto i \\ i' \mapsto i \end{smallmatrix}"] \ar[rd, "\begin{smallmatrix} i \mapsto i+1 \\ i' \mapsto i \end{smallmatrix}"] \& \\
  \{ *, 1', \ldots, (n+1)' \} \ar[ru, "\text{incl}"] \&\& \{ *, 1, \ldots, n+1 \}
  \end{tikzcd} \\
& \begin{tikzcd}[ampersand replacement = \&]
  \{ *, 1, \ldots, n \} \ar[rd, "\text{incl}"] \&\& \{ *, 1, \ldots, n+1 \} \\
  \& \{ *, 1, \ldots, n, 1', \ldots, (n+1)', x \} \ar[ru, "\begin{smallmatrix} i \mapsto i \\ i' \mapsto i \\ x \mapsto * \end{smallmatrix}"] \ar[rd, "\begin{smallmatrix} i \mapsto i+1 \\ i' \mapsto i \\ x \mapsto 1 \end{smallmatrix}"] \& \\
  \{ *, 1', \ldots, (n+1)' \} \ar[ru, "\text{incl}"] \&\& \{ *, 1, \ldots, n+1 \}
  \end{tikzcd} \\
& \begin{tikzcd}[ampersand replacement = \&]
  \{ *, 1, \ldots, n \} \ar[rd, "\text{incl}"] \&\& \{ *, 1, \ldots, n+1 \} \\
  \& \{ *, 1, \ldots, n, 1', \ldots, n', x \} \ar[ru, "\begin{smallmatrix} i \mapsto i \\ i' \mapsto i+1 \\ x \mapsto n+1 \end{smallmatrix}"] \ar[rd, "\begin{smallmatrix} i \mapsto i+1 \\ i' \mapsto i \\ x \mapsto n+1 \end{smallmatrix}"] \& \\
  \{ *, 1', \ldots, n' \} \ar[ru, "\text{incl}"] \&\& \{ *, 1, \ldots, n+1 \}
  \end{tikzcd} \\
\end{align*}
\caption{Pointed set realizations for the preprojective representations of $\tilde{D}_4$ with two ingoing and two outgoing arrows; see Proposition~\ref{prop.preproj_and_preinj_are_induced}. The linearized representation obtained from the top-most diagram is shown to be indecomposable in Example~\ref{ex.D4.preproj.indec}. }
\label{table.preproj_D4tilde}
\end{figure}

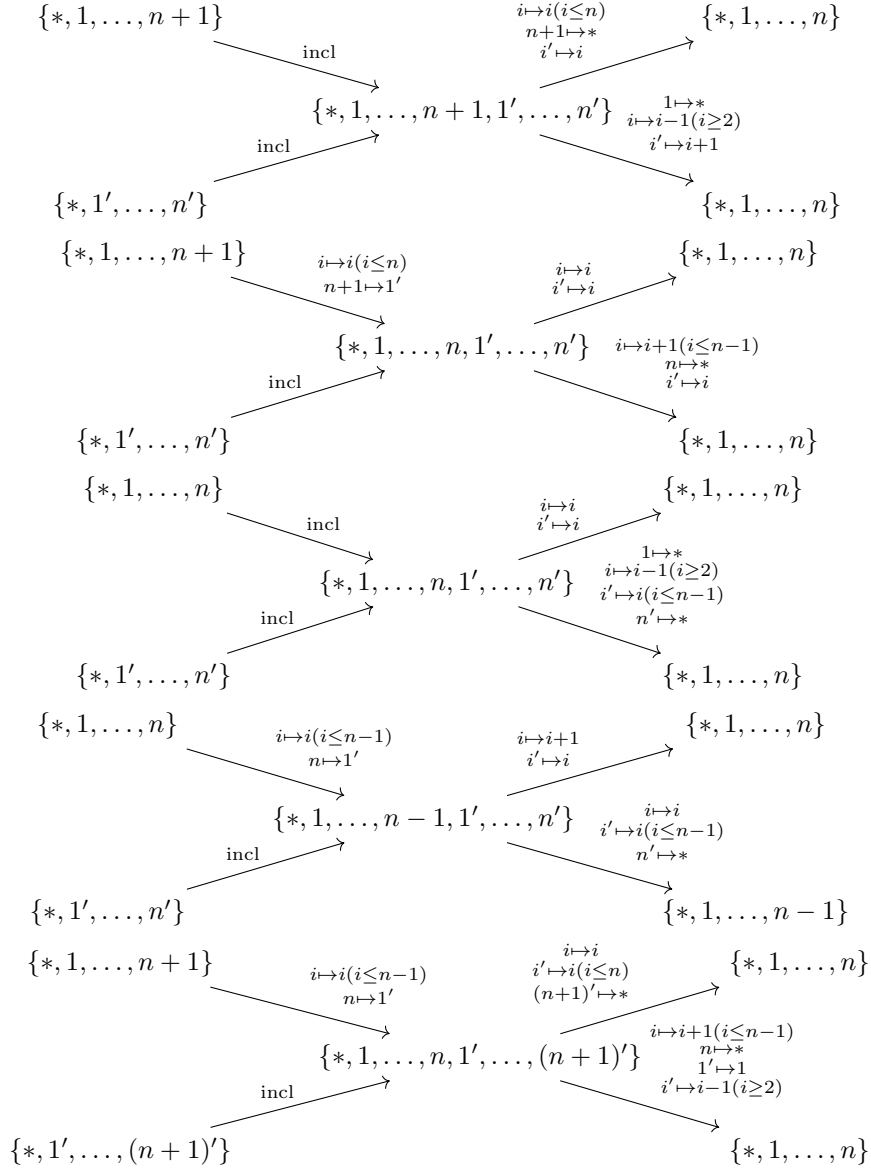
\begin{figure}
\begin{tikzcd}[ampersand replacement = \&]
  \{ *, 1, \ldots, n+1 \} \ar[rd, "\text{incl}"] \&\& \{ *, 1, \ldots, n \} \\
  \& \{ *, 1, \ldots, n+1, 1', \ldots, n' \} \ar[ru, "\begin{smallmatrix} i \mapsto i (i \leq n) \\ n+1 \mapsto * \\ i' \mapsto i \end{smallmatrix}"] \ar[rd, "{\begin{smallmatrix} 1 \mapsto * \\ i \mapsto i-1 (i \geq 2) \\ i' \mapsto i+1 
  \end{smallmatrix}}"] \& \\
  \{ *, 1', \ldots, n' \} \ar[ru, "\text{incl}"] \&\& \{ *, 1, \ldots, n \}
  \end{tikzcd}
\begin{tikzcd}[ampersand replacement = \&]
  \{ *, 1, \ldots, n+1 \} \ar[rd, "\begin{smallmatrix} i \mapsto i (i \leq n) \\ n+1 \mapsto 1' \end{smallmatrix}"] \&\& \{ *, 1, \ldots, n \} \\
  \& \{ *, 1, \ldots, n, 1', \ldots, n' \} \ar[ru, "\begin{smallmatrix} i \mapsto i \\ i' \mapsto i \end{smallmatrix}"] \ar[rd, "{\begin{smallmatrix} i \mapsto i+1 (i \leq n-1) \\ n \mapsto * \\ i' \mapsto i 
  \end{smallmatrix}}"] \& \\
  \{ *, 1', \ldots, n' \} \ar[ru, "\text{incl}"] \&\& \{ *, 1, \ldots, n \}
  \end{tikzcd}
\begin{tikzcd}[ampersand replacement = \&]
  \{ *, 1, \ldots, n \} \ar[rd, "\text{incl}"] \&\& \{ *, 1, \ldots, n \} \\
  \& \{ *, 1, \ldots, n, 1', \ldots, n' \} \ar[ru, "\begin{smallmatrix} i \mapsto i \\ i' \mapsto i \end{smallmatrix}"] \ar[rd, "{\begin{smallmatrix} 1 \mapsto * \\ i \mapsto i-1 (i \geq 2) \\ i' \mapsto i (i \leq n-1) \\ n' \mapsto * 
  \end{smallmatrix}}"] \& \\
  \{ *, 1', \ldots, n' \} \ar[ru, "\text{incl}"] \&\& \{ *, 1, \ldots, n \}
  \end{tikzcd}
\begin{tikzcd}[ampersand replacement = \&]
  \{ *, 1, \ldots, n \} \ar[rd, "\begin{smallmatrix} i \mapsto i (i \leq n-1) \\ n \mapsto 1' \end{smallmatrix}"] \&\& \{ *, 1, \ldots, n \} \\
  \& \{ *, 1, \ldots, n-1, 1', \ldots, n' \} \ar[ru, "\begin{smallmatrix} i \mapsto i+1 \\ i' \mapsto i \end{smallmatrix}"] \ar[rd, "{\begin{smallmatrix} i \mapsto i \\ i' \mapsto i (i \leq n-1) \\ n' \mapsto * \end{smallmatrix}}"] \& \\
  \{ *, 1', \ldots, n' \} \ar[ru, "\text{incl}"] \&\& \{ *, 1, \ldots, n-1 \}
  \end{tikzcd}
\begin{tikzcd}[ampersand replacement = \&]
  \{ *, 1, \ldots, n+1 \} \ar[rd, "\begin{smallmatrix} i \mapsto i (i \leq n-1) \\ n \mapsto 1' \end{smallmatrix}"] \&\& \{ *, 1, \ldots, n \} \\
  \& \{ *, 1, \ldots, n, 1', \ldots, (n+1)' \} \ar[ru, "\begin{smallmatrix} i \mapsto i \\ i' \mapsto i (i \leq n) \\ (n+1)' \mapsto * \end{smallmatrix}"] \ar[rd, "{\begin{smallmatrix} i \mapsto i+1 (i \leq n-1) \\ n \mapsto * \\ 1' \mapsto 1 \\ i' \mapsto i-1 (i \geq 2) \end{smallmatrix}}"] \& \\
  \{ *, 1', \ldots, (n+1)' \} \ar[ru, "\text{incl}"] \&\& \{ *, 1, \ldots, n \}
  \end{tikzcd}   
\caption{Pointed set realizations for the preinjective representations of $\tilde{D}_4$ with two ingoing and two outgoing arrows; see Proposition~\ref{prop.preproj_and_preinj_are_induced}.}
\label{table.preinj_D4tilde}
\end{figure}

Now let us focus on the case where \( Q \) has at least three outgoing arrows. We prove the following.

\begin{theorem} \label{thm:D4_three_outgoing}
Let $Q$ be the quiver of type $\tilde{D}_4$ with three or four outgoing arrows, and let $F$ be an algebraic extension of a finite field. Then the functor $\funct{free}_* \colon \rep(Q, \set)\to \rep(Q, \vectF)$ is additively surjective.
\end{theorem}

\begin{proof}
Let us first observe that we may assume the field \( F \) to be algebraically closed: Recall that by Theorem~\ref{thm.fieldext} a representation \( R \) is in the additive image of \( \funct{free}_* \) if and only if \( R \otimes_F \widehat{F} \) is, where \( \widehat{F} \) denotes the algebraic closure of \( F \). It follows that if we can show the theorem for \( F \) being algebraically closed then it also holds for smaller fields.

In view of Theorem~\ref{thm.APR_preserves_induced} it suffices to consider the case of three outgoing arrows. The same theorem, in conjunction with Proposition~\ref{prop.preproj_and_preinj_are_induced}, shows that all preprojective and preinjective representations are in the additive image of \( \funct{free}_* \). Thus it remains to consider regular representations.

We know explicitly the simple regular representations from \cite{RepresentationTheoryOfArtinAlgebrasII}*{XIII.2, Table 2.6}, by applying a reflection functor.

The simple regulars in the three tubes of rank \( 2 \) are
\[ \begin{tikzcd}
& 0 & \\
0 \ar[r] & F \ar[r] \ar[u] \ar[d] & F \\
& 0 & 
\end{tikzcd}
\qquad \text{ and } \qquad 
\begin{tikzcd}
& F & \\
F \ar[r] & F \ar[r] \ar[u] \ar[d] & 0 \\
& F & 
\end{tikzcd} \]
and versions of these obtained by permuting the outgoing arrows. When applying a reflection functor to translate from the case of two outgoing arrows to three outgoing arrows, these three tubes are the images of \( \mathcal{T}_0^{\Delta(\tilde{D}_4)} \), \( \mathcal{T}_\infty^{\Delta(\tilde{D}_4)} \), and \( \mathcal{T}_1^{\Delta(\tilde{D}_4)} \) from \cite{RepresentationTheoryOfArtinAlgebrasII}*{XIII.2, Table 2.6}. In particular the representations in two of them are additively \( \Set \)-realizable by Remark~\ref{rem:T0ogTinfty}. But then so is the third one by the symmetry of the situation.

There are three tubes of rank 2, which only differ by permutation of the three outgoing arrows. These can be seen to be \( \pSet_* \)-realizable in 3 steps:
Firstly, for \( \widetilde{A}_3 \) with alternating orientation there are two tubes of rank 2, and all representations in these tubes are string representations, hence \( \pSet_* \)-realizable.
Secondly, a comparison as in the proof of \ref{prop.D4tilde_via_A3tilde} turns these into two tubes of rank 2 for \( \widetilde{D}_4 \) with two ingoing and two outgoing arrows.
Thirdly, we apply Theorem~\ref{thm.APR_preserves_induced} to see that two of our tubes of rank 2 consist of \( \pSet_* \)-realizable representations. But then so does the third, because they only differ by permutation of the outgoing arrows.

Finally, it remains to consider the homogeneous tubes, that is tubes of rank 1. The representations in these tubes are of the form
\[ R = \left[ \begin{tikzcd}[sep=12mm, ampersand replacement = \&]
  \& F^m\&  \\
  F^m \ar[r, "{\left[ \begin{smallmatrix} I_m \\ 0 \end{smallmatrix} \right]}"] \& F^{2m} \ar[u,swap, "{\left[ \begin{smallmatrix} I_m & 0 \end{smallmatrix} \right]}"] \ar[r, "{\left[ \begin{smallmatrix} I_m & I_m \end{smallmatrix} \right]}"] \ar[d, "{\left[ \begin{smallmatrix} I_m & J_m(\lambda) \end{smallmatrix} \right]}"] \& F^m \\
  \& F^m \& \end{tikzcd} \right] \]
where \( J_m(\lambda) \) is a Jordan block of size \( m \) with some eigenvalue \( \lambda \not\in \{ 0,1 \} \).

Applying the base change to the central vector space whose matrix is given in block form by \( \left[ \begin{smallmatrix} I_m & -\lambda\cdot I_m \\ 0_m & I_m \end{smallmatrix} \right] \) gives
\[ \begin{tikzcd}[sep=12mm, ampersand replacement = \&]
  \& F^m \&  \\
  F^m \ar[r, "{\left[ \begin{smallmatrix} I_m \\ 0 \end{smallmatrix} \right]}"] \& F^{2m} \ar[u,swap, "{\left[ \begin{smallmatrix} I_m & - \lambda I_m \end{smallmatrix} \right]}"] \ar[rr, "{\left[ \begin{smallmatrix} I_m & (1 - \lambda) I_m \end{smallmatrix} \right]}"] \ar[d, "{\left[ \begin{smallmatrix} I_m & J_m(0) \end{smallmatrix} \right]}"] \&\& F^m \\
  \& F^m \& \end{tikzcd} \]
Note that now we have at most one non-zero entry in each column of the matrices, and thus the representation is additively \( \Set \)-realizable by Corollary~\ref{cor.finite_cat_is_enough}.
\end{proof}

\begin{remark}
For the case of four outgoing arrows we can also immediately use the classification given in \cite{medina2004four} --- their result concerns four ingoing arrows, so we need to apply vector space duality, equivalently transpose all matrices.

It is clear that for all representations that are not of type $0$ (following the notation of \cite{medina2004four}), every column of a transposed matrix is either identically zero, or equal to $1$ at exactly one index. In particular, the representations have a coherent basis and are therefore in the essential image of $\funct{free}_*$ by Proposition~\ref{prop:mult-basis}. The only remaining case is the family of representations of the form
\[\begin{tikzcd}[sep=16mm, ampersand replacement = \&]
\& F^m\&  \\
F^m  \& \ar[l, "{\left[ \begin{smallmatrix} I_m & 0 \end{smallmatrix} \right]}"]F^{2m}\ar[u,swap, "{\left[ \begin{smallmatrix} I_m & J_m(\lambda) \end{smallmatrix} \right]}"] \ar[r, "{\left[ \begin{smallmatrix} I_m & I_m \end{smallmatrix} \right]}"]\ar[d, "{\left[ \begin{smallmatrix} 0 & I_m \end{smallmatrix} \right]}"] \& F^m \\
\& F^m   \& \end{tikzcd}.\]
where $J_m(\lambda)$ is the $m\times m$ Jordan block with eigenvalue $\lambda\in F$ and $\lambda\neq 0,1$, for $F$ algebraically closed as in the proof of Theorem~\ref{thm:D4_three_outgoing}. As before, applying the base change to the central vector space whose matrix is given in block form by $\left[ \begin{smallmatrix} I_m & -\lambda\cdot I_m \\ 0_m & I_m \end{smallmatrix} \right]$
gives
\[\begin{tikzcd}[sep=16mm, ampersand replacement = \&]
\& F^m\&  \\
F^m \& \ar[l, "{\left[ \begin{smallmatrix} I_m & -\lambda\cdot I_m \end{smallmatrix} \right]}"] F^{2m}\ar[u,swap, "{\left[ \begin{smallmatrix} I_m & J_m(0) \end{smallmatrix} \right]}"] \ar[r, "{\left[ \begin{smallmatrix} I_m & (1-\lambda)I_m \end{smallmatrix} \right]}"] \ar[d, "{\left[ \begin{smallmatrix} I_m & 0 \end{smallmatrix} \right]}"] \& F^m \\
\& F^m  \& \end{tikzcd}.\]
Such representations are additively $\Set$-realizable by Corollary~\ref{cor.finite_cat_is_enough}.
\end{remark}

\section{Finite grids}
\label{sec.finitegrids}

A finite grid is a poset of the form
\[ \{ 1, \ldots, n_1 \} \times \{ 1, \ldots, n_2 \} \times \cdots \times \{ 1, \ldots, n_d \} \]
equipped with the coordinatewise partial order, where each factor is totally ordered. 
\begin{corollary} \label{cor:finite_grods_of_infinite_type}
For a finite grid the following are equivalent:
\begin{enumerate}
\item There are infinitely many indecomposable representations.
\item There are infinitely many additively \( \Set \)-realizable indecomposable representations.
\item There are infinitely many indecomposable representations which are not \( \Set \)-realizable.
\end{enumerate}
\end{corollary}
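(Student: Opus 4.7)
The implications (2)$\Rightarrow$(1) and (3)$\Rightarrow$(1) are immediate. The content is to establish (1)$\Rightarrow$(2) and (1)$\Rightarrow$(3) and to verify the ``furthermore'' clause.

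My plan for (1)$\Rightarrow$(2)$\wedge$(3) is to exhibit, inside any finite grid $G$ of infinite representation type, a full subposet $S$ whose induced poset category is isomorphic to the path category of an extended Dynkin quiver of type $\tilde A$ in a non-cyclic orientation. Then Theorem~\ref{thm.Ainf} classifies the band representations of $S$ and Proposition~\ref{prop.Lan_is_realizable} transfers each such band representation to an indecomposable of $G$ via left Kan extension along the fully faithful inclusion $S\hookrightarrow G$, preserving additive $\Set$-realizability. Concretely, inside $A_3\otimes A_3$ I take $S = \{(1,2),(2,1),(1,3),(3,1),(2,3),(3,2)\}$; the order inherited from the grid makes $(1,2),(2,1)$ the minima, $(2,3),(3,2)$ the maxima, with $(1,3)$ covering only $(1,2)$, $(3,1)$ covering only $(2,1)$, $(2,3)$ covering $(1,3)$ and $(2,1)$, and $(3,2)$ covering $(3,1)$ and $(1,2)$, so the Hasse diagram is the $6$-cycle with two sources and two sinks. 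The two non-covering relations $(1,2) \le (2,3)$ and $(2,1) \le (3,2)$ coincide with the unique length-two cycle composites, so no commutation relation is imposed and $S$ is isomorphic as a category to the path category of $\tilde A_5$ in this orientation. Together with Kleiner's classification of critical posets of infinite representation type and explicit analogous embeddings for the remaining critical infinite-type finite grids (for example $S \iso \{1000,0100,1001,0110,1101,1110\} \subseteq A_2^4$), every finite grid of infinite representation type admits such a full subposet $S$.

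Having reduced to $\tilde A_5$, I would invoke Theorem~\ref{thm.Ainf}: band representations are parameterized by $(f(X), m)$ with $f$ monic irreducible over $F$ and $m \geq 1$, and additive $\Set$-realizability is equivalent to $f$ being the minimal polynomial of a root of unity together with $m$ being $1$ in characteristic zero or a power of the characteristic otherwise. Both the realizable and the non-realizable subfamilies are infinite over any field: in characteristic zero, cyclotomic polynomials form an infinite realizable family while non-cyclotomic monic irreducibles (e.g.\ $X - c$ for $c \notin \{0,1,-1\}$) form an infinite non-realizable family; in positive characteristic $p$ every monic irreducible $\neq X$ is the minimal polynomial of a root of unity, so varying $f$ with $m = 1$ gives infinitely many realizables while fixing any $m$ that is not a power of $p$ and varying $f$ gives infinitely many non-realizables. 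Because left Kan extension along the full inclusion $S \hookrightarrow G$ is fully faithful, it preserves the local endomorphism rings $F[X]/(f^m)$ of band representations and hence indecomposability and pairwise non-isomorphism, yielding the two desired infinite families of indecomposables of $G$.

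The ``furthermore'' clause is treated separately: by the classification of representation-finite finite posets, a finite grid of finite representation type lies in a short explicit list for which the indecomposables are classically known and each admits a multiplicative basis, placing them in the essential image of $\funct{free}_*$ by Proposition~\ref{prop:mult-basis}; the two-dimensional case is already carried out in \cite{brodzki2020complexity}. The main obstacle is the first step for the main equivalence---combinatorially producing a full subposet $S \iso \tilde A_n$ inside every critical infinite-type finite grid supplied by Kleiner's theorem. This is a finite but slightly delicate case analysis modelled on the $A_3\otimes A_3$ computation above, and the verification that no commutation relations are forced (because the only length-two composites in $S$ are unique and already accounted for by the poset) is the key check that has to be repeated in each case.
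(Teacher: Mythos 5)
Your overall strategy (reduce to the minimal grids of infinite representation type, embed an extended Dynkin quiver, apply Theorem~\ref{thm.Ainf}, and transfer along the fully faithful left Kan extension of Proposition~\ref{prop.Lan_is_realizable}) is the same as the paper's, and your $\tilde A_5$ full subposet of $\{1,2,3\}^2$ and the counting of realizable versus non-realizable band parameters $(f,m)$ are correct. However, there is a genuine gap in the step you yourself flag as the main obstacle: the minimal infinite-type grids are $\{1,2,3\}^2$, $\{1,2\}^3$, and $\{1,2\}\times\{1,2,3,4,5\}$, and the last of these admits \emph{no} full subposet whose poset category is the free category on a quiver of type $\tilde A$ (nor of type $\tilde D_4$). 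Indeed, in $\{1,2\}\times\{1,\dots,n\}$ every element of the form $(1,j)$ has all elements below it lying in a chain, hence at most one lower cover in any full subposet, and dually every $(2,i)$ has at most one upper cover; so every sink of a putative Hasse cycle would have to be of the form $(2,i)$ and every source of the form $(1,j)$. But two sources $(1,j_1)<(1,j_2)$ are comparable, and a source of a cycle admits no incoming path realizing that relation, a contradiction. The only remaining cycles have a single source and a single sink, and those force a commutativity relation, so the full subcategory is not free. (The same degree count rules out a degree-$4$ vertex, hence $\tilde D_4$.) Your proposed ``finite but slightly delicate case analysis'' therefore cannot be completed for the $2\times 5$ grid, and your appeal to Kleiner's list is also not quite the right tool here, since it concerns subspace representations of posets rather than representations of the incidence category.

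The paper closes exactly this case by a different device: it restricts to the subcategory of representations of $\{1,2\}\times\{1,\dots,5\}$ that vanish at the two corner vertices and whose structure maps along certain prescribed arrows are identities, observes that this subcategory is preserved by $\funct{free}_*$ and $\funct{forget}_*$, and identifies it with $\rep$ of $\tilde D_4$ with two ingoing and two outgoing arrows, whence Proposition~\ref{prop.D4tilde_via_A3tilde} applies. Some such non-full comparison is unavoidable for the $2\times 5$ grid, so your argument needs to be supplemented by this (or an equivalent) construction. Two minor further points: your auxiliary example inside $A_2^{\,4}$ is redundant, since that grid already contains $\{1,2\}^3$ as a full subposet; and the parenthetical claim that in characteristic $p$ every monic irreducible polynomial other than $X$ is the minimal polynomial of a root of unity is only true over algebraic extensions of $\mathbb{F}_p$, though this does not affect the existence of the two infinite families.
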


\begin{proof}
Clearly (2) implies (1), and (3) implies (1). Thus it suffices to show that (1) implies the other two statements. Further, by Proposition~\ref{prop.Lan_is_realizable}, it suffices to consider the minimal grids having infinitely many indecomposable representations. These are
\[ \{ 1,2,3\}^2 \text{, } \{1,2\} \times \{1,2,3,4,5\} \text{, and } \{1,2\}^3. \]
The minimality of the planar grids follows directly from the summary in \cite[Theorem 1.3]{bauer2020cotorsion}. Clearly, any non-planar grid must contain the third poset listed above; it follows immediately from the discussion below that this poset admits infinitely many indecomposable representations.

For the first and last of these, we immediately find full subcategories which are quivers of type \( \tilde{D}_4 \) (with two ingoing and two outgoing arrows) and \( \tilde{A}_5 \), as illustrated in the following pictures.
\[ \begin{tikzcd}
(1,1) \ar[r, dotted] \ar[d, dotted] & (1,2) \ar[r,dotted] \ar[d] & (1,3) \ar[d,dotted] \\
(2,1) \ar[r] \ar[d, dotted] & (2,2) \ar[r] \ar[d] & (2,3) \ar[d,dotted] \\
(3,1) \ar[r, dotted] & (3,2) \ar[r,dotted] & (3,3) 
\end{tikzcd}
\quad
\begin{tikzcd}[sep=2mm]
& (2,1,1) \ar[rr] \ar[dd] && (2,1,2) \ar[dd,dotted]  \\
(1,1,1) \ar[rr,dotted] \ar[dd,dotted] \ar[ru,dotted] && (1,1,2) \ar[dd] \ar[ru] \\
& (2,2,1) \ar[rr,dotted] && (2,2,2) \\
(1,2,1) \ar[rr] \ar[ru] && (1,2,2) \ar[ru,dotted] 
\end{tikzcd}
\]
Thus we get the result from Proposition~\ref{prop.D4tilde_via_A3tilde} and Theorem~\ref{thm.Ainf}, respectively.

For \( \{ 1,2 \} \times \{1,2,3,4,5 \} \) we consider representations vanishing in the first and last vertex, and such that the structure maps indicated in the following picture are identities.
\[ \begin{tikzcd}
0 \ar[r] \ar[d] & R_{(1,2)} \ar[r] \ar[d] & R_{(1,3)} \ar[r,equal] \ar[d,equal] & R_{(1,4)} \ar[r] \ar[d] & R_{(1,5)} \ar[d] \\
R_{(2,1)} \ar[r] & R_{(2,2)} \ar[r,equal] & R_{(2,3)} \ar[r] & R_{(2,4)} \ar[r] & 0
\end{tikzcd} \]
Note that the functors \( \funct{free}^\mathrm{pt}_* \) and \( \funct{forget}^\mathrm{pt}_* \) between vector space representations and pointed set representations preserve this subcategory of representations. Thus we can discuss additive \( \Set \)-realizability purely within this subcategory.

Now we can observe that this subcategory is equivalent to \( \tilde{D}_4 \) with two ingoing and two outgoing vertices, so the proof is complete by Proposition~\ref{prop.D4tilde_via_A3tilde}.
\end{proof}

We also have the following counterpart to Corollary~\ref{cor:finite_grods_of_infinite_type}.

\begin{observation} \label{obs:repfinite_grids}
For any grid of finite representation type, all representations are in the essential image of \( \funct{free}_* \).
\end{observation}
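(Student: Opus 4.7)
The plan is to combine the enumeration of finite representation type grids implicit in Corollary~\ref{cor:finite_grods_of_infinite_type} with the classical classification of their indecomposables. As argued in the proof of that corollary, the minimal finite grids of infinite representation type are $\{1,2,3\}^2$, $\{1,2\}^3$, and $\{1,2\} \times \{1,\ldots,5\}$; by the Kan-extension / full-subcategory argument underpinning Proposition~\ref{prop.Lan_is_realizable}, any finite grid containing one of these as a full subposet is already of infinite representation type. Consequently any finite grid of finite representation type must be (up to permutation of factors) either a linear grid $\{1,\ldots,m\}$ of type $A$ or a two-row grid $\{1,2\} \times \{1,\ldots,n\}$ with $n \leq 4$.

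Next I would invoke the classification of the indecomposable representations for each of these grids. For type $A$ this is classical: every indecomposable is an interval module. For the commutative ladders $\{1,2\} \times \{1,\ldots,n\}$ with $n \leq 4$, every indecomposable is again an interval module, supported on a connected convex subposet. Equivalently, each indecomposable is an indicator representation in the sense of Section~1. By Corollary~\ref{cor.const_essimg}, every indicator representation lies in the essential image of $\funct{free}_*$; and since a disjoint union of pointed sets (with base points identified) is sent by $\funct{free}_*$ to the direct sum of the associated vector spaces, this essential image is closed under finite direct sums. Combined with the Krull--Schmidt decomposition available over any finite representation type grid, this yields the desired conclusion.

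The main point requiring genuine work is the classification of indecomposables for the ladders $\{1,2\} \times \{1,\ldots,n\}$ with $n = 3, 4$. For $n \leq 2$ one can verify directly that the number of connected convex subposets agrees with the number of indecomposables (for instance, $13$ for $n = 2$, matching the standard count). For $n = 3, 4$ one could either carry out an explicit Auslander--Reiten-quiver computation, or invoke the existing classification of Escolar--Hiraoka for commutative ladders of finite type. All remaining steps are elementary; no extra care is needed for the higher-dimensional grids since, having ruled out the three minimal wild/tame subposets, no grid with three or more non-trivial directions survives.
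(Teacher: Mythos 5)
Your reduction to the two-row ladders $\{1,2\}\times\{1,\dots,n\}$ with $n\le 4$ (and, via Kan extension/restriction, to the single largest one with $n=4$) is correct and is exactly the first step of the paper's proof. The gap is in the classification claim that follows: it is \emph{not} true that every indecomposable representation of $\{1,2\}\times\{1,2,3,4\}$ is an interval module, i.e.\ an indicator representation. The Auslander--Reiten quiver of this ladder --- precisely the Escolar--Hiraoka computation you propose to invoke --- contains indecomposables whose dimension vectors have entries equal to $2$, for instance one with dimension vector $\left(\begin{smallmatrix}1&2&1&0\\0&1&2&1\end{smallmatrix}\right)$. For such representations Corollary~\ref{cor.const_essimg} says nothing, so your argument only covers the thin summands and the statement does not follow.

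What is actually needed, and what the paper does, is to treat the finitely many non-thin indecomposables separately: for each of them one writes down an explicit basis in which every column of every structure matrix contains at most one nonzero entry, equal to $1$, and then applies Proposition~\ref{prop:mult-basis} (existence of a multiplicative basis characterizes the essential image of $\funct{free}_*$) rather than Corollary~\ref{cor.const_essimg}. Concretely, the non-thin indecomposables are obtained by extending by identities a couple of local patterns with a single $F^2$, plus one representation in the middle of the AR quiver with two separate $2$'s, and each admits such a basis. To repair your proof you would have to carry out exactly this verification; the rest of your outline (closure of the essential image of $\funct{free}_*$ under direct sums via wedge sums of pointed sets, Krull--Schmidt) is fine.
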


\begin{proof}
The key observation here is that it suffices to check the unique largest representation finite grid: \( \{1,2\} \times \{1,2,3,4\} \). For this grid one easily computes the Auslander--Reiten quiver. See for instance \cite{EscolarHiraoka}*{Figure~17}. Most indecomposables are indicator representations.

Many indecomposables have a region of \( 2 \)-s in their dimension vector, and structure maps are given by extending by identities the representations
\[ \begin{tikzcd}
  && F && F \ar[rd,"{\left[ \begin{smallmatrix} 1 \\ 0 \end{smallmatrix} \right]}"] && \\
  F \ar[r,"{\left[ \begin{smallmatrix} 1 \\ 0 \end{smallmatrix} \right]}"] & F \ar[ru,"{[1 \; 0]}"] \ar[rd,"{[1 \; 1]}"] && \text{ or } && F \ar[r,"{[1 \; 1]}"] & F \\
  && F && F \ar[ru,"{\left[ \begin{smallmatrix} 0 \\ 1 \end{smallmatrix} \right]}"] &&
\end{tikzcd} \]
Finally, there is a representation in the middle of the Auslander--Reiten quiver with two separate \( 2 \)-s, which has an explicit realization obtained by combining the two patterns above:
\[ \begin{tikzcd}
F \ar[r, "{\left[ \begin{smallmatrix} 1 \\ 0 \end{smallmatrix} \right]}"] & F^2 \ar[r,"{[1 \; 1]}"] & F \ar[r] & 0 \\
0 \ar[r] \ar[u] & F \ar[u, "{\left[ \begin{smallmatrix} 0 \\ 1 \end{smallmatrix} \right]}"] \ar[r, "{\left[ \begin{smallmatrix} 1 \\ 0 \end{smallmatrix} \right]}"] & F^2 \ar[r,"{[1 \; 1]}"] \ar[u,"{[1 \; 0]}"] & F \ar[u]
\end{tikzcd} \]
In all cases we observe that all matrices have at most one \( 1 \) in each column, and the rest \(0 \)-s. Thus all indecomposables are in the essential image of \( \funct{free}_* \).
\end{proof}

\section{Higher homologies: Additively \( \Z \)-realizable representations}
\label{sec.higherhom}

In this short section we take a look at which linear representations are realizable as a summand of \( \operatorname{H}_n \) of some representation in topological spaces.

The first half of this section is very much parallel to Section~\ref{sec.basic}: Much as we connected additive \( \operatorname{H}_0 \)-realizability to additive \( \Set \)-realizability there, here we get a connection to representations in abelian groups.

We first show that all representations of abelian groups are realizable via \( \operatorname{H}_n \), generalizing \cite{harrington2019stratifying} which in turn generalizes the first version of this type of result from \cite{carlsson2007theory} (actually a version of Proposition~\ref{prop.ZRealizableOverPrime} below).

\begin{proposition}
Let \( X \) be a small category, and \( n \geq 1 \). Then the functor \( (\operatorname{H}_n)_* \colon \rep(X, \Top) \to \rep(X, \mathbf{Ab}) \) is essentially surjective.
\end{proposition}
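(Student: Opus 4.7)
The plan is to exhibit a functor $M_n \colon \mathbf{Ab} \to \Top$ equipped with a natural isomorphism $\operatorname{H}_n(M_n(A); \mathbb{Z}) \cong A$. Once this is in hand, composing $M_n$ with the given $R \colon X \to \mathbf{Ab}$ yields a representation $D := M_n \circ R \in \rep(X, \Top)$, and naturality of the isomorphism immediately gives $\operatorname{H}_n(D) \cong R$, proving density.

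To construct $M_n$, I would use the Dold--Kan correspondence rather than ad hoc Moore space constructions. For $A \in \mathbf{Ab}$, let $A[n]$ denote the non-negatively graded chain complex with $A$ in degree $n$ and zero elsewhere, and let $K \colon \mathbf{Ch}_{\geq 0}(\mathbf{Ab}) \to \mathbf{sAb}$ be the inverse of the normalized chain complex functor, giving the Dold--Kan equivalence with simplicial abelian groups. Then set
\[ M_n(A) := \bigl|\, U\,K(A[n])\,\bigr|, \]
where $U$ forgets the abelian group structure on a simplicial abelian group, and $|\cdot|$ is geometric realization. Each of $K$, $U$, and $|\cdot|$ is a strict functor, so $M_n$ is functorial on the nose.

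Next I would verify the required homological property. Dold--Kan identifies the simplicial homotopy groups of the underlying simplicial set $U\,K(A[n])$ with the homology groups of $A[n]$, so $\pi_k(M_n(A)) = 0$ for $k \neq n$ and $\pi_n(M_n(A)) \cong A$. Since $n \geq 1$, the space $M_n(A)$ is an Eilenberg--MacLane space of type $K(A,n)$, in particular $(n-1)$-connected. The Hurewicz theorem then gives a natural isomorphism $\operatorname{H}_n(M_n(A); \mathbb{Z}) \cong \pi_n(M_n(A)) \cong A$, and naturality is precisely what allows us to transport this to the representation level: applying $\operatorname{H}_n$ pointwise to $D = M_n \circ R$ recovers $R$.

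The main obstacle is bookkeeping around strict functoriality: a naive Moore space approach chooses $M(A,n)$ object by object, and then can only realize morphisms of abelian groups by continuous maps defined up to homotopy, which does not yield an honest functor out of $X$. The Dold--Kan detour sidesteps this issue entirely, since all three ingredients ($K$, $U$, and geometric realization) are honest functors, and the crucial isomorphism $\operatorname{H}_n(M_n(A);\mathbb{Z}) \cong A$ is natural in $A$. The hypothesis $n \geq 1$ is used in an essential way when invoking Hurewicz; for $n = 0$ the analogous statement would fail and is indeed the subject of the rest of the paper.
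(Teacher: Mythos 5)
Your proof is correct, but it takes a genuinely different route from the paper. The paper works with a projective presentation $\bigoplus_r P_{v_r} \to \bigoplus_g P_{v_g} \to R \to 0$ in $\rep(X,\mathbf{Ab})$ and realizes it topologically by hand: a wedge of $n$-cells for the generators, $(n+1)$-cells attached according to the coefficients of the relations, with the structure maps of $X$ acting by permuting cells; cellular homology then returns $R$ on the nose. You instead build a strict functor $M_n = |{-}| \circ U \circ K \circ ({-})[n] \colon \mathbf{Ab} \to \Top$ via Dold--Kan and recover $R$ as $\operatorname{H}_n(M_n \circ R)$ using the (natural) Hurewicz isomorphism, the case $n=1$ going through because $\pi_1$ of a simplicial abelian group is already abelian. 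Both arguments hinge on the same essential point that you correctly flag --- Moore spaces are only functorial up to homotopy, so one must produce an honest functor into $\Top$ --- and both resolve it, you by composing three strict functors, the paper by making all choices at the level of the presentation and then attaching cells equivariantly with respect to the $X$-action. Your argument is shorter and choice-free, at the cost of invoking Dold--Kan, the Kan property of simplicial groups, and Hurewicz, and of producing infinite-dimensional Eilenberg--MacLane spaces; the paper's construction is elementary (cellular homology only) and yields $(n+1)$-dimensional CW complexes, which are finite when the groups involved are finitely generated --- a feature that matters for the TDA motivation and for the lineage of this result through Carlsson--Zomorodian and Harrington et al. One small point worth making explicit in your write-up: the natural basepoint needed for the Hurewicz argument exists because $K(A[n])_0 = 0$ for $n \geq 1$, so $M_n(A)$ is a reduced, connected space with a canonical basepoint preserved by all the maps $M_n(f)$.
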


\begin{proof}
The proof of \cite{harrington2019stratifying}*{Theorems~2.11 and 2.12} applies in this more general setup with only minor adjustments: Let \( M \in \rep(X, \mathbf{Ab}) \). We can always find a projective presentation
\[ \bigoplus_{r \in R} P_{v_r} \to \bigoplus_{g \in G} P_{v_g} \to M \to 0, \]
where \( G \) and \( R \) are sets (which we think about as generators and relations), \( v_g \) and \( v_r \) are objects in \( X \), and \( P_v \) is the projective representation given by \( P_v(w) = \Z X(v,w) \), the free abelian group generated by the set \( X(v,w) \), with structure maps given by post-compositions.

The idea now is to build a representation in \( (n+1) \)-dimensional CW complexes out of the same information. To that end, for any object \( w \in X \) let \( C(w)_0 \) be a single point, and let \( C(w)_n \) be obtained by attaching an \( n \)-cell \( e^n_{g, f} \) to the base point for any generator \( g \in G \) and any morphism \( f \in X(v_g,w) \).

Note that we have constructed a representation \( C_n \) of \( X \) in \( n \)-dimensional CW complexes: for a morphism \( h \in X(w_1, w_2) \) we let \( C_n(h) \) identically map \( e^n_{g, f} \subset C(w_1)_n \) to \( e^n_{g, hf} \subset C(w_2)_n \).

Finally we obtain \( C(w) = C(w)_{n+1} \) by attaching an \( (n+1) \)-cell \( e^{n+1}_{r, h} \) for any relation \( r \in R \) and any morphism \( h \in X(v_r, w) \) in the following way obtained from the projective resolution above.
Consider
\[ P_{v_r}(v_r) \ni \operatorname{id}_{v_r} \mapsto \sum_{g \in G} \sum_{f \in X(v_g, v_r)} \alpha_{g,f,r} f \in \bigoplus_{g \in G} P_{v_g}(v_r). \]
Note that the double sum on the right hand side is finite (as it defines an element of the direct sum). 
In \( C(v_r) \), the boundary of \( e^{n+1}_{r, \operatorname{id}_{v_r}} \) is glued to the \( n \)-skeleton in such a way that for each \( n \)-cell \( e^n_{r, f} \in C(v_r)_n \) the degree of the induced map \( S^n \to S^n \), obtained by composing the attaching map \( S^n \to C(v_r)_n \) with the quotient map \( C(v_r)_n \to S^n \) identifying all other \(n\)-cells of \( C(v_r)_n \) to the base point, is given by \( \alpha_{g,f,r} \).

In \( C(w) \), for a non-identity morphism \( h \in X(v_r, w) \) we glue the boundary of \( e^{n+1}_{r, h} \) to the \( n \)-skeleton by composing the attaching map for \( e^{n+1}_{r, \operatorname{id}_{v_r}} \) with \( C_n(h) \colon C(v_r)_n \to C(w)_n \).

This construction delivers a representation \( C \) of \( X \) in \( (n+1) \)-dimensional CW complexes. It follows from the definition of cellular homology that \( \operatorname{H}_n(C) \) is computed exactly as the cokernel of the projective presentation we started with above, and thus that \( \operatorname{H}_n(C) = M \).
\end{proof}

In particular we note that for any field \( F \) the essential (resp.\ additive) image of \( \operatorname{H}_n(-;F)_* \colon \rep(X, \Top) \to \rep(X, \VectF) \) coincides with the essential (resp.\ additive) image of \( - \otimes_{\Z} F \colon \rep(X, \mathbf{Ab}) \to \rep(X, \VectF) \). We will call representations in this image \emph{(additively) \( \Z \)-realizable}.

Note that the functor \( \funct{free} \colon \Set \to \VectF \) factors through \( \mathbf{Ab} \), so that additive \( \Set \)-realizability is a stronger property than additive \( \Z \)-realizability.

\medskip
Pursuing \( \Z \)-realizability, we get the following analogue of Proposition~\ref{prop.repinfsets}.

\begin{proposition} \label{prop.reduction_to_Ab_fg}
Let \( X \) be a finite category, and \( R \in \rep(X, \vectF) \). If \( R \) is a direct summand of \( S \otimes_{\Z} F \) for some \( S \in \rep(X, \mathbf{Ab}) \), then there is a representation \( S_{\rm f.g.} \) of \( X \) in finitely generated abelian groups such that \( R \) is also a direct summand of \( S_{\rm f.g.} \otimes_{\Z} F \).
\end{proposition}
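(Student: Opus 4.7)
The strategy is to realize $S$ as a filtered colimit of subrepresentations $T$ whose pointwise values $T(v)$ are already finitely generated abelian groups, and then use compactness of $R$ in $\rep(X,\Vect)$ to factor the splitting $\psi\colon R \to S\otimes_{\mathbb{Z}} F$ through some $T\otimes_{\mathbb{Z}} F$. By assumption we have a pair of morphisms $\psi\colon R \to S\otimes_{\mathbb{Z}} F$ and $\varphi\colon S\otimes_{\mathbb{Z}} F\to R$ in $\rep(X,\Vect)$ with $\varphi\circ\psi=\operatorname{id}_R$; the goal is to produce an analogous pair with $S$ replaced by a representation in finitely generated abelian groups.

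First I would let $\mathcal{F}$ be the directed poset of subrepresentations of $S$ generated by finitely many elements $(v_1,t_1),\ldots,(v_k,t_k)$ with $t_i\in S(v_i)$. This poset is filtered (the union of two finite generating sets is finite), and every element of $S$ lies in some such subrepresentation, so $S=\operatorname{colim}_{T\in\mathcal{F}}T$ as a filtered colimit in $\rep(X,\mathbf{Ab})$. The key observation is that the finiteness of $X$ forces every $T\in\mathcal{F}$ to be pointwise finitely generated: if $T$ is generated by $(v_1,t_1),\ldots,(v_k,t_k)$, then for any object $w$ the group $T(w)$ is generated by the finite set $\{S(f)(t_i):1\le i\le k,\ f\in X(v_i,w)\}$. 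Since colimits in $\rep(X,-)$ are computed pointwise and $-\otimes_{\mathbb{Z}} F$ is a left adjoint, we obtain $S\otimes_{\mathbb{Z}} F=\operatorname{colim}_{T\in\mathcal{F}}(T\otimes_{\mathbb{Z}} F)$ as a filtered colimit in $\rep(X,\Vect)$.

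The final ingredient is compactness of $R$. Since $X$ is finite, $\rep(X,\Vect)$ is equivalent to the category of modules over the finite-dimensional category algebra $F[X]$; the finite-dimensional object $R$ is then finitely presented, and hence $\Hom_{\rep(X,\Vect)}(R,-)$ commutes with filtered colimits. Applied to the presentation above, the map $\psi$ factors as $R\xrightarrow{\psi'}T_0\otimes_{\mathbb{Z}} F\xrightarrow{\iota}S\otimes_{\mathbb{Z}} F$ for some $T_0\in\mathcal{F}$. Setting $\varphi'=\varphi\circ\iota$ yields $\varphi'\circ\psi'=\operatorname{id}_R$, so $R$ is a direct summand of $T_0\otimes_{\mathbb{Z}} F$, and we may take $S_{\rm f.g.}=T_0$. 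I do not expect any serious obstacle; the only subtlety worth flagging is that $\iota$ need not be injective (as $-\otimes_{\mathbb{Z}} F$ is not left exact in general), but this is irrelevant since we only need a factorization, not an inclusion.
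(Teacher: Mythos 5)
Your proof is correct. It is the same underlying idea as the paper's --- exhaust $S$ by subrepresentations generated by finitely many elements, note that finiteness of $X$ makes each of these pointwise finitely generated, and factor the splitting through one of them --- but the mechanism you use to obtain the factorization is different. The paper argues concretely: it picks a basis $b_1,\ldots,b_r$ of each $R(x)$, writes $\iota(b_i)=\sum_j s_{i,j}\otimes f_{i,j}$, lets $S_{\rm f.g.}$ be the subrepresentation generated by the finitely many $s_{i,j}$, and asserts that $\iota$ factors through $S_{\rm f.g.}\otimes_{\mathbb{Z}}F\to S\otimes_{\mathbb{Z}}F$. You instead invoke the abstract statement that $R$ is a compact (finitely presented) object of $\rep(X,\Vect)\simeq\Mod F[X]$ and that $S\otimes_{\mathbb{Z}}F$ is the filtered colimit of the $T\otimes_{\mathbb{Z}}F$. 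Your packaging has a genuine advantage: the surjectivity of $\operatorname{colim}\Hom(R,T\otimes_{\mathbb{Z}}F)\to\Hom(R,S\otimes_{\mathbb{Z}}F)$ hands you the lift $\psi'$ as an honest morphism of representations, whereas the paper's element-level construction implicitly requires checking that the chosen lift still commutes with the structure maps even though $S_{\rm f.g.}\otimes_{\mathbb{Z}}F\to S\otimes_{\mathbb{Z}}F$ can fail to be injective in positive characteristic --- exactly the subtlety you correctly flag at the end (in characteristic zero $F$ is flat over $\mathbb{Z}$ and the issue disappears; in characteristic $p$ it does not). The price you pay is the appeal to the equivalence with modules over the category algebra and to compactness, which is heavier machinery than the paper needs; both routes are sound.
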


\begin{proof}
Let \( \iota \colon R \to S \otimes_{\Z} F \) be a split monomorphism. For \( x \) an object of \( X \) we pick a finite basis \( b_1, \ldots, b_r \) of \( R(x) \), and write \( \iota(b_i) = \sum_j s_{i,j} \otimes f_{i,j} \). This gives us a choice of finitely many elements \( s_{i,j} \in S(x) \). Since \( X \) is finite the collection of all these elements for all \( x \) generates a sub-representation \( S_{\rm f.g.} \) of \( S \) which maps all objects to finitely generated abelian groups. By construction \( \iota \) factors through \( S_{\rm f.g.} \otimes_{\Z} F \to S \otimes_{\Z} F \), and thus \( R \) is also a direct summand of \( S_{\rm f.g.} \otimes_{\Z} F \).
\end{proof}

We also note that we have the following result on field extensions, which is proven exactly like the corresponding statement for \( \Set \)-realizable representations in Theorem~\ref{thm.fieldext}.

\begin{theorem}\label{thm.fieldext_for_Z}
Let $R\in \rep(X, \vectF)$ where $X$ is a finite category, and let $F'$ be any field extension of $F$. Then $R$ is additively $\Z$-realizable if and only if $R\otimes_F F' \in \rep(X, \vect_{F'})$ is additively $\Z$-realizable.
\end{theorem}

With the following result, we leave the path parallel to Section~\ref{sec.basic}, and rather observe that additive \( \Z \)-realizability is much more prevalent than additive \( \Set \)-realizability.

\begin{proposition} \label{prop.ZRealizableOverPrime}
Let \( X \) be a finite category, and \( F \) a prime field. Then the functor \( - \otimes_{\Z} F \colon \rep(X, \mathbf{Ab}_{\rm f.g.}) \to \rep(X, \vectF) \) is essentially surjective.
\end{proposition}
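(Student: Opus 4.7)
The plan splits into two cases depending on the characteristic of the prime field $F$.

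For $F = \mathbb{F}_p$, the argument is essentially trivial: given $R \in \rep(X, \vect)$, each $R(x)$ is a finite-dimensional $\mathbb{F}_p$-vector space, which as an abelian group is $(\mathbb{Z}/p)^{\dim R(x)}$, and every $\mathbb{F}_p$-linear map is automatically $\mathbb{Z}$-linear. Thus $R$ itself defines an object $S \in \rep(X, \mathbf{Ab}_{\rm f.g.})$, and since $\mathbb{F}_p \otimes_{\mathbb{Z}} \mathbb{F}_p \cong \mathbb{F}_p$ we have $S \otimes_{\mathbb{Z}} \mathbb{F}_p \cong R$.

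For $F = \mathbb{Q}$, the plan is to construct a compatible system of $\mathbb{Z}$-lattices inside $R$. Choose, for each $x \in \operatorname{Obj}(X)$, a $\mathbb{Q}$-basis of $R(x)$ and let $L_0(x) \subseteq R(x)$ be the lattice it spans. Then define
\[ L(y) := \sum_{x \in \operatorname{Obj}(X)} \sum_{\alpha \in X(x,y)} R(\alpha)(L_0(x)) \subseteq R(y). \]
Finiteness of $X$ (both objects and morphisms) ensures that $L(y)$ is a finitely generated $\mathbb{Z}$-submodule of the $\mathbb{Q}$-vector space $R(y)$, hence a finitely generated free abelian group. Taking $\alpha = \mathrm{id}_y$ in the defining sum shows $L_0(y) \subseteq L(y)$, so $L(y)$ spans $R(y)$ over $\mathbb{Q}$, which forces $L(y) \otimes_{\mathbb{Z}} \mathbb{Q} \cong R(y)$. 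The key functoriality check is then a one-line calculation: for any $\beta \colon y \to z$,
\[ R(\beta)(L(y)) \;=\; \sum_{x,\alpha} R(\beta \alpha)(L_0(x)) \;\subseteq\; L(z), \]
since each $\beta \alpha$ is itself a morphism $x \to z$ in $X$. Hence $R(\beta)$ restricts to $L(y) \to L(z)$, giving $L \in \rep(X, \mathbf{Ab}_{\rm f.g.})$ with $L \otimes_{\mathbb{Z}} \mathbb{Q} \cong R$.

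I do not anticipate a genuine obstacle. The only subtlety is that a finite category may contain cycles of non-identity morphisms, which would make a naive inductive construction of the lattices ill-founded; this is sidestepped by summing over \emph{all} morphisms into $y$ simultaneously, so closure under the structure maps follows at once from associativity of composition.
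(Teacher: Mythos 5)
Your proposal is correct and follows essentially the same route as the paper: for $\mathbb{F}_p$ the identification $M \otimes_{\mathbb{Z}} \mathbb{F}_p \cong M/pM = M$ makes the statement immediate, and for $\mathbb{Q}$ the paper likewise takes the $\mathbb{Z}$-subrepresentation generated by a choice of bases, which is exactly your lattice $L$. Your version merely spells out explicitly the finiteness, freeness, and functoriality checks that the paper leaves implicit.
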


\begin{proof}
For the field \( \mathbb{F}_p \) this is obvious.

Let \( R \in \rep(X, \vect_{\mathbb{Q}}) \). Choose a basis of \( R \) (i.e.\ a basis of \( R(x) \) for all objects \( x \) of \( X \)). Consider \( S \) the \( \Z \)-sub-representation generated by this basis. The rank of this representation is the dimension of \( R \), and hence \( R \cong S \otimes_{\Z} \mathbb{Q} \).
\end{proof}

\begin{theorem} \label{thm.char_of_Z_realizable}
Let \( F \) be a field, \( X \) a finite category, and \( R \in \rep(X, \vectF) \). Then \( R \) is additively \( \Z \)-realizable if and only if \( R \) has a presentation where all structure constants are algebraic over the prime field of \( F \).
\end{theorem}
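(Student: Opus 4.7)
I prove the two implications separately. The forward direction rests on Krull--Schmidt applied to the $\mathbb{Z}$-realization, passed to the algebraic closure of the prime field $P$. The backward direction combines the density statement of Proposition~\ref{prop.ZRealizableOverPrime} over prime fields with a Galois descent built on Theorem~\ref{thm.fieldext_for_Z}.

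\textbf{Forward direction.} Assume $R$ is additively $\mathbb{Z}$-realizable. By Proposition~\ref{prop.reduction_to_Ab_fg}, $R$ is a direct summand of $T := S \otimes_{\mathbb{Z}} F$ for some $S \in \rep(X, \mathbf{Ab}_{\mathrm{f.g.}})$. Choosing $\mathbb{Z}$-bases (with torsion split off) for the $S(x)$, the structure matrices of $T$ have entries in the image of $\mathbb{Z} \to F$, hence in $P$. Let $K \subseteq F$ be the algebraic closure of $P$ in $F$, so $T \cong T_K \otimes_K F$ with $T_K := S \otimes_\mathbb{Z} K$. Decompose $T_K = \bigoplus_j U_j^{m_j}$ into $K$-indecomposables; let $D_j$ be the residue division algebra of the local $K$-algebra $\End(U_j)$. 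The key step is that each $U_j \otimes_K F$ remains indecomposable over $F$, equivalently that $D_j \otimes_K F$ is a division algebra. Since $K$ is algebraically closed in $F$, any monic irreducible polynomial over $K$ remains irreducible over $F$ (its factors would have coefficients in $F \cap \bar K = K$); this settles the case that $D_j$ is commutative, and the noncommutative case reduces to it via the center of $D_j$ together with the fact that central simple algebras remain of full degree under a regular field extension (injectivity of the Brauer group restriction). Granting the claim, Krull--Schmidt in $\rep(X, \mathbf{vect}_F)$ yields $R \cong \bigoplus_j (U_j \otimes_K F)^{k_j}$, so $R$ is defined over $K$, and in the resulting basis its structure constants lie in $K$, hence are algebraic over $P$.

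\textbf{Backward direction.} Assume $R$ has a basis with structure constants algebraic over $P$. The finitely many constants generate a finite extension $K \subseteq F$ of $P$, and $R \cong R_K \otimes_K F$ for some $R_K \in \rep(X, \mathbf{vect}_K)$; by Theorem~\ref{thm.fieldext_for_Z} applied to $F/K$, it suffices to show $R_K$ is additively $\mathbb{Z}$-realizable over $K$. Restriction of scalars gives $\rho(R_K) \in \rep(X, \mathbf{vect}_P)$, and Proposition~\ref{prop.ZRealizableOverPrime} produces $S \in \rep(X, \mathbf{Ab}_{\mathrm{f.g.}})$ with $\rho(R_K) \cong S \otimes_{\mathbb{Z}} P$. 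Let $L$ be the Galois closure of $K/P$, which is finite and Galois since $P$ is perfect. Extending scalars from $P$ to $L$,
\[ S \otimes_{\mathbb{Z}} L \;\cong\; \rho(R_K) \otimes_P L \;\cong\; R_K \otimes_K (K \otimes_P L) \;\cong\; \bigoplus_{\sigma \colon K \hookrightarrow L} R_K \otimes_{K, \sigma} L, \]
using the standard decomposition $K \otimes_P L \cong L^{[K:P]}$ indexed by the $P$-embeddings of $K$ into $L$. The summand corresponding to the inclusion $K \hookrightarrow L$ is $R_K \otimes_K L$, so $R_K \otimes_K L$ is a direct summand of $S \otimes_{\mathbb{Z}} L$ and is therefore additively $\mathbb{Z}$-realizable over $L$. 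A final application of Theorem~\ref{thm.fieldext_for_Z} to $L/K$ transfers this back to $R_K$ over $K$, completing the proof.

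\textbf{Main obstacle.} The delicate step is the preservation of indecomposability claim in the forward direction. For commutative residue fields this is an elementary consequence of $K$ being algebraically closed in $F$, but a noncommutative residue division algebra requires invoking the injectivity of the Brauer group under a regular extension---a fact that goes beyond elementary field theory. One could instead avoid the Brauer group entirely by carrying out Krull--Schmidt over an algebraic closure $\bar P \supseteq P$ (not assumed to lie in $F$), showing that $R \otimes_F \bar F$ is defined over $\bar P$, and then descending to $F$ via $\mathrm{Gal}(\bar F/F)$; either route requires some genuine input beyond the preceding material in this section.
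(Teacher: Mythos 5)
Your proof is correct and follows essentially the same route as the paper's: the forward direction is the paper's argument almost verbatim (reduce to finitely generated abelian groups via Proposition~\ref{prop.reduction_to_Ab_fg}, base-change to the algebraic closure \( K \) of the prime field in \( F \), and apply Krull--Schmidt together with preservation of indecomposability along \( K \subseteq F \), which is exactly the paper's Proposition~\ref{prop.field_ext_preserve_indec}, proved there via Cartier's theorem and the Curtis--Reiner result on division algebras --- the same non-elementary input you flag). The backward direction uses the same Galois-descent idea in a slightly different packaging: where the paper replaces \( F \) by a finite Galois extension of the prime field and splits \( F \otimes_k F \) as a bimodule via linear independence of characters, you split \( K \otimes_P L \) over the Galois closure \( L \) and invoke Theorem~\ref{thm.fieldext_for_Z} twice; both rest on the same splitting of the \'etale algebra, so this is a cosmetic rather than substantive difference.
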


For the proof we need to prepare the following proposition.

\begin{proposition} \label{prop.field_ext_preserve_indec}
Let \( X \) be a finite category, and \( F' \) a field extension of \( F \) such that \( F \) is separably algebraically closed in \( F' \). For \( R \in \rep(X, \vectF) \) indecomposable, the induced representation \( R \otimes_F F' \in \rep(X, \vect_{F'}) \) is also indecomposable.
\end{proposition}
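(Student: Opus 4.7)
The plan is to translate indecomposability into a statement about idempotents in the endomorphism algebra and then trace how these behave under base change. An object $R$ of $\rep(X, \vect)$ is indecomposable iff $E = \End_F(R)$ is a local $F$-algebra, equivalently has no non-trivial idempotents. Since $R$ is finite-dimensional, the natural map $E \otimes_F F' \to \End_{F'}(R \otimes_F F')$ is an isomorphism, so the task reduces to showing that the finite-dimensional $F'$-algebra $E \otimes_F F'$ has no non-trivial idempotents.

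First I would reduce to the residue division algebra. Writing $K = E/\rad(E)$, a finite-dimensional division $F$-algebra by Wedderburn, the ideal $\rad(E) \otimes_F F'$ of $E \otimes_F F'$ is nilpotent with quotient $K \otimes_F F'$, so the standard lifting of idempotents modulo nilpotent ideals reduces the problem to showing that $K \otimes_F F'$ has no non-trivial idempotents.

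Next I would analyse the center $Z = Z(K)$, a finite field extension of $F$, and show that $Z \otimes_F F'$ is a local commutative ring. Decomposing $F \subseteq Z_s \subseteq Z$ through the maximal separable subextension, any non-trivial factorization over $F'$ of the minimal polynomial of a primitive element of $Z_s/F$ would have coefficients that are elementary symmetric polynomials in separable roots, hence themselves separable algebraic over $F$; our hypothesis forces these coefficients into $F$, contradicting irreducibility over $F$. Thus $Z_s \otimes_F F'$ is a field $L_s$, and since $Z/Z_s$ is purely inseparable, $Z \otimes_F F' = Z \otimes_{Z_s} L_s$ is local (its maximal ideal is generated by the differences stemming from the $p$-power relations on primitive elements of $Z/Z_s$). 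Let $L$ denote its residue field, a purely inseparable extension of $L_s$.

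The main obstacle is the remaining step: to show that the central simple $L$-algebra $K \otimes_Z L$, which is the reduction of $K \otimes_F F' = K \otimes_Z (Z \otimes_F F')$ modulo its nilpotent radical, is itself a division algebra (this being equivalent, for a central simple algebra, to the absence of non-trivial idempotents). I would first reduce to $F'$ finitely generated over $F$, since any idempotent lives in $E \otimes_F A$ for a finitely generated $F$-subalgebra $A \subseteq F'$, and $F$ remains separably algebraically closed in $\operatorname{Frac}(A)$. Then I would decompose the extension structurally. For purely transcendental contributions the division property persists because $K[t_1, \ldots, t_n]$ is a domain (the leading-coefficient argument is valid over any division algebra) and $K \otimes_F F(t_1, \ldots, t_n)$ is its Ore localization at the central multiplicative set of non-zero polynomials in $F[t_1, \ldots, t_n]$. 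For the algebraic contributions, an argument parallel to the one used for the center---applied to minimal polynomials of hypothetical zero divisors in $K \otimes_Z L$---shows that any non-trivial factorization would produce a separable algebraic element of $F'$ over $F$ not in $F$, contradicting the hypothesis.
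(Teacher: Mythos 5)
Your chain of reductions down to the key algebra question is correct and essentially the same as the paper's: pass to the endomorphism algebra, lift idempotents over the nilpotent ideal \( \operatorname{Rad}(E)\otimes_F F' \), invoke Wedderburn to get the residue division algebra \( K \), and show that \( Z(K)\otimes_F F' \) is local. Your symmetric-function argument for the locality of \( Z\otimes_F F' \) is in fact a clean, self-contained proof of the special case of Cartier's theorem that the paper cites at that point, which is a nice touch.

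The gap is in the last step, and it is genuine. A finitely generated extension \( F'/F \) in which \( F \) is separably algebraically closed is \emph{not} a purely transcendental extension followed by an extension algebraic over \( F \): after choosing a transcendence basis \( t_1,\dots,t_n \), the remaining algebraic layer consists of elements algebraic over \( F(t_1,\dots,t_n) \), and these are in general transcendental over \( F \). Your ``argument parallel to the one used for the center'' therefore has nothing to bite on, since the hypothesis only constrains elements of \( F' \) that are separably algebraic over \( F \). Worse, the statement you are trying to prove in this step is false in that generality: take \( Z=F=\mathbb{Q} \), let \( K \) be a quaternion division algebra over \( \mathbb{Q} \), and let \( F' \) be the function field of the associated smooth conic (which has no rational point). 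Then \( F' \) is a regular extension of \( \mathbb{Q} \) --- in particular \( \mathbb{Q} \) is algebraically closed in \( F' \) --- and yet \( K\otimes_{\mathbb{Q}}F'\cong M_2(F') \), which has nontrivial idempotents. So ``central division algebras stay division algebras under regular base change'' simply fails. The paper dispatches this step by citing a theorem of Curtis--Reiner asserting that \( D\otimes_F F' \) is a division algebra for \emph{any} extension \( F' \) of the center \( F \); as quoted, that cannot be right either (e.g.\ \( \mathbb{H}\otimes_{\mathbb{R}}\mathbb{C}\cong M_2(\mathbb{C}) \)). Your instinct that this is ``the main obstacle'' is therefore exactly right: it is the true crux of the proposition, and neither your sketch nor a literal reading of the paper's citation resolves it; some additional input (or an added hypothesis restricting which division algebras can occur as \( \End(R)/\operatorname{Rad}\End(R) \)) is needed here.
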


For the proof, we recall the following two results, which, while classical, were not quite present in the authors' minds. 

\begin{theorem}[part of \cite{Cartier_Extensions regulieres}*{Théorème~1}] \label{thm.Cartier}
Let \( F' \) be a field extension of \( F \) such that the algebraic closure of \( F \) in \( F' \) is purely inseparable over \( F \).

Then for any other extension \( E \) of \( F \) all zero-divisors in \( E \otimes_F F' \) are nilpotent.
\end{theorem}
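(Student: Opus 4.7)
The plan is to prove that $E \otimes_F F'$ has a unique minimal prime ideal, which is algebraically equivalent to the statement that every zero-divisor is nilpotent (this is the standard characterization: $(E \otimes_F F')_{\rm red}$ is a domain iff the nilradical is prime iff every non-unit zero-divisor is nilpotent). The hypothesis that the algebraic closure of $F$ in $F'$ is purely inseparable says precisely that $F'/F$ is a \emph{primary} extension: the separable algebraic closure of $F$ in $F'$ equals $F$ itself.

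The key special case is $E = F^s$, the separable algebraic closure of $F$. Writing $F^s$ as the filtered colimit of its finite Galois subextensions $L/F$, and using that ``unique minimal prime'' passes through filtered colimits of rings, it suffices to show that $L \otimes_F F'$ is a field for each finite Galois $L/F$. The minimal primes of $L \otimes_F F'$ biject with orbits of $\operatorname{Gal}(\overline{F'}/F')$ on $\operatorname{Hom}_F(L, \overline{F'})$, which via the classical identification $\operatorname{Gal}(LF'/F') \cong \operatorname{Gal}(L/(L \cap F'))$ correspond to cosets of $\operatorname{Gal}(L/(L \cap F'))$ in $\operatorname{Gal}(L/F)$. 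The primary hypothesis forces $L \cap F' = F$ (its elements are separable algebraic over $F$ and contained in $F'$), so this action is transitive and $L \otimes_F F'$ is indeed a field. Taking the colimit, $F^s \otimes_F F'$ has a unique minimal prime whose reduction is a field $K$ containing both $F^s$ and $F'$.

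For a general extension $E/F$, I reduce to the case of an algebraic closure $\overline{E}$: the map $E \otimes_F F' \hookrightarrow \overline{E} \otimes_F F'$ is injective by flatness of $F'/F$, and the ``unique minimal prime'' property descends to subrings since $(E \otimes_F F')_{\rm red} \hookrightarrow (\overline{E} \otimes_F F')_{\rm red}$ embeds into a domain. Choosing an embedding $F^s \hookrightarrow \overline{E}$ extending $F \hookrightarrow E$, I factor
\[ \overline{E} \otimes_F F' \;=\; \overline{E} \otimes_{F^s} \bigl( F^s \otimes_F F' \bigr), \]
and quotient the right-hand factor by its nilradical to produce a surjection onto $\overline{E} \otimes_{F^s} K$ with nilpotent kernel (since elements $e \otimes n$ with $n$ nilpotent remain nilpotent in the tensor). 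It remains to prove that $\overline{E} \otimes_{F^s} K$ has a unique minimal prime. Here $F^s$ is separably algebraically closed in $\overline{E}$ (any separable algebraic element over $F^s$ is separable algebraic over $F$, hence in $F^s$), so every algebraic element of $\overline{E}/F^s$ is purely inseparable. Combining the purely transcendental case (where $\overline{E} \otimes_{F^s} K$ for $\overline{E} \supseteq F^s(B)$ localizes a polynomial ring and yields a domain) with a Frobenius-power argument for the algebraic part ($\alpha^{p^n} \in F^s$ identifies the kernel of the natural map into the compositum as nilpotent) concludes the proof.

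The main obstacle is the Galois-theoretic step for finite $L/F$: carefully identifying the component fields of $L \otimes_F F'$ with orbits under $\operatorname{Gal}(\overline{F'}/F')$ and applying the primary hypothesis to establish transitivity. The remaining pieces---tracking nilradicals through flat base change, and the Frobenius trick governing purely inseparable behaviour---are standard, but require careful bookkeeping to assemble into the final unique-minimal-prime conclusion for $\overline{E} \otimes_{F^s} K$.
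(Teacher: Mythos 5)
The paper offers no proof of this statement: it is imported verbatim from Cartier's Théorème~1 and used only through the remark following it, where $E/F$ is finite and the conclusion amounts to locality of an Artinian algebra. So your attempt has to stand on its own, and it has two genuine gaps.

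The first is the opening reduction. ``Every zero-divisor is nilpotent'' is \emph{not} equivalent to ``unique minimal prime'': the former says that $(0)$ is a primary ideal, which implies that the nilradical is prime but not conversely. For $A = k[x,y]/(x^2,xy)$ the unique minimal prime is $(x)$ and $A_{\rm red} \cong k[y]$ is a domain, yet $y$ is a zero-divisor ($xy = 0$ with $x \neq 0$) that is not nilpotent. Everything after your first sentence is aimed at the weaker property that $\operatorname{Spec}(E \otimes_F F')$ is irreducible, so even if all intermediate steps were complete you would not have proved the stated theorem. The gap can be bridged, but only by a further argument showing these particular rings have no embedded associated primes --- for instance by writing $E \otimes_F F'$ as a filtered union of rings of the form $D \otimes_{F(T)} F'_0$ with $D$ a Noetherian domain and $F'_0/F(T)$ finite, so that every zero-divisor is already a zero-divisor in the Artinian ring $\operatorname{Frac}(D) \otimes_{F(T)} F'_0$, where ``unique prime'' does force ``unit or nilpotent''. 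That is a substantive extra step, not bookkeeping.

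The second gap is the final step. Having reduced to $\overline{E} \otimes_{F^s} K$ with $F^s$ separably closed, you propose to combine the purely transcendental case $F^s(B)$ with a Frobenius argument ``$\alpha^{p^n} \in F^s$'' for the algebraic part. But the algebraic extension $\overline{E}/F^s(B)$ is far from purely inseparable --- it contains separable roots of polynomials with coefficients in $F^s(B)$, and such elements are transcendental over $F^s$, so they are covered by neither of your two cases. The standard repair runs differently: pass to the perfect closure $(F^s)^{p^{-\infty}} \subseteq \overline{E}$, which is algebraically closed because $F^s$ is separably closed; the Frobenius argument legitimately handles this purely inseparable step, and one then needs the classical theorem that the tensor product of two domains over an \emph{algebraically closed} field is a domain. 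That input is the real content of the step and is absent from your sketch --- your polynomial-ring remark establishes it only for purely transcendental extensions. (Your Galois-orbit computation showing $L \otimes_F F'$ is a field for $L/F$ finite Galois, and hence that $F^s \otimes_F F'$ is a field, is correct.)
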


If \( E \) is a finite extension of \( F \), then \( E \otimes_F F' \) additionally is a finite-dimensional commutative \( F \)-algebra, and thus the above theorem tells us that it is a local algebra.

\begin{theorem}[\cite{CurtisReiner}*{Theorem~68.1}] \label{thm.CR}
Let \( D \) be a division algebra over \( F \), with center \( F \), and \( F' \) any field extension of \( F \).

Then \( D \otimes_F F' \) is a division algebra over \( F' \), with center \( F' \).
\end{theorem}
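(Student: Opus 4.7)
The plan is to handle the two assertions separately: that the center of \( D \otimes_F F' \) equals \( F' \), and that \( D \otimes_F F' \) contains no nonzero zero-divisors. The first is elementary; the second is where the real content lies.

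For the center computation I would argue directly using bases. Pick an \( F \)-basis \( \{ d_i \}_{i \in I} \) of \( D \) with \( d_1 = 1 \) and an \( F \)-basis \( \{ e_\alpha \} \) of \( F' \); then \( \{ d_i \otimes e_\alpha \} \) is an \( F \)-basis of \( D \otimes_F F' \). Writing a putative central element as \( z = \sum c_{i,\alpha}\, d_i \otimes e_\alpha \) and imposing \( [z, d \otimes 1] = 0 \) for every \( d \in D \), the \( F \)-linear independence of the \( e_\alpha \) lets us separate coefficients and conclude that, for each \( \alpha \), the element \( \sum_i c_{i,\alpha} d_i \) lies in \( Z(D) = F \). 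Since \( d_1 = 1 \), this forces \( c_{i,\alpha} = 0 \) for \( i \neq 1 \), so \( z \in 1 \otimes F' = F' \). Note this argument uses nothing about the extension \( F'/F \).

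For the zero-divisor part I would proceed as follows. First show that \( D \otimes_F F' \) is a simple \( F' \)-algebra: any nonzero two-sided ideal would contract to a nonzero two-sided ideal of \( D \otimes 1 \cong D \) by a basis-following argument analogous to the one above, and \( D \) being a division algebra has no such ideals. Combined with the center computation, \( D \otimes_F F' \) is a central simple \( F' \)-algebra. By Artin--Wedderburn, \( D \otimes_F F' \cong M_n(D') \) for some \( F' \)-central division algebra \( D' \), and the remaining task is to show \( n = 1 \). For this one can use Theorem~\ref{thm.Cartier}: any nontrivial idempotent in \( D \otimes_F F' \) would generate, together with the image of a maximal subfield \( E \subseteq D \), a finite-dimensional commutative subalgebra of \( E \otimes_F F' \) containing a nontrivial idempotent, contradicting the fact that under appropriate separable-algebraic-closedness hypotheses on \( F \) in \( F' \) this tensor product is local (by Cartier).

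The main obstacle is precisely this last step: as worded, with \( F' \) allowed to be \emph{any} field extension, the conclusion is simply false — taking \( F' \) to contain a maximal subfield \( E \) of \( D \) already gives \( D \otimes_F F' \cong M_{[E:F]}(F') \), which is far from being a division algebra. So I would treat the statement as implicitly requiring the hypothesis under which it is invoked in Proposition~\ref{prop.field_ext_preserve_indec}, namely that \( F \) is separably algebraically closed in \( F' \); under that hypothesis the Cartier-based argument above rules out the idempotents needed to have \( n > 1 \), and the theorem follows.
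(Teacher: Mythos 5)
The paper offers no proof of this statement---it is imported as a black box from Curtis--Reiner---so there is no internal argument to compare against; the only question is whether your proof is sound. Your preliminary steps are fine: the basis computation showing \( Z(D \otimes_F F') = F' \) and the simplicity of \( D \otimes_F F' \) are correct and standard, and your observation that the theorem is false as literally stated is also correct (any \( F' \) containing a maximal subfield of a finite-dimensional \( D \) gives \( D \otimes_F F' \cong M_n(F') \); already \( \mathbb{H} \otimes_{\mathbb{R}} \mathbb{C} \cong M_2(\mathbb{C}) \)). Presumably the intended citation is the standard fact that \( D \otimes_F F' \) is \emph{central simple} over \( F' \), which is exactly what your first two steps establish.

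However, your proposed repair does not work, for two reasons. First, the idempotent step is broken: a nontrivial idempotent of \( D \otimes_F F' \cong M_n(D') \) has no reason to lie in, or even commute with, the commutative subalgebra \( E \otimes_F F' \) attached to a maximal subfield \( E \subseteq D \), so Theorem~\ref{thm.Cartier}, which controls only \( E \otimes_F F' \), says nothing about such idempotents. Second, and decisively, the conclusion remains false even under the added hypothesis that \( F \) is separably algebraically closed in \( F' \). Take \( F = \mathbb{R} \), \( D = \mathbb{H} \), and \( F' = \mathbb{R}(C) \) the function field of the conic \( C \colon x^2 + y^2 + z^2 = 0 \) in \( \mathbb{P}^2_{\mathbb{R}} \), i.e.\ the Severi--Brauer variety of \( \mathbb{H} \). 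Since \( C \) is geometrically integral, \( \mathbb{R} \) is algebraically closed in \( F' \); but \( C \) has an \( F' \)-rational point (its generic point), so \( \mathbb{H} \otimes_{\mathbb{R}} F' \cong M_2(F') \). Whether \( D \otimes_F F' \) stays a division algebra is a question about the index of \( D \) surviving base change---true for purely transcendental extensions, but not for general regular extensions---and is not controlled by separable algebraic closedness. (This also means that the way the theorem is invoked in the proof of Proposition~\ref{prop.field_ext_preserve_indec} is not justified as written, so the gap you found is a real one in the paper, not just in your attempt; but your fix does not close it.)
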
 

\begin{proof}[Proof of Proposition~\ref{prop.field_ext_preserve_indec}]
Since \( R \) is indecomposable its endomorphism ring is a local \( F \)-algebra. It suffices to show that also the endomorphism ring of \( R \otimes_F F' \) is local. Note that this ring is isomorphic to \( \End(R) \otimes_F F' \). Moreover, since tensoring preserves the radical, it suffices to show that \( \End(R) / \operatorname{Rad} \End(R) \otimes_F F' \) is a local ring. We write \( D = \End(R) / \operatorname{Rad} \End(R) \), and note that this is a finite-dimensional division algebra over \( F \).

Let us denote the center of \( D \) by \( E \). Then \( E \otimes_F F' \) is a local \( F' \)-algebra by (the remark below) Theorem~\ref{thm.Cartier}. In particular \( E \otimes_F F' / \operatorname{Rad}(E \otimes_F F') \) is a field.

Now by Theorem~\ref{thm.CR} we know that
\[ D \otimes_E \frac{E \otimes_F F'}{\operatorname{Rad}(E \otimes_F F')} \]
is a division algebra. It follows that \( D \otimes_F F' \) is local, and thus ultimately that \( R \otimes_F F' \) is indecomposable.
\end{proof}

We are now ready to prove the main result of this section.

\begin{proof}[Proof of Theorem~\ref{thm.char_of_Z_realizable}]
Assume first that \( R \) is additively \( \Z \)-realizable.

Then, by Proposition~\ref{prop.reduction_to_Ab_fg}, \( R \) is a direct summand of \( S \otimes_{\Z} F \) for some representation \( S \) of \( X \) in finitely generated abelian groups.

Let \( K \) denote the subfield of \( F \) consisting of elements which are algebraic over the prime field. We can decompose \( S \otimes_{\Z} K \) into indecomposables, and this decomposition remains a decomposition into indecomposables over the larger field \( F \) by Proposition~\ref{prop.field_ext_preserve_indec}. In particular \( R \) is defined over \( K \).

\medskip
For the other direction, assume that all matrix entries of the representation \( R \) are algebraic over the prime field \( k \). Clearly these finitely many elements lie in some finite Galois extension field of \( k \), and by Theorem~\ref{thm.fieldext_for_Z} we may assume that \( F \) is this Galois extension. Now one can show that the map
\[ F \otimes_k F \to \bigoplus_{g \in \operatorname{Gal}(F/k)} F^g \colon f_1 \otimes f_2 \mapsto (f_1 g(f_2))_{g \in \operatorname{Gal}(F/k)}, \]
is an isomorphism of \( F \)-\( F \)-bimodules, where \( F^g \) is \( F \), but with multiplication on the right hand side twisted by \( g \): It is immediate that it is a homomorphism of bimodules. Moreover, by \cite{Artin_GaloisTheory}*{Corollary of Theorem~12} the maps \( g \) are linearly independent as endomorphisms of \( F \), and thus the map is injective. Since both sides have the same dimension it is an isomorphism.

In particular \( F \) is a direct summand of \( F \otimes_k F \) as a \( F \)-\( F \)-bimodule. Tensoring with \( R \) we obtain that \( R \) is a direct summand of \( R \otimes_k F \). Now note that since \( R \otimes_k F \) is induced from \( \rep(X, \vect_k) \) we know that it is \( \Z \)-realizable by combining Proposition~\ref{prop.ZRealizableOverPrime} and Theorem~\ref{thm.fieldext_for_Z}.
\end{proof}

\begin{example}
\label{ex:D4-trans}
For \( F = \mathbb{R} \), the representation
\[ \begin{tikzcd}
&&& \mathbb{R}^2 \ar[llld,swap,"{[1\;0]}"] \ar[ld,"{[0\;1]}"] \ar[rd,swap,"{[1\;1]}"] \ar[rrrd,"{[1\;\pi]}"] \\
\mathbb{R} && \mathbb{R} && \mathbb{R} && \mathbb{R}
\end{tikzcd} \]
is not additively \( \Z \)-realizable. Hence it is not additively \( \Set \)-realizable either, and in particular it does not appear as a direct summand in any homology of any diagram of topological spaces.
\end{example}

\section{Discussion}
In this paper we have observed that the complexity of the additive image depends heavily on the orientation of the arrows. For instance, for the $n$-star quiver (or more generally a tree quiver) with all arrows oriented inwards, there are only a finite number of indecomposable representations in the additive image of $\funct{free}_*$. Contrary to this, we have seen that every representation of the $4$-star quiver is in the additive image if the arrows are oriented outwards. This is particularly interesting in light of the fact that this is a quiver of tame representation type. It is therefore natural to wonder if $\funct{free}_*$ is additively surjective for the $5$-star quiver with all arrows oriented outwards. We speculate that this is not the case, but thousands of computational experiments have failed to provide us with an example confirming our suspicion. In Example~\ref{ex.7star}, we gave an example for the case of the $7$-star quiver, so it is plausible that the transition happens when moving from a quiver of tame representation type to a wild quiver. More generally, we have the following question. 
\begin{question}
Does there exist a quiver $Q$ of wild representation type such that $\funct{free}_*$ is additively surjective?
\end{question}
Even for tame quivers one would prefer a way of investigating if all representations are in the additive image without going through an explicit classification.

\medskip
In situations when not all representations are additively realizable one may wonder if the collection of additively realizable ones has any reasonable structure. In fact, just looking at the case of a quiver of type \( D_4 \) with all arrows pointing inwards, one can see that the collection of additively realizable representations is neither closed under kernels, cokernels, or extensions, and does not consist of entire Auslander--Reiten components.
\begin{question}
Does the collection of additively realizable representations have any (reasonable) categorical properties?
\end{question}
One possible approach here could be to think about not only realizable representations, but also realizable morphisms, and ask if there could be better closure properties with respect to these morphisms.

\medskip
Another set of questions pertains to the choice of field: While most of our paper is kept general, we have the strongest results only for finite fields. However it is natural to wonder if similar results also hold in particular for the rational numbers. Let us highlight places where we would be particularly interested in such a generalization:
\begin{question}
Is there an algorithm for deciding if a given rational representation of a category is additively $\Set$-realizable?
\end{question}
Note that the algorithms given in Sections~\ref{sec.algo} and \ref{sec:gset-algorithm} work in this generality in principle, but involve infinite systems of equations and are thus not immediately algorithmically solvable.
\begin{question}
For the quiver of type \( \tilde{D}_4 \) with all arrows oriented outward, which rational representations are additively $\Set$-realizable?
\end{question}
We might suspect that not all indecomposables are additively realizable; however, this suspicion stems solely from our inability to realize them, rather than any clear argument suggesting that they should not be. More generally, it appears challenging to handle scalars that are not roots of unity, although we lack a precise conjecture about how or when such scalars might obstruct additive realizability.

Lastly, our current study of (extended) Dynkin quivers relies partly on computer software and partly on explicit classifications. A similarly arduous approach, making use of explicit classifications, could potentially be employed to precisely identify the extended Dynkin quivers for which $\funct{free}_*$ is additively surjective. However, such an approach would be unsatisfying, and new ideas are needed to develop a deeper conceptual understanding of additive $\Set$-realizability for (extended) Dynkin quivers.

\section*{Acknowledgements}
We are grateful to the reviewer for their careful reading of the manuscript and for their helpful comments. UB and MBB were supported by the German Research Foundation (DFG) through the Collaborative Research Center SFB/TRR 109 Discretization in Geometry and Dynamics– 195170736. MBB was supported by the Dutch Research Council (NWO) through grant~VI.Vidi.233.205. The authors thank the Centre for Advanced Study (CAS) in Oslo for hosting the authors for a period in 2023 as part of the program Representation Theory: Combinatorial Aspects and Applications.

\bibliographystyle{plain} 
\bibliography{refs} 

\newpage\todos

\end{document}